  \def\wi{}%
  \def\({}%
  \def\){}%
\newcommand{\Forall}{\:\forall\:}
\newcommand{\Foreach}{\quad\Forall}
\begin{document}
\title{Discretisation of an Oldroyd-B viscoelastic fluid flow using a Lie derivative formulation}
\date{}
\author{Ben S. Ashby}

\author{Tristan Pryer}
\affil{Institute for Mathematical Innovation,\\ 
University of Bath, Bath, UK.}
\affil{Department of Mathematical Sciences, \\ University of Bath, Bath, UK.}
\maketitle

\begin{abstract}
  In this article we present a numerical method for the Stokes flow of an
  Oldroyd-B fluid. The viscoelastic stress evolves according to a constitutive
  law formulated in terms of the upper convected time derivative. A finite
  difference method is used to discretise along fluid trajectories to
  approximate the advection and deformation terms of the upper convected
  derivative in a simple, cheap and cohesive manner, as well as ensuring that
  the discrete conformation tensor is positive definite. A full implementation
  with coupling to the fluid flow is presented, along with detailed discussion
  of the issues that arise with such schemes. We demonstrate the performance of
  this method with detailed numerical experiments in a lid-driven cavity setup.
  Numerical results are benchmarked against published data, and the method is
  shown to perform well in this challenging case.
\end{abstract}

\section{Introduction}

Viscoelastic fluids produce a number of phenomena distinct from the
familiar behaviour of Newtonian fluids. A broad class of such fluids
is that of polymer solutions, for which these phenomena are the result
of long chain molecules in the fluid that are large enough to have a
meaningful effect on the fluid flow. Such fluids are found in industry
in the form of molten plastics or machine lubricants, as well as
several biological fluids such as blood. Polymeric fluids are modelled
by augmenting the usual fluid flow equations for conservation of mass
and momentum with a constitutive law for the polymeric stress, which
results in a system of either partial differential or
integro-differential equations. This article will focus on the former,
with the constitutive law given by a hyperbolic partial differential equation (PDE).

In Newtonian fluid dynamics, the \emph{Reynolds number} indicates
the relative scaling of inertial terms to viscous terms in the Navier-Stokes
equations, and can be used to predict flow behaviour and transition between different regimes such as laminar and turbulent flow. 
In viscoelastic flows, there are two dimensionless groups which are used to
predict flow behaviour, the \emph{Weissenberg number}, $\wi$ and the
\emph{Deborah number}, $\operatorname{De}$. The Weissenberg number quantifies
the effect of nonlinearities that arise due to non-Newtonian normal stress
differences \cite{dealy2010weissenberg,pakdel1997cavity}. The Deborah number has
been interpreted as the ratio of the magnitudes of elastic and viscous forces
\cite{bird1987dynamics}. In the limit as $\operatorname{De} \to 0$,
Newtonian flow behaviour is recovered. Experimental results for dilute polymer solutions
reported in \cite{pakdel1998cavity} suggest that a transition to turbulent flow behaviour may occur
when elastic effects become dominant over viscous, an effect known as elastic
instability, even for small Reynolds number. In complex flow geometries and
time-dependent flows, it may be difficult to define these dimensionless groups
appropriately, and indeed one may be more relevant than the other. However, in
keeping with much of the literature on numerical methods for non-Newtonian
flows, we will mainly consider the Weissenberg number $\wi$, defined as 
\begin{equation*}
  \wi
  :=
  \frac{\lambda U}{L},
\end{equation*}
where $\lambda$ is the relaxation time and $U$ and $L$ are characteristic
velocity and length scales of the problem.

Numerical methods posed for the solution of viscoelastic fluid
problems, especially using the Oldroyd-B model, have been observed to
suffer from the so-called high Weissenberg number problem, where
numerical simulations fail to converge at moderate values of the
Weissenberg number. There are several potential causes of the high
Weissenberg number problem such as the fact that many viscoelastic
fluid flow problems exhibit strong, spatially exponential layers which
require fine mesh resolution to adequately resolve. In
\cite{pan2009simulation}, the authors observe a relationship between
the Weissenberg number and the principle eigenvalue, determined by
numerical simulation and curve fitting. They find that this maximum
value increases exponentially with respect to $\wi$. It is therefore a
significant computational cost to resolve this layer for larger
Weissenberg number. It has also been suggested that the Oldroyd-B
model itself is insufficient to properly describe viscoelastic
behaviour. Indeed, extensional viscosity of the Oldroyd-B model blows
up at a finite strain rate \cite{hinch2021oldroyd}. Despite this, the
Oldroyd-B model remains a popular test case for numerical
modelling. This point is discussed in greater detail in
\S\ref{sec:oldroyd_b_model}. 

It is also well known that for the
Oldroyd-B model, if the initial data for the conformation tensor is
positive definite, then the conformation tensor remains so for all
time \cite{dupret1986signe, renardy2021mathematician}. Loss of
positive definiteness at the discrete level is often a precursor to
numerical blowup, and indeed it was shown in
\cite{rutkevich1970propagation} that the Upper-convected Maxwell model
(which is the model that results if the solvent contribution is
neglected in the Oldroyd-B model) becomes ill-posed. It therefore
appears that designing schemes which preserve the positive
definiteness of the discrete conformation tensor is important not only
to maintain the physical structures of the model, but also crucial for
numerical stability. 

These phenomena motivated what seems to
be the dominant method for solving viscoelastic constitutive laws in
differential form: the log-conformation representation, introduced in \cite{fattal2004constitutive}. In this
approach, a differential equation is derived for the tensor logarithm
of the (positive definite) conformation tensor, thus guaranteeing that
the numerical approximation to the conformation tensor will remain
positive definite. It is argued that this method is more appropriate
for resolving exponential spatial layers, and indeed has provided
numerical simulations with relatively large Weissenberg numbers
\cite{fattal2005time,hulsen2005flow} which are stable, though
questions of accuracy remain. It is however suggested by the results
of \cite{zhang2021comparative} that careful application of the method
is required to preserve physical characteristics of the model. In
particular, they report reduced turbulent drag reduction. In addition,
solving for the logarithm of the conformation tensor introduces
additional nonlinearities into the numerical model that are not
present in the physical model, increasing computational expense. Other schemes which preserve positivity have been derived using for example the square root of the conformation tensor \cite{lozinski2003energy} and the use of the hyperbolic tangent transformation \cite{jafari2018property}.

Schemes based on a Lie derivative formulation offer a potential solution to the
issues described above. While characteristic schemes are motivated by the idea
of discretising the material derivative as a single object after expressing it
as a rate along fluid trajectories, the upper-convected derivative is similarly
expressible as a rate along fluid paths, this time including information on
deformation of the moving fluid. Indeed, when expressed in a convected
coordinate system, the Oldroyd-B constitutive law is a tensor-valued ordinary
differential equation. The method presented in this work can be thought of as
resulting from an approximation of a rate of change of stress that also
incorporates rate of change of the convected coordinate system used to express
it. Discretising the Oldroyd-B constitutive law with an appropriately chosen
finite difference methods provides a natural way to preserve positive
definiteness of the conformation tensor. To our knowledge, the idea of
conducting computations for viscoelastic fluids by discretising the upper
convected derivative expressed as a Lie derivative was first proposed in
\cite{petera2002new}. Theory was developed in \cite{lee2006new,lee2011stable}
for a finite element method in the framework of Ricatti equations. A finite
difference scheme for the Oldroyd-B constitutive law with prescribed velocity
field was presented with truncation analysis in \cite{medeiros2021second}. These
methods can be viewed as an extension of characteristic schemes, which are well
established in fluid dynamics due to their favourable stability properties, see
\cite{pironneau1982transport} for an early reference, with analysis of a finite
element method given in \cite{suli1988convergence}. For example, when applied to
the transport equation, stable solutions are obtained even for large CFL
numbers. These methods have also been used to discretise the advective part of
the upper convected derivative in computations involving viscoelastic fluids
\cite{boyaval2009free}.

The discretisation of this problem in the Lie derivative framework has distinct
advantages and unique challenges. One of the goals of the paper is to compare
these features with other common approaches, and to test the limits of Lie
derivative discretisation methods. We present a scheme based upon the Lie
derivative where the conformation tensor is evolved pointwise, and spatial
discretisation enters only through the approximation of the deformation gradient
tensor and particle trajectories and interpolation. We note that similar schemes
have appeared in the literature
\cite{bensaada2004error,lee2006new,lee2011stable,medeiros2021second}, but our
aim in this work is to present results from a full implementation in a
challenging test case that can be compared with other published data.

In this article, we provide an accessible overview of
viscoelastic fluid dynamics, particularly focused on polymer
solutions modeled by the Oldroyd-B constitutive law. We develop a
numerically tractable scheme for the model and prove the
well-posedness. Our developed scheme also performs favorably in
comparison to more computationally intensive alternatives that
typically require high performance computing to realise. A thorough
comparative study is presented, demonstrating the robustness of our
approach. Furthermore, we discuss appropriate mesh design critical
for capturing the complex dynamics of viscoelastic flows.
This article will focus on the lid-driven cavity problem: a
challenging test problem for non-Newtonian flows due to strong
shearing that occurs near the moving lid, as well as strong stress
gradients at the corners. It is a well-studied benchmark test case in
the numerical literature on simulation of the Oldroyd-B model
\cite{boyaval2010lid,comminal2015robust,fattal2005time,habla2014numerical,
pan2009simulation,sousa2016lid}, and serves as a useful example to illustrate key differences between Newtonian and non-Newtonian flows. 

The outline of the rest of the article is as follows. In
\S\ref{sec:oldroyd_b_model}, we describe the Oldroyd-B model. Key concepts from
continuum mechanics are introduced in \S\ref{sec:lie_derivative_approximation},
where a reformulation of the upper convected time derivative is described. In
\S\ref{sec:semi_disc}, a semidiscrete upper convected time derivative is
introduced, and its consistency and approximation order is given. We discuss
fully discrete numerical methods in \S\ref{sec:numerical_schemes}. We work with
a decoupled formulation, and so we discuss solution of the Stokes and Oldroyd-B
systems separately. Descriptions of computational aspects and implementation
details, as well as numerical experiments are presented in \S\ref{sec:numerics}.

\section{The Oldroyd-B model}\label{sec:oldroyd_b_model}

Let $\W\subset \reals^d$ be a convex domain with polygonal boundary,
and let $\vec u:\W \times [0, T] \to \reals^d$ be the spatial velocity
field, with $p:\W \times [0, T] \to \reals$ the pressure. The (symmetric)
\emph{conformation tensor} $\sig:\W \to \reals^{d^2}$ describes the
non-Newtonian component of the stress. We let $\symgrad{\vec u}$ be
the rate of strain tensor, or symmetrised velocity gradient:
\begin{equation*}
  \symgrad{\vec u} = \frac 1 2 \left(\grad \vec u + (\grad \vec u)^T\right).
\end{equation*}
The Oldroyd-B model for the flow of a fluid in $\W$ consists of the
Navier-Stokes equations for incompressible flow given by
\begin{equation}\label{eq:Navier_Stokes_momentum}
  \begin{split}
    \operatorname{Re}
    \left(\frac{\partial \vec u}{\partial t}
    +
    (\vec u \cdot \grad) \vec u\right)
    &=
    - \grad p
    + 2\beta \operatorname{div}\qp{\symgrad{\vec u}}
    + \frac{1 - \beta}{\operatorname{Wi}}\operatorname{div}\qp{\sig}
    \\
    \operatorname{div} \vec u &= 0.
  \end{split}
\end{equation}
This is coupled to a hyperbolic constitutive law for the evolution of
the stress, and is given below in non-dimensional form.
\begin{equation}\label{eq:constitutive_law}
  \frac{\partial \sig}{\partial t} + (\vec u \cdot \grad) \sig 
  -
  (\grad \vec u) \sig - \sig (\grad \vec u)^T
  =
  -\frac{1}{\operatorname{Wi}}(\sig - \vec I).
\end{equation}

The system has three parameters, namely the familiar Reynolds number
$\operatorname{Re}$, the elasticity ratio $\beta$ and the
\emph{Weissenberg number}, $\wi$. The regime of interest in this work
is low $\operatorname{Re}$ and high $\wi$. We will assume that $\beta
> 0$ to avoid the complications that can arise from the viscous term
being removed and the system becoming fully hyperbolic (these
difficulties are discussed in more detail in \cite{bonvin2001gls}). We
note that the conformation tensor is related to the polymeric
extra-stress tensor $\vec \tau$ by
\begin{equation*}
    \sig = \vec I + \frac{\operatorname{Wi}}{1-\beta}\vec \tau.
\end{equation*}
We note that in the context in this work (i.e. a lid-driven cavity), the
Weissenberg and Deborah numbers are related to each other via the aspect ratio
of the cavity \cite{pakdel1997cavity}, and are in fact equal for a cavity of
square cross-section.

We prescribe Dirichlet boundary conditions $\vec w$ on $\partial \W$ for the velocity such
that $\vec w \cdot \vec n = 0$ on $\partial \W$, where $\vec n$ is the outward
facing normal vector to the boundary, i.e. the problem is driven by tangential
boundary values for the velocity. We note that since Dirichlet boundary
conditions are prescribed on the whole boundary, to ensure that there exists a
uniquely determined pressure field we include the constraint that the pressure
has mean zero. Initial conditions for the fluid velocity and conformation tensor
are given by $\vec u(\vec x, 0) = \vec u^0(\vec x)$ and $\sig(\vec x, 0) =
\sig^0(\vec x)$ respectively.

The left hand side of Equation \eqref{eq:constitutive_law} is the \emph{upper
convected time derivative}, introduced by Oldroyd \cite{oldroyd1950formulation}
and discussed in greater detail in \S \ref{sec:lie_derivative_approximation}.
See also \cite{hinch2021oldroyd} for an overview. It gives the rate of change of
a tensor quantity in a convected coordinate system which moves and deforms with
the fluid. Such a rate is appropriate to derive constitutive laws which exhibit
material frame indifference that is, the principle that the physics of the
constitutive law should be independent of the frame of the observer.

\begin{remark}[Boundary conditions for the conformation tensor]
The problem described above is an enclosed flow. Since there is no
inflow boundary, and since the constitutive relation
\eqref{eq:constitutive_law} is hyperbolic, no boundary condition is
required for the conformation tensor.
The case of inflow boundary conditions for the velocity,
which we do not consider here, requires additional boundary
conditions for the conformation tensor. This is not a well studied
problem and another potential source of instabilities in these
flows.
\end{remark}

To complete the exposition, it is appropriate to discuss some of the
assumptions made in the derivation of the Oldroyd-B model. The
introduction of the upper convected time derivative by Oldroyd
\cite{oldroyd1950formulation} assumes that the physics of the
constitutive relation are formulated in terms of the evolution of
contravariant components of a second order tensor in the convected
frame moving and deforming with the fluid. This is a choice, justified
by experiments exhibiting rod-climbing (a phenomenon that the
Oldroyd-B model replicates), rather than physics, and other choices
may be more appropriate (see the discussion in
\cite{hinch2021oldroyd}).

The model has also been derived from kinetics using a dumbbell and spring model
in which no limit is placed upon the extension of the spring. However, if the
flow is strong (that is, if the largest eigenvalue of $\grad \vec u$ exceeds
$\tfrac{1}{2 \lambda}$, where $\lambda$ is the relaxation time
\cite{hinch2021oldroyd}), the molecule length increases without limit. It has
been shown that, as a result of this, infinite stress can occur in the interior
of a steady flow. In many regimes however, the Oldroyd-B fluid is a good
approximation of a Boger fluid. It is the simplest differential viscoelastic
model available which enjoys material frame indifference (simpler models do
exist such as the linear Maxwell model, see \cite[Chapter
2]{renardy2000mathematical} for an overview, but do not have this property). It
is also popular, with a wide range of benchmark numerical data available in the
literature. It therefore represents a prototypical model for the derivation and
testing of numerical methods, see for example 
\cite{fattal2005time,jafari2018property,lee2011stable,lukavcova2016energy,
pimenta2017stabilization} and many others. 

\begin{remark}
Taking the kinetic viewpoint, the Oldroyd-B model can be improved by
considering for example variants of the \emph{Finitely Extensible
Nonlinear Elastic} (FENE) model for the spring force in the polymer
molecules, which limits molecules to be finitely extensible and
avoiding the issue of unbounded stress growth. This does however
result in a more complicated nonlinearity in the constitutive law. For
example, the FENE-P model has constitutive law given by
\begin{equation*}
  \frac{\partial \sig}{\partial t} + (\vec u \cdot \grad) \sig 
  -
  (\grad \vec u) \sig - \sig (\grad \vec u)^T
  =
  \frac{1}{\wi}\left(\vec I -\frac{1}{1 - \frac{1}{b^2}\tr(\sig)}\sig\right),
\end{equation*}
where the additional parameter $b$ represents the maximum
extensibility of the polymer molecules. The extension of the current
work to more realistic models will be the subject of further research.
\end{remark}

\section{A Lie derivative framework for the upper convected time derivative}
\label{sec:lie_derivative_approximation}

In his seminal paper \cite{oldroyd1950formulation}, Oldroyd described the
importance of formulating a time derivative with which to construct constitutive
laws for non-Newtonian stresses that does not depend on the choice of reference
frame. Such derivatives have to take account of the moving fluid flow and its
deformation. The \emph{upper convected time derivative}, one of two derivatives introduced by Oldroyd, is one of (infinitely)
many such time derivatives. It is now well known that these rates are special
cases of Lie derivatives (see eg \cite[chapter II]{hughes1984numerical}), and
formulating the constitutive law in a way that respects this can have
structure-preserving advantages. In this section we give the concepts necessary
to formulate the constitutive law for a non-Newtonian fluid as a Lie derivative.

\subsection{Flow map and deformation gradient}
\label{sec:flowmap}

Let $\vec y$ be the flow map for the fluid motion. That is, the fluid
particle which at time $t$ has position $\vec x$ has position $\vec
y(\vec x,t;s)$ at time $s$. The flow map satisfies the following ordinary differential equation
\begin{equation*}
  \begin{split}
    \partial_s \vec y(\vec x, t; s)
    &= \vec u(\vec y(\vec x, t; s), s) 
    \\
    \vec y(\vec x,t;t) &= \vec x.
  \end{split}
\end{equation*} 
One can express the material derivative in terms of the flow map by
examining the infinitesimal derivative, that is
\begin{equation*}
  \left.\frac{\partial}{\partial s}\vec u(\vec y(\vec x, t; s), s)\right|_{s=t}
  =
  \left(\partial_t \vec + \vec u \cdot \grad \right)\vec u(x, t).
\end{equation*}
We define the velocity gradient with the convention that it has components 
\begin{equation*}
  (\grad \vec u)_{ij} 
  =
  \frac{\partial u_i}{\partial x_j},
\end{equation*}
and make note that some authors define the velocity gradient as the
transpose of the above tensor. The deformation gradient tensor
describes the deformation experienced by a fluid parcel in a
neighbourhood of $(\vec x, t)$ between times $s_1$ and $s_2$. In a Cartesian coordinate system, it has components
\begin{equation}\label{eq:def_grad_defn}
  \vec F_{ij}(\vec x, t; s_1, s_2) 
  =
  \frac{\partial \vec y_i(\vec x, t; s_2)}{\partial \vec y_j(\vec x, t; s_1)}.
\end{equation} 
It can alternatively be expressed in the following way:
\begin{equation*} 
  \vec F(\vec x, t; t, s) = \grad \vec y(\vec x, t;s).
\end{equation*}
Where no confusion can occur, we will omit the particle label and for
brevity write $\vec F(s_1, s_2)$ for $\vec F(\vec x, t; s_1, s_2)$ and
$\vec y(s)$ for $\vec y(\vec x,t;s)$.

It follows immediately from the definition that $\vec F(s, s) = \vec
I$, where $\vec I$ is the identity tensor, and that $\vec F(s_1,
s_2)=\vec F(s_2, s_1)^{-1}$. The deformation gradient is a two point
tensor, and the following lemma describes its evolution with respect
to the two faux time indices:

\begin{lemma}[Evolution of the deformation gradient along
characteristics]\label{lem:evolution_of_F}
The deformation gradient tensor satisfies the following differential equations.
  \begin{equation}\label{eq:def_grad_ode_1}
    \frac{\partial}{\partial s_1}\vec F(\vec x, t; s_1, s_2)
    =
    - \vec F(\vec x, t; s_1, s_2) \grad \vec u(\vec y(\vec x, t; s_1), s_1)
  \end{equation}
  \begin{equation}\label{eq:def_grad_ode_2}
    \frac{\partial}{\partial s_2}\vec F(\vec x, t; s_1, s_2)
    =
    \grad \vec u(\vec y(\vec x, t; s_2), s_2) \vec F(\vec x, t; s_1, s_2).
  \end{equation}
\end{lemma}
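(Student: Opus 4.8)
The plan is to prove the two evolution equations in sequence, establishing \eqref{eq:def_grad_ode_2} by a direct computation and then deducing \eqref{eq:def_grad_ode_1} from it via the inverse relation $\vec F(s_1, s_2) = \vec F(s_2, s_1)^{-1}$ noted just before the lemma. Throughout I would assume the velocity field $\vec u$ is smooth enough that the flow map is $C^2$ in all of its arguments, which is exactly what is needed to differentiate the definition \eqref{eq:def_grad_defn} freely.

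For \eqref{eq:def_grad_ode_2} I would work componentwise from the definition, writing $\vec F_{ij} = \partial \vec y_i(s_2) / \partial \vec y_j(s_1)$. The central observation is that the coordinate $\vec y_j(s_1)$ appearing in the denominator is a Lagrangian label that does not depend on $s_2$, so the mixed partials commute:
\begin{equation*}
  \frac{\partial}{\partial s_2}\frac{\partial \vec y_i(s_2)}{\partial \vec y_j(s_1)}
  =
  \frac{\partial}{\partial \vec y_j(s_1)}\frac{\partial \vec y_i(s_2)}{\partial s_2}.
\end{equation*}
I would then substitute the flow-map ordinary differential equation $\partial_{s_2}\vec y_i(s_2) = u_i(\vec y(s_2), s_2)$ and apply the chain rule in the spatial variable, producing $\sum_k (\grad \vec u)_{ik}(\vec y(s_2), s_2)\, \vec F_{kj}$; reassembling the indices gives exactly \eqref{eq:def_grad_ode_2}.

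To obtain \eqref{eq:def_grad_ode_1} I would differentiate the identity $\vec F(s_1, s_2)\vec F(s_2, s_1) = \vec I$ with respect to $s_1$. The product rule yields
\begin{equation*}
  \qp{\frac{\partial}{\partial s_1}\vec F(s_1, s_2)}\vec F(s_2, s_1)
  +
  \vec F(s_1, s_2)\frac{\partial}{\partial s_1}\vec F(s_2, s_1)
  =
  \vec 0,
\end{equation*}
and the second term is handled by applying the already-proven \eqref{eq:def_grad_ode_2} to $\vec F(s_2, s_1)$, whose \emph{second} time-argument is now $s_1$, giving $\partial_{s_1}\vec F(s_2, s_1) = \grad \vec u(\vec y(s_1), s_1)\vec F(s_2, s_1)$. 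Right-multiplying the displayed identity by $\vec F(s_2, s_1)^{-1} = \vec F(s_1, s_2)$ and collapsing the factor $\vec F(s_2, s_1)\vec F(s_1, s_2) = \vec I$ then isolates $\partial_{s_1}\vec F(s_1, s_2) = -\vec F(s_1, s_2)\grad \vec u(\vec y(s_1), s_1)$, which is \eqref{eq:def_grad_ode_1}.

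The main obstacle is the interchange of the $s_2$-derivative with the spatial derivative in the first step: this is precisely where the two-point tensor structure must be respected, since one has to treat $\vec y_j(s_1)$ as a fixed label independent of $s_2$. Once that commutation is justified by the $C^2$ regularity of the flow map, the remainder is routine differentiation together with bookkeeping of the group and inverse structure of the deformation gradient, so the derivation of \eqref{eq:def_grad_ode_1} from \eqref{eq:def_grad_ode_2} carries no further difficulty.
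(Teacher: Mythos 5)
Your proof is correct, and for \eqref{eq:def_grad_ode_2} it coincides exactly with the paper's argument: the paper likewise differentiates the components $\vec F(s_1,s_2)_{ij} = \partial \vec y_i(s_2)/\partial \vec y_j(s_1)$ in $s_2$, exchanges the order of differentiation (the label $\vec y_j(s_1)$ being independent of $s_2$), substitutes the flow-map ODE and applies the chain rule to arrive at $\grad \vec u(\vec y(s_2),s_2)_{ik}\vec F(s_1,s_2)_{kj}$. Where you diverge is \eqref{eq:def_grad_ode_1}: the paper simply asserts that it ``follows in a similar manner,'' i.e.\ by the symmetric direct componentwise computation in $s_1$, whereas you deduce it from the already-proven \eqref{eq:def_grad_ode_2} by differentiating the identity $\vec F(s_1,s_2)\vec F(s_2,s_1)=\vec I$ and right-multiplying by $\vec F(s_2,s_1)^{-1}=\vec F(s_1,s_2)$. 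Your argument is sound: applying \eqref{eq:def_grad_ode_2} to $\vec F(s_2,s_1)$, whose \emph{second} argument is $s_1$, indeed gives $\partial_{s_1}\vec F(s_2,s_1)=\grad\vec u(\vec y(s_1),s_1)\vec F(s_2,s_1)$, and there is no circularity, since the inverse relation $\vec F(s_1,s_2)=\vec F(s_2,s_1)^{-1}$ is established in the paper directly from the definition (a chain-rule identity) before the lemma is stated, independently of either evolution equation. If anything, your route is the more careful one: the ``similar'' direct computation requires differentiating with respect to the moving Lagrangian label sitting in the denominator of \eqref{eq:def_grad_defn}, which is the subtler of the two calculations, and your product-rule argument sidesteps it entirely at the modest cost of invoking invertibility of $\vec F$, which is available for free. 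Both routes need the $C^2$ regularity of the flow map that you state, so nothing is lost in generality.
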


\begin{proof}
  We begin with Equation \eqref{eq:def_grad_defn} and use the chain
  rule to compute the components of $\partial_{s_2}\vec F(s_1, s_2)$:
  \begin{equation*}
    \begin{split}
    \frac{\partial}{\partial s_2}\vec F( s_1, s_2)_{ij}
    &=
    \frac{\partial}{\partial s_2}
    \left(\frac{\partial \vec y_i(s_2)}{\partial \vec y_j(s_1)}\right)
    = 
    \frac{\partial \vec u_i(\vec y(s_2),s_2)}{\partial \vec y_j(s_1)}
    =
    \frac{\partial \vec u_i(\vec y(s_2),s_2)}{\partial \vec y_k(s_2)}
    \frac{\partial \vec y_k(s_2)}{\partial \vec y_j(s_1)}\\
    &= 
    \grad \vec u(\vec y(s_2),s_2)_{ik}\vec F(s_1, s_2)_{kj},
  \end{split}
  \end{equation*}
  showing \eqref{eq:def_grad_ode_2}. The proof of
  \eqref{eq:def_grad_ode_1} follows in a similar manner. 
  \end{proof}

\begin{proposition}\label{prop:def_f_is_1}
  The determinant of the deformation gradient $\vec F$ is an invariant
  of Equations \eqref{eq:def_grad_ode_1} and
  \eqref{eq:def_grad_ode_2}. In particular, $\det\qp{\vec F(s_1, s_2)}
  = 1$ for all $s_1, s_2$.
\end{proposition}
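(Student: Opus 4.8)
The plan is to show that the determinant is constant along each of the two faux time directions by differentiating it via Jacobi's formula (the Abel--Liouville--Jacobi identity) and then invoking incompressibility. Since $\vec F(s_1, s_2) = \vec F(s_2, s_1)^{-1}$ is invertible, as noted just before the proposition, Jacobi's formula applies to the one-parameter family and gives
\begin{equation*}
  \frac{\partial}{\partial s_2}\det\qp{\vec F(s_1, s_2)}
  =
  \det\qp{\vec F(s_1, s_2)}\,\tr\qp{\vec F(s_1, s_2)^{-1}\frac{\partial}{\partial s_2}\vec F(s_1, s_2)}.
\end{equation*}

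First I would substitute the evolution equation \eqref{eq:def_grad_ode_2}, which replaces $\partial_{s_2}\vec F$ by $\grad\vec u\,\vec F$, so that the trace term becomes $\tr\qp{\vec F^{-1}\grad\vec u\,\vec F}$. By the cyclic invariance of the trace this collapses to $\tr\qp{\grad\vec u} = \operatorname{div}\vec u$, evaluated along the trajectory at time $s_2$. The key step is then to invoke incompressibility: since the velocity field satisfies $\operatorname{div}\vec u = 0$ from \eqref{eq:Navier_Stokes_momentum}, the trace vanishes identically and hence $\partial_{s_2}\det\vec F = 0$. An identical computation using \eqref{eq:def_grad_ode_1} --- where $\vec F^{-1}\qp{-\vec F\grad\vec u} = -\grad\vec u$ again has trace $-\operatorname{div}\vec u = 0$ --- shows $\partial_{s_1}\det\vec F = 0$, which together establish that $\det\vec F$ is invariant under both ODEs.

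To pin down the constant value, I would fix $s_1$ and regard $\det\vec F(s_1, s_2)$ as a function of $s_2$ alone; constancy in $s_2$ means it equals its value at $s_2 = s_1$, where $\vec F(s_1, s_1) = \vec I$ and thus $\det\vec F = \det\vec I = 1$. The only point requiring care is the legitimacy of Jacobi's formula, which hinges on the invertibility of $\vec F$ noted above together with sufficient smoothness of the flow map so that $\vec F$ is differentiable in its time arguments; granting this, the conclusion is immediate. I expect the substantive content to be conceptual rather than technical: the invariance of the determinant is precisely the geometric expression of the incompressibility constraint, namely that the fluid flow is volume preserving.
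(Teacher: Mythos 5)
Your proof is correct and takes essentially the same route as the paper: the paper's proof simply cites the Abel--Liouville--Jacobi identity (as \cite[\S IV.3, Lemma 3.1]{hairer2006geometric}) together with $\tr\qp{\grad\vec u} = \operatorname{div}\vec u = 0$ and $\vec F(s,s) = \vec I$, whereas you derive that identity inline from Jacobi's formula and cyclic invariance of the trace. Your explicit treatment of the $s_1$-equation and of the invertibility needed for the $\vec F^{-1}$ form of Jacobi's formula merely spells out details the paper delegates to the cited lemma.
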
 

\begin{proof} 
  For any $s$, we have $\det\qp{\vec F(s, s)} = \det \qp{\vec I} =
  1$. Now, \eqref{eq:def_grad_ode_2} is a tensor-valued differential
  equation with coefficient matrix $\grad \vec u(\vec y(s_2),
  s_2)$. Since the fluid is incompressible, the trace of the velocity
  gradient is zero. We can therefore invoke \cite[\S IV.3, Lemma
    3.1]{hairer2006geometric} which guarantees $\det\qp{\vec F(s_1,
    s_2)}$ is invariant.
\end{proof}

\subsection{The upper convected time derivative}

For a generic sufficiently smooth tensor function $\vec \zeta$, the
upper convected derivative is defined through
\begin{equation*}
  \overset{\triangledown}{\vec \zeta}
  :=
  \frac{\partial \vec \zeta}{\partial t} + (\vec u \cdot \grad) \vec \zeta
  - (\grad \vec u)\vec \zeta - \vec \zeta(\grad \vec u)^T.
\end{equation*}
This has a relationship to the notion of a \emph{Lie derivative} of
the tensor quantity $\vec \zeta$ which we will exploit in this
work. Indeed, one may introduce the Lie derivative which follows the
fluid with deformation gradient tensor $\vec F$ by
\begin{equation*}
  (\mathcal{L}_u \vec \zeta)(\vec y(\vec x, t; s), s)
  :=
  \vec F(t, s)\frac{\partial}{\partial s}
  \left[\vec F(s,t)\vec \zeta(\vec y(s),s) \vec F(s,t)^T\right]\vec F(t,s)^T.
\end{equation*}

The upper-convected derivative can be inferred from the fundamental
properties of $\vec F$ given in \S\ref{sec:flowmap}
\cite{lee2006new,medeiros2021second}
\begin{equation*}
  \overset{\triangledown}{\vec \zeta} = \lim_{s \to t} (\mathcal{L}_u
  \vec \zeta)(\vec y(\vec x, t; s), s).
\end{equation*}
The Oldroyd-B constitutive relation for the conformation tensor can therefore be
formulated in terms of the Lie derivative resulting in an equation in one
(characteristic) time variable as follows
\begin{equation}\label{eq:Oldroyd_B_Lie}
  \overset{\triangledown}{\sig}
  =
  \lim_{s \to t} (\mathcal{L}_u \sig)(\vec y(\vec x, t; s), s).
  =
  -\frac{1}{\operatorname{Wi}} \left(\sig - \vec I\right).
\end{equation}
We see that the constitutive law in Oldroyd-B is a linear ODE along
the characteristics induced by the deformation gradient $\vec F$. This
is why Oldroyd-B is often referred to as a linear viscoelastic
model. It also offers the opportunity to utilise the a wide variety of
timestepping methods designed for problems of this kind.

\section{Discretisation of the upper convected time
derivative}\label{sec:semi_disc}

In this section we introduce a discretisation of the upper convected time derivative motivated by its formulation as a Lie derivative given in \S\ref{sec:lie_derivative_approximation}. The method will be similar to characteristic schemes, where the material
derivative is discretised along the characteristics of the flow
\cite{pironneau1982transport,rui2002second,suli1988convergence}. Adapting this
methodology to account for the deformation of the fluid provides a natural way
to preserve the positive definiteness of the conformation tensor of any given
discretisation. This idea has been explored previously in the context of several different numerical frameworks \cite{boyaval2009free,lee2011stable,medeiros2021second,petera2002new}.

We divide the time domain $[0,T]$ into a uniform partition of $N$
subintervals of length $\Delta t$ (so that $T = N \Delta t$). We write
$t^n = n \Delta t$ for $n = 0,1,...,N$, non uniform timestepping is
certainly possible however for clarity of exposition we will restrict
to uniform. We also use the notation $\vec u^n(\vec x) = \vec u(\vec
x, t^n)$. Starting from
\begin{equation}\label{eq:Lie_derivative_def_1}
  (\mathcal{L}_u \vec \zeta)(\vec y(\vec x, t; s), s)
  =
  \vec F(t, s)\frac{\partial}{\partial s}
  \left[\vec F(s,t)\vec \zeta(\vec y(s),s) \vec F(s,t)^T\right]\vec F(t,s)^T,
\end{equation}
we will utilise a finite difference approach which motivates the
following definitions:

\tikzset{pics/segment/.style args=
{angle #1 left #2 right #3}{
code={\draw[->,rotate=#1] (180:#2)--(0:#3);}}} 
\begin{figure}
  \begin{center}
    \fbox{
\begin{tikzpicture}
  \clip (-1, 1.5) rectangle (9, 4.5); 
  \def\mecurve{(0,2) ..controls +(60:6) and +(-120:6) .. (8,4)}
  \draw \mecurve
  coordinate[pos=.85] (V)
  coordinate[pos=.65] (R)
  coordinate[pos=.71] (Q)
  coordinate[pos=.2] (P);
  \foreach \t in {.71}
  \path \mecurve 
  pic[pos=\t,sloped,cyan,thick]
  {segment=angle 0 left 0 right 2.5};
  \foreach \t in {.71}
  \path \mecurve 
  pic[pos=\t,sloped,black,dashed,thick]
  {segment=angle 0 left 3.8 right 0};
  \fill (1.7,2.83) circle(2pt) node[below]{$\vec y^t_{\Delta t}(\vec x)$};
  \fill (Q) circle(2pt) node[above]{$(\vec x, t)$};
  \fill (P) circle(2pt) node[above right]{$\vec y(\vec x, t; t-\Delta t)$};
  \node[right] at (6.4,1.8) {$\vec u(\vec x, t)$};
\end{tikzpicture}
 }
    \caption{\label{fig:flow_map_illustration}Graphical illustration of the departure point $\vec y(\vec x, t; t-\Delta t)$ of $(\vec x, t)$ under the flow map, and the approximate departure point $\vec y^t_{\Delta t}(\vec x)$. The curve represents the characteristic that the particle $(\vec x, t)$ belongs to.}
  \end{center}
\end{figure}
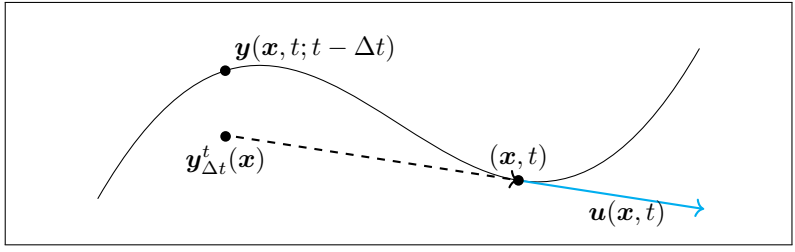

\begin{definition}[Semidiscrete upper convected derivative]\label{def:disc_uc}

  For $\vec x \in \W$, $0 < \Delta t < T$ and $t \in [\Delta t, T]$, define 
  \begin{equation}\label{eq:approx_depart_point}
    \vec y^t_{\Delta t}(\vec x) 
    =
    \vec x - \Delta t \, \vec u(\vec x, t),
  \end{equation}
  as an approximation of the departure point of $\vec x$ under the
  fluid velocity at time $t$. Further, let
  \begin{equation}\label{eq:approx_def_grad}
    \vec F^t_{\Delta t}(\vec x) 
    =
    \vec I + \Delta t \, \grad \vec u(\vec x, t),
  \end{equation}
  denote an approximation of the deformation gradient tensor between
  times $t-\Delta t$ and $t$.  We define a discrete upper-convected
  time derivative of a generic, sufficiently smooth, tensor function
  $\vec \zeta$ by the velocity field $\vec u(\vec x, t)$ as
  \begin{equation*}
    D_{(\vec u, \,\Delta t)}\vec \zeta(\vec x, t)
    =
    \frac{1}{\Delta t}
    \Big(\vec \zeta(\vec x, t) - \vec F^t_{\Delta t}(\vec x)
    \vec \zeta(\vec y^t_{\Delta t}, t - \Delta t)
    \vec F^t_{\Delta t}(\vec x)^T\Big).
  \end{equation*}
\end{definition}

\begin{remark}
  \label{rem:different_derivative_approximations}
  Different choices are available to make this approximation depending
  on the desired properties of the scheme. Definition
  \ref{def:disc_uc} results from choosing a backward Euler
  discretisation, whereby the right hand side of Equation
  \eqref{eq:Lie_derivative_def_1} is approximated by
  \begin{equation*}
    \vec F(t, s)\frac{1}{\Delta t}\Big\{\vec F(s,t)\vec \zeta(\vec y(s),s)
    \vec F(s,t)^T
    - 
    \vec F(s-\Delta t,t)\vec \zeta(\vec y(s-\Delta t),s-\Delta t)
    \vec F(s-\Delta t,t)^T\Big\}\vec F(t, s)^T,
  \end{equation*}
  which after evaluating at $s=t$ becomes
  \begin{equation}\label{eq:lie_derivative_approx}
    \begin{split}
    \frac{1}{\Delta t}\left\{\vec \zeta(\vec x, t)
    - 
    \vec F(t-\Delta t,t)\vec \zeta(\vec y(t-\Delta t),t-\Delta t) 
    \vec F(t-\Delta t,t)^T\right\}.
    \end{split}
  \end{equation}
  It is not always practical (or possible) to exactly evaluate the deformation
  gradient and flow map, so they are replaced with approximations $\vec
  F^t_{\Delta t}$ and $\vec y^t_{\Delta t}$ respectively in the definition of
  $D_{(\vec u, \,\Delta t)}$.
  
  The structure associated to (\ref{eq:Oldroyd_B_Lie}) is dissipative
  in nature hence a BDF discretisation is quite natural. A detailed
  discussion of approximations of the upper convected time derivative
  in the manner described above is given in \cite{medeiros2021second},
  where the authors discuss first and second order approximations of
  BDF type.
\end{remark}

We now turn our attention to well-posedness and properties of the
scheme. We first show in Lemma \ref{lem:well-defined} that $D_{(\vec
  u, \,\Delta t)}$ is well-defined as long as $\Delta t$ is chosen to
be sufficiently small. For a velocity $\vec u$ satisfying a
homogeneous Dirichlet boundary condition, this result is proved in
\cite{rui2002second}. We provide an extension of that result
appropriate for nonzero tangential velocity boundary conditions such
as the lid-driven cavity.

\begin{lemma}\label{lem:well-defined} Let $\W$ be the unit square or
  the unit cube. Assume that $\vec u \cdot \vec n = 0$ on $\partial
  \W$, and that $\Delta t$ is chosen so that $\Delta t \Norm{\vec
    u}_{C^0([0,T];\sob{1}{\infty}(\W))}<1$.  Then for any $\vec x \in
  \W$, and any $t \in [\Delta t, T]$, we have $\vec y^t_{\Delta
    t}(\vec x) \in \W$.
\end{lemma}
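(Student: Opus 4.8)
The plan is to argue componentwise, exploiting the fact that on the unit square (or cube) the outward normal on each face is $\pm$ a coordinate direction. Writing $\vec y^t_{\Delta t}(\vec x) = \vec x - \Delta t\,\vec u(\vec x,t)$ and setting $M := \Norm{\vec u}_{C^0([0,T];\sob{1}{\infty}(\W))}$, it suffices to show that each component satisfies $0 \le x_i - \Delta t\,u_i(\vec x,t) \le 1$ for every $\vec x \in \W$ and $t \in [\Delta t, T]$, since then the departure point lies in $[0,1]^d = \W$.

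First I would use the boundary condition to pin down where each component vanishes. On the face $\{x_i = 0\}$ the outward normal is $-\vec e_i$ and on $\{x_i = 1\}$ it is $+\vec e_i$, so the hypothesis $\vec u \cdot \vec n = 0$ forces $u_i = 0$ on both of these faces. This decoupling --- each scalar component being controlled on the two faces perpendicular to its own axis --- is exactly what the cube geometry buys us, and is the reason the lemma is stated for the square or cube rather than for a general convex polygon, where the normals are no longer aligned with the axes and the components do not separate.

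Next I would convert vanishing on the faces into a pointwise bound using the $\sob{1}{\infty}$ regularity. Fixing $\vec x$ and moving only in the $i$-th coordinate from the face $x_i = 0$ up to $\vec x$, the fundamental theorem of calculus (valid since $u_i$ is Lipschitz) gives $\Abs{u_i(\vec x,t)} \le M\,x_i$; moving instead from the face $x_i = 1$ down to $\vec x$ gives $\Abs{u_i(\vec x,t)} \le M\,(1 - x_i)$. These two directional estimates are the heart of the argument.

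Finally I would combine them with the smallness hypothesis $\Delta t\,M < 1$, using the appropriate estimate for each inequality. For the lower bound, $x_i - \Delta t\,u_i \ge x_i - \Delta t\,M\,x_i = x_i(1 - \Delta t\,M) \ge 0$; for the upper bound, $x_i - \Delta t\,u_i \le x_i + \Delta t\,M(1 - x_i) = x_i(1-\Delta t\,M) + \Delta t\,M \le 1$, where the last step uses $x_i \le 1$ together with $1 - \Delta t\,M > 0$. I expect the only genuinely delicate point to be justifying the directional bound from $\sob{1}{\infty}$ regularity and keeping careful track of which face estimate yields which inequality --- using the $x_i$ bound for the lower inequality and the $(1-x_i)$ bound for the upper one, each of which fails if the roles are swapped; the remainder is a short computation.
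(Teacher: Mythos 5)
Your proof is correct and is essentially the paper's argument in componentwise form: the paper also fixes each boundary face, uses $\vec u \cdot \vec n = 0$ at the foot of the perpendicular together with the fundamental theorem of calculus and the $\sob{1}{\infty}$ bound to get $\abs{\vec u(\vec x,t)\cdot\vec n} \le M\,\mathrm{dist}(\vec x,\Gamma_i)$, and concludes the displacement cannot cross any face --- for the cube this is exactly your estimates $\abs{u_i} \le M x_i$ and $\abs{u_i} \le M(1-x_i)$, since the normals are $\pm \vec e_i$ and the distances to the faces are $x_i$ and $1-x_i$. Your final componentwise computation showing $0 \le x_i - \Delta t\, u_i \le 1$ is just a cleaner packaging of the paper's conclusion that the segment joining $\vec x$ and $\vec y^t_{\Delta t}(\vec x)$ stays inside $\W$.
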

\begin{proof}
  Let $\vec x \in \W$. Then, by (\ref{eq:approx_depart_point})
  \begin{equation*}
    |\vec x - \vec y^t_{\Delta t}(\vec x)| = \Delta t |\vec u(\vec x, t)|.
  \end{equation*}
  The boundary of $\W$ has the form $\partial \W = \cup_{i=1}
  \Gamma_i$ where $\Gamma_i$ is a line segment when $d=2$ and a plane
  segment when $d=3$.  Let $\vec x \in \W$ and for some $i$, denote by
  $\vec x_{\Gamma_i}$ the unique point on $\Gamma_i$ that satisfies
  $|\vec x- \vec x_{\Gamma_i}| = \text{dist}(\vec x, \Gamma_i)$. As
  $\W$ is convex this point clearly lies in the interior of
  $\Gamma_i$, so that the unit outward normal to the boundary at $\vec
  x_{\Gamma_i}$ is well-defined, and the line segment joining $\vec x$
  and $\vec x_{\Gamma_i}$ is perpendicular to $\Gamma_i$. We consider
  the component of the velocity in the direction of $\vec n(\vec
  x_{\Gamma_i})$. Since $\vec u(\vec x_{\Gamma_i},t)\cdot \vec n(\vec
  x_{\Gamma_i}) = 0$,
  \begin{equation*}
    \begin{split}
    \vec u(\vec x, t)\cdot \vec n(\vec x_{\Gamma_i})
    &=
    \int_0^1\frac{\partial}{\partial s}\vec u(\vec x_{\Gamma_i} 
    + s(\vec x - \vec x_{\Gamma_i}),t)\cdot \vec n(\vec x_{\Gamma_i}) \diff s,\\
    &= (\vec x - \vec x_{\Gamma_i})\cdot 
    \int_0^1 \grad \vec u(\vec x_{\Gamma_i} 
    + 
    s(\vec x - \vec x_{\Gamma_i}),t)\cdot \vec n( \vec x_{\Gamma_i}) \diff s \\
    &\leq \text{dist}(\vec x, \Gamma_i)
    \Norm{\vec u}_{C^0([0,T];\sob{1}{\infty}(\W))},
    \end{split}
  \end{equation*} 
  where $\vec n(\vec x_{\Gamma_i})$ is the unit normal to $\partial
  \W$ at $\vec x_{\Gamma_i}$. Again, using
  (\ref{eq:approx_depart_point}) we have shown that
 \begin{equation*}
  |(\vec x - \vec y^t_{\Delta t}(\vec x)) \cdot \vec n(\vec x_{\Gamma_i})| 
  \leq
   \text{dist}(\vec x, \Gamma_i)\Delta t
   \Norm{\vec u}_{C^0([0,T];\sob{1}{\infty}(\W))}
   \leq 
   \text{dist}(\vec x,\Gamma_i ),
 \end{equation*}
 that is, the velocity at $\vec x$ in the direction of $\vec n(\vec
 x_{\Gamma_i})$ is not large enough for the line segment joining $\vec x$ and
 $\vec y^t_{\Delta t}(\vec x)$ to cross $\Gamma_i$. Since
 the choice of $i$ was arbitrary, we must have that $\vec y^t_{\Delta t}(\vec x)
 \in \W$.
\end{proof}

In the following lemmata, we discuss the
approximation properties of $\vec y^t_{\Delta t}$, $\vec F^t_{\Delta
  t}$ and finally $D_{(\vec u, \,\Delta t)}$. We will utilise certain
results which were proved in \cite{medeiros2021second}. The first,
Lemma \ref{lem:characteristics}, gives a bound on the error arising
from approximating the departure point using
\eqref{eq:approx_depart_point} (see Figure \ref{fig:flow_map_illustration} for an illustration), while Lemma
\ref{lem:function_at_point} bounds the error in evaluating a tensor
function at this approximate departure point as opposed to the true
point. The error in approximating the deformation gradient is bounded
in Lemma \ref{lem:disc_def_grad}, and finally a bound showing the
consistency and approximation order of the discrete upper convected
derivative $D_{(\vec u, \,\Delta t)}$ is given in Theorem
\ref{thm:upper_convected}.

\begin{lemma}[Approximation of characteristics {\cite[Lemma 2]{medeiros2021second}}]
  \label{lem:characteristics}
 Suppose that $\vec u \cdot \vec n = 0$ on $\partial \W$ and
 that\\ $\Delta t \Norm{\vec
   u}_{C^0([0,T];\sob{1}{\infty}(\W))}<1$. Then for any $\vec x \in
 \W$ and $t \geq \Delta t$,

\begin{equation}\label{eq:characteristic_result}
  \vec y(\vec x, t; t-\Delta t) 
  -
  \vec y^t_{\Delta t}(\vec x)
  = 
  \frac{\Delta t^2}{2} \frac{D\vec u}{Dt}
  +
  \mathcal{O}(\Delta t^3).
\end{equation}
\end{lemma}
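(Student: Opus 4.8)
The plan is to prove the estimate by Taylor expanding the exact flow map $s \mapsto \vec y(\vec x, t; s)$ about the base time $s = t$ and evaluating at $s = t - \Delta t$, then comparing against the explicit forward-Euler approximation $\vec y^t_{\Delta t}(\vec x) = \vec x - \Delta t\, \vec u(\vec x, t)$. Writing $\vec y(s) := \vec y(\vec x, t; s)$ for brevity, the defining ODE $\partial_s \vec y(s) = \vec u(\vec y(s), s)$ together with $\vec y(t) = \vec x$ immediately supplies the zeroth and first Taylor coefficients, namely $\vec y(t) = \vec x$ and $\partial_s \vec y(t) = \vec u(\vec x, t)$.

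First I would compute the second derivative by differentiating the ODE in $s$ and applying the chain rule, obtaining $\partial_s^2 \vec y(s) = \partial_t \vec u(\vec y(s), s) + (\grad \vec u)(\vec y(s), s)\, \vec u(\vec y(s), s)$. Evaluating at $s = t$ and using the convention $(\vec u \cdot \grad)\vec u = (\grad \vec u)\vec u$ fixed in \S\ref{sec:flowmap}, this is exactly the material derivative $\frac{D\vec u}{Dt}$ evaluated at $(\vec x, t)$. Taylor's theorem with remainder then gives
\[
  \vec y(t - \Delta t) = \vec x - \Delta t\, \vec u(\vec x, t) + \frac{\Delta t^2}{2}\frac{D\vec u}{Dt}(\vec x, t) + R,
\]
and subtracting $\vec y^t_{\Delta t}(\vec x)$ cancels the first two terms, leaving precisely the claimed expansion provided $R = \mathcal{O}(\Delta t^3)$.

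The main obstacle is controlling the remainder $R$ uniformly in $\vec x \in \W$, which requires a bound on $\partial_s^3 \vec y$ along the characteristic over the interval $[t - \Delta t, t]$. This demands two things. First, enough regularity of the velocity field: the $C^0([0,T];\sob{1}{\infty}(\W))$ control appearing in the hypothesis suffices for the smallness argument but not for the third $s$-derivative, which involves second spatial and mixed derivatives of $\vec u$, so one implicitly assumes the extra smoothness needed to make $\Norm{\partial_s^3 \vec y}$ bounded. Second, one must verify that the exact characteristic $\vec y(s)$ stays inside $\W$ for all $s \in [t - \Delta t, t]$, so that $\vec u$ and its derivatives are evaluated at admissible points; the condition $\Delta t \Norm{\vec u}_{C^0([0,T];\sob{1}{\infty}(\W))} < 1$ is exactly what guarantees this, by the same convexity and zero-normal-velocity argument used in Lemma \ref{lem:well-defined}. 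With these in hand the uniform third-derivative bound makes $R = \mathcal{O}(\Delta t^3)$ rigorous. Since the statement is quoted as \cite[Lemma 2]{medeiros2021second}, one may alternatively defer to that proof, the short Taylor argument above being the underlying mechanism.
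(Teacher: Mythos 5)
Your proposal is correct and matches the paper's treatment: the paper gives no proof of its own, deferring to \cite[Lemma 2]{medeiros2021second}, whose argument is precisely your Taylor expansion of the flow map about $s=t$ using $\partial_s^2\vec y = \partial_t\vec u + (\grad\vec u)\vec u = \tfrac{D\vec u}{Dt}$, together with the implicit extra regularity needed for the $\mathcal{O}(\Delta t^3)$ remainder. One small refinement: for the \emph{exact} characteristic the tangency condition $\vec u\cdot\vec n=0$ alone keeps $\vec y(s)$ in $\closure\W$ (no flux through the boundary), with the smallness condition $\Delta t\Norm{\vec u}_{C^0([0,T];\sob{1}{\infty}(\W))}<1$ being what is needed for the straight-line approximate departure point, as in Lemma \ref{lem:well-defined} — but this does not affect the validity of your argument.
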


\begin{lemma}\label{lem:function_at_point} In addition to the hypotheses of
  Lemma \ref{lem:characteristics}, let $\vec \zeta$ be an appropriately
  differentiable tensor function. Then for any $\vec x \in \W$
  and $t \geq \Delta t$, 
\begin{equation*}
  \vec \zeta(\vec y(\vec x, t; t-\Delta t), t - \Delta t)
  -
  \vec \zeta(\vec y^t_{\Delta t}(\vec x),t-\Delta t) 
  =
  \frac{\Delta t^2}{2}\left(\frac{D \vec u}{Dt} \cdot \grad\right) \vec \zeta
  +
  \mathcal{O}(\Delta t^3).
\end{equation*}
\end{lemma}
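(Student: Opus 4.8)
The plan is to treat this as a purely \emph{spatial} first-order Taylor expansion of $\vec \zeta$. The key observation is that both evaluation points carry the same time argument $t-\Delta t$, so no expansion in time is needed: the entire discrepancy comes from the spatial separation of the two departure points, which Lemma \ref{lem:characteristics} has already quantified. First I would abbreviate $\vec p := \vec y^t_{\Delta t}(\vec x)$ and $\vec q := \vec y(\vec x, t; t - \Delta t)$, and record from Lemma \ref{lem:characteristics} that
\begin{equation*}
  \vec q - \vec p = \frac{\Delta t^2}{2}\frac{D \vec u}{Dt} + \mathcal{O}(\Delta t^3),
\end{equation*}
which is in particular $\mathcal{O}(\Delta t^2)$ uniformly in $\vec x$. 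Both points lie in $\overline{\W}$ — the approximate point $\vec p$ by Lemma \ref{lem:well-defined}, and the true departure point $\vec q$ because the enclosed flow (with $\vec u \cdot \vec n = 0$) keeps fluid particles inside the domain — so $\vec \zeta$ is defined and, by the differentiability hypothesis, sufficiently smooth at each.

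Next I would Taylor expand $\vec \zeta(\,\cdot\,, t - \Delta t)$ about $\vec p$, which (componentwise, using that $\vec \zeta$ has bounded second spatial derivatives) gives
\begin{equation*}
  \vec \zeta(\vec q, t - \Delta t) - \vec \zeta(\vec p, t - \Delta t)
  = \left((\vec q - \vec p) \cdot \grad\right)\vec \zeta(\vec p, t - \Delta t) + \mathcal{O}\left(\left|\vec q - \vec p\right|^2\right).
\end{equation*}
Since $|\vec q - \vec p| = \mathcal{O}(\Delta t^2)$, the quadratic remainder is $\mathcal{O}(\Delta t^4)$ and is absorbed into the stated $\mathcal{O}(\Delta t^3)$. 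Substituting the displacement into the first-order term yields
\begin{equation*}
  \left((\vec q - \vec p) \cdot \grad\right)\vec \zeta = \frac{\Delta t^2}{2}\left(\frac{D \vec u}{Dt} \cdot \grad\right)\vec \zeta + \mathcal{O}(\Delta t^3),
\end{equation*}
the second error arising from dotting the $\mathcal{O}(\Delta t^3)$ part of $\vec q - \vec p$ against $\grad \vec \zeta$. Collecting these two displays reproduces exactly the claimed expansion.

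The step requiring the most care is the bookkeeping of evaluation points in the leading term. The factors $\frac{D \vec u}{Dt}$ and $\grad \vec \zeta$ emerge naturally at $\vec p$ (and at time $t - \Delta t$), whereas one would typically prefer them at the reference point $(\vec x, t)$; because the leading term already carries a factor $\Delta t^2$, shifting its evaluation point across the $\mathcal{O}(\Delta t)$ separation between $\vec p$ and $(\vec x, t)$ perturbs it only at order $\Delta t^3$, so every such reassignment is harmless. The rigour of the whole argument rests on $\vec \zeta$ being twice continuously differentiable with derivatives bounded uniformly on $\overline{\W}$ (this is the content of ``appropriately differentiable''), which is what makes the Taylor remainder uniform in $\vec x$ and legitimises absorbing the higher-order contributions into the error term.
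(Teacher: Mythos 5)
Your proposal is correct and takes essentially the approach the paper intends: the paper states Lemma \ref{lem:function_at_point} without a printed proof, deferring to the analysis of \cite{medeiros2021second}, and the argument is precisely your first-order spatial Taylor expansion about the approximate departure point combined with the displacement estimate of Lemma \ref{lem:characteristics}, with the quadratic remainder landing at order $\Delta t^4$. Your bookkeeping of where $\frac{D \vec u}{Dt}$ and $\grad \vec \zeta$ are evaluated, and your identification of the required uniform spatial regularity of $\vec \zeta$, also match the paper's own remark (following Lemma \ref{lem:disc_def_grad}) that this result needs spatial regularity of $\vec \zeta$.
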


\begin{lemma}[Approximation of deformation gradient {\cite[Lemma 1]{medeiros2021second}}]\label{lem:disc_def_grad}
  Suppose that the hypotheses of Lemma \ref{lem:characteristics} are satisfied.
  Then for any $\vec x \in \W$ and $t \geq \Delta t$, 
  \begin{equation*}
    \vec F(t - \Delta t, t) 
    -
    \vec F^t_{\Delta t}(\vec x)
    =
    \Delta t^2 \left[(\grad \vec u)^2 - \frac{D (\grad\vec u)}{Dt}\right] 
    + 
    \mathcal{O}(\Delta t^3).
  \end{equation*}
\end{lemma}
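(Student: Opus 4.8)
The plan is to obtain the expansion by Taylor-expanding the exact two-point tensor $\vec F(t-\Delta t, t)$ in the step size about $\Delta t = 0$ and subtracting the first-order truncation $\vec F^t_{\Delta t}(\vec x) = \vec I + \Delta t\,\grad\vec u(\vec x,t)$ recorded in \eqref{eq:approx_def_grad}. Concretely, I would regard $\Phi(\tau) := \vec F(t-\tau, t)$ as a matrix-valued function of the single scalar $\tau \ge 0$, so that the object of interest is $\Phi(\Delta t)$. The zeroth-order term is immediate from $\vec F(s,s)=\vec I$, giving $\Phi(0)=\vec I$. Throughout, the smallness hypothesis on $\Delta t$ inherited from Lemma \ref{lem:characteristics} keeps the trajectory through $(\vec x,t)$ inside $\W$, so that all quantities below are well defined and the relevant derivatives bounded.

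First I would compute $\Phi'(0)$. Differentiating $\Phi$ and applying \eqref{eq:def_grad_ode_1} from Lemma \ref{lem:evolution_of_F}, together with the chain-rule factor $\tfrac{\mathrm{d}}{\mathrm{d}\tau}(t-\tau)=-1$, gives
\begin{equation*}
  \Phi'(\tau) = \vec F(t-\tau, t)\,\grad\vec u\bigl(\vec y(t-\tau),\,t-\tau\bigr),
\end{equation*}
and hence, using $\vec y(t)=\vec x$, $\Phi'(0)=\grad\vec u(\vec x,t)$. This reproduces the linear term and confirms that $\vec F^t_{\Delta t}$ is first-order consistent.

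The crux is the quadratic coefficient $\Phi''(0)$. Writing $\Phi'(\tau)=\Phi(\tau)\,G(\tau)$ with $G(\tau):=\grad\vec u(\vec y(t-\tau),t-\tau)$ and differentiating the product once more yields $\Phi''(\tau)=\Phi'(\tau)G(\tau)+\Phi(\tau)G'(\tau)$. At $\tau=0$ the first summand contributes $\Phi'(0)G(0)=(\grad\vec u)^2$. The subtle summand is $G'(0)$: because $\grad\vec u$ is sampled at the moving point $\vec y(t-\tau)$ on the trajectory, its $\tau$-derivative is a \emph{material} derivative. Using $\partial_s\vec y(s)=\vec u(\vec y(s),s)$ and the chain rule one finds $G'(0)=-\bigl(\partial_t+\vec u\cdot\grad\bigr)\grad\vec u(\vec x,t)=-\tfrac{D(\grad\vec u)}{Dt}$, the sign coming from the argument $t-\tau$. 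Assembling, $\Phi''(0)=(\grad\vec u)^2-\tfrac{D(\grad\vec u)}{Dt}$.

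Finally, Taylor's theorem with Lagrange remainder gives $\Phi(\Delta t)=\vec I+\Delta t\,\grad\vec u+\tfrac{\Delta t^2}{2}\Phi''(0)+\mathcal{O}(\Delta t^3)$, the cubic term being controlled by $\sup_\tau\lVert\Phi'''(\tau)\rVert$ under the assumed regularity of $\vec u$; subtracting $\vec F^t_{\Delta t}(\vec x)$ then isolates the quadratic bracket plus $\mathcal{O}(\Delta t^3)$. I expect the main obstacle to be the bookkeeping in this second-order step: one must not freeze $G(\tau)$, must correctly identify the material derivative produced by differentiating along the characteristic, and must respect the non-commutativity of the product $(\grad\vec u)^2$. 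As a sanity check, this procedure yields the coefficient $\tfrac12$ in front of the quadratic bracket, so I would verify the precise constant against \cite{medeiros2021second} before finalising.
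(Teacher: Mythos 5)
Your derivation is correct, and it is worth noting that the paper itself offers no in-house proof of this lemma --- it is imported verbatim from \cite[Lemma 1]{medeiros2021second} --- so your argument is exactly the proof a reader must reconstruct: combine the characteristic ODEs of Lemma \ref{lem:evolution_of_F} with a second-order Taylor expansion of $\Phi(\tau)=\vec F(t-\tau,t)$ at $\tau=0$. Each step checks out: the chain-rule sign giving $\Phi'(\tau)=\Phi(\tau)G(\tau)$ with $G(\tau)=\grad\vec u(\vec y(t-\tau),t-\tau)$, the identification $G'(0)=-\tfrac{D(\grad\vec u)}{Dt}(\vec x,t)$ as a material derivative with the sign flipped by the argument $t-\tau$, and the non-commutative ordering in $\Phi'(0)G(0)=(\grad\vec u)^2$. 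Your closing caveat is also well judged, because your constant is right and the statement's, as printed, appears not to be: Taylor's theorem yields
\begin{equation*}
  \vec F(t-\Delta t, t) - \vec F^t_{\Delta t}(\vec x)
  =
  \frac{\Delta t^2}{2}\left[(\grad \vec u)^2 - \frac{D (\grad\vec u)}{Dt}\right]
  + \mathcal{O}(\Delta t^3),
\end{equation*}
with the factor $\tfrac12$ that the lemma omits.

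An independent cross-check confirms this. Writing $\vec a=\vec y(t-\Delta t)$ for the exact departure point, one has $\vec y(t)=\vec a+\Delta t\,\vec u(\vec a,t-\Delta t)+\tfrac{\Delta t^2}{2}\tfrac{D\vec u}{Dt}+\mathcal{O}(\Delta t^3)$, and differentiating with respect to $\vec a$, using the componentwise identity $\grad\bigl(\tfrac{D\vec u}{Dt}\bigr)=\tfrac{D(\grad\vec u)}{Dt}+(\grad\vec u)^2$, gives $\vec F(t-\Delta t,t)=\vec I+\Delta t\,\grad\vec u(\vec a,t-\Delta t)+\tfrac{\Delta t^2}{2}\bigl[(\grad\vec u)^2+\tfrac{D(\grad\vec u)}{Dt}\bigr]+\mathcal{O}(\Delta t^3)$; re-centring via $\grad\vec u(\vec a,t-\Delta t)=\grad\vec u(\vec x,t)-\Delta t\,\tfrac{D(\grad\vec u)}{Dt}+\mathcal{O}(\Delta t^2)$ reproduces your expansion exactly. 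The missing $\tfrac12$ is most plausibly a transcription slip in the statement --- note that the companion Lemmas \ref{lem:characteristics} and \ref{lem:function_at_point} both carry the factor $\tfrac{\Delta t^2}{2}$ --- and it is harmless downstream, since Theorem \ref{thm:upper_convected} uses only that the discrepancy is $\mathcal{O}(\Delta t^2)$. Your proof stands; the sharp leading coefficient is $\tfrac{\Delta t^2}{2}$, and verifying the original constant in \cite{medeiros2021second}, as you propose, is the right final step.
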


\begin{remark}
  The bounds of Lemma \ref{lem:characteristics} and Lemma
  \ref{lem:disc_def_grad} require spatial and temporal regularity of $\vec u$
  due for example to the appearance of the material derivative of $\vec u$ on
  the right hand side of Equation \eqref{eq:characteristic_result}. In addition,
  the result of Lemma \ref{lem:function_at_point} requires spatial regularity of
  $\vec \zeta$.
\end{remark}

We finally arrive at the approximation of the upper convected derivative (see {\cite[Remark 7]{medeiros2021second}}).
\begin{Theorem}\label{thm:upper_convected} Suppose that the hypotheses of Lemma
  \ref{lem:characteristics} are satisfied, and that $\vec \zeta$ is an
  appropriately differentiable tensor function. Then for any $\vec x \in \W$ and
  $t \geq \Delta t$,
  \begin{equation*}
  \overset{\triangledown}{\vec \zeta}(\vec x, t)
  =
  D_{(\vec u, \,\Delta t)}\vec \zeta(\vec x, t)
  +
  \mathcal{O}(\Delta t).
\end{equation*}
\end{Theorem}

Theorem \ref{thm:upper_convected} may be interpreted as an expansion
around $\vec x, t$ in that if follows directly that we have, at $\vec
y^t_{\Delta t}(\vec x)$, 
\begin{equation*}
  \vec F^t_{\Delta t}(\vec x)
  \vec \zeta(\vec y^t_{\Delta t}, t - \Delta t) 
  \vec F^t_{\Delta t}(\vec x)^T
  =
  \vec \zeta(\vec x, t)
  - 
  \Delta t \overset{\triangledown}{\vec \zeta}(\vec x, t)
  + 
  \mathcal{O}(\Delta t^2).
\end{equation*}
We are now in a position to state the semidiscrete scheme for the
constitutive law (\ref{eq:constitutive_law}).

\subsection{Semidiscretisation of the Oldroyd-B constitutive law}

Suppose that $\vec u(\vec x,t)$ is given for $\vec x \in \W$ and $t
\in [0,T]$. Suppose also that an initial condition $\sig^0(\vec x)$
is given. Then for $n \geq 1$ we define $\vec \Sigma^n$ by

\begin{equation}\label{eq:semidiscrete_constitutive_law}
  \begin{split}
    D_{(\vec u, \,\Delta t)}\vec \Sigma^n(\vec x)
    &=
    -
    \frac{1}{\wi}\left(\vec \Sigma^n(\vec x)-\vec I\right)
    \\
    \vec \Sigma^0(\vec x) &= \sig^0(\vec x).
  \end{split}
\end{equation}

\begin{proposition}[Consistency]
  The semidiscrete scheme given in
  \eqref{eq:semidiscrete_constitutive_law} is a consistent, first
  order approximation.
\end{proposition}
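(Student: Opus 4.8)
The plan is to estimate the local truncation error produced when the exact, sufficiently smooth solution $\sig$ of the constitutive law \eqref{eq:constitutive_law} is inserted into the semidiscrete scheme \eqref{eq:semidiscrete_constitutive_law}. First I would define the consistency error at $(\vec x, t^n)$ to be the residual
\begin{equation*}
  \vec \varrho^n(\vec x)
  :=
  D_{(\vec u, \,\Delta t)}\sig(\vec x, t^n)
  +
  \frac{1}{\wi}\left(\sig(\vec x, t^n) - \vec I\right),
\end{equation*}
and recall that the scheme is consistent of order $k$ precisely when $\vec \varrho^n = \mathcal{O}(\Delta t^k)$ uniformly in $n$.

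The central step is to apply Theorem \ref{thm:upper_convected} with $\vec \zeta = \sig$, which yields
\begin{equation*}
  D_{(\vec u, \,\Delta t)}\sig(\vec x, t^n)
  =
  \overset{\triangledown}{\sig}(\vec x, t^n)
  +
  \mathcal{O}(\Delta t).
\end{equation*}
Substituting this into $\vec \varrho^n$ and using that the exact solution satisfies the Oldroyd-B law in its Lie derivative form \eqref{eq:Oldroyd_B_Lie}, namely $\overset{\triangledown}{\sig} = -\tfrac{1}{\wi}(\sig - \vec I)$, the two $\mathcal{O}(1)$ contributions cancel identically, leaving $\vec \varrho^n = \mathcal{O}(\Delta t)$. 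This is exactly first-order consistency, so the conclusion is immediate once the theorem is in hand.

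The main obstacle is therefore not the algebra but ensuring the hypotheses of Theorem \ref{thm:upper_convected} (inherited from Lemmata \ref{lem:characteristics}--\ref{lem:disc_def_grad}) are met by the exact solution. Concretely, I would need the stepsize restriction $\Delta t \Norm{\vec u}_{C^0([0,T];\sob{1}{\infty}(\W))} < 1$ together with enough spatial and temporal regularity of both $\vec u$ and $\sig$ for the Taylor expansions underlying those lemmata to hold, and I would have to check that the implied constants are uniform in $t^n$ across $[0,T]$. Since those expansions are pointwise with constants controlled solely by the relevant norms of $\vec u$ and $\sig$, uniformity follows as long as the solution retains the required regularity throughout the time interval.
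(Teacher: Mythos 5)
Your proposal is correct and takes essentially the same route as the paper: both arguments rest on Theorem \ref{thm:upper_convected} applied to $\sig$ combined with the exact Lie-derivative form \eqref{eq:Oldroyd_B_Lie} of the constitutive law to cancel the $\mathcal{O}(1)$ terms. The only difference is bookkeeping convention: you bound the defect $\vec \varrho^n = \mathcal{O}(\Delta t)$ directly, whereas the paper rearranges the same identity into a one-step solution comparison, obtaining $\sig(\vec x, t) - \vec \Sigma^n(\vec x) = \mathcal{O}(\Delta t^2)$ after dividing by the factor $1 + \tfrac{\Delta t}{\wi}$; these are equivalent statements of first-order consistency.
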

\begin{proof}
  By Theorem \ref{thm:upper_convected}, we have the following representation:
  \begin{equation*}
    \sig(\vec x, t) 
    = 
    \Delta t \, \overset{\triangledown}{\sig}(\vec x, t)
    +
    \vec F^t_{\Delta t}(\vec x)
    \sig(\vec y^t_{\Delta t}(\vec x), t - \Delta t) 
    \vec F^t_{\Delta t}(\vec x)^T
    + 
    \mathcal{O}(\Delta t^2).
  \end{equation*}
  Then, using Equation \eqref{eq:constitutive_law}, we obtain
  \begin{equation}\label{eq:trunc_1}
    \left(1 + \frac{\Delta t}{\wi}\right)
    \sig(\vec x, t)
    =
    \vec F^t_{\Delta t}(\vec x)
    \sig(\vec y^t_{\Delta t}(\vec x), t - \Delta t)
    \vec F^t_{\Delta t}(\vec x)^T 
    - 
    \frac{\Delta t}{\wi} \vec I
    +
    \mathcal{O}(\Delta t^2).
  \end{equation}
  Assuming that $\sig(\vec y^t_{\Delta t}(\vec x), t - \Delta t)$ is
  given, one step of the semidiscrete scheme gives that $\vec
  \Sigma^n$ satisfies
  \begin{equation}\label{eq:trunc_2}
    \left(1 + \frac{\Delta t}{\wi}\right)\vec \Sigma^n(\vec x)
    =
    \vec F^t_{\Delta t}(\vec x)
    \sig(\vec y^t_{\Delta t}(\vec x), t - \Delta t)
    \vec F^t_{\Delta t}(\vec x)^T 
    - 
    \frac{\Delta t}{\wi} \vec I.
  \end{equation}
  Finally, noting that for any $\Delta t > 0$, we have $0<\tfrac{\wi}{\wi + \Delta t}<1$, so that subtracting \eqref{eq:trunc_2} from \eqref{eq:trunc_1} gives
  \begin{equation*}
    \sig(\vec x, t) - \vec \Sigma^n(\vec x) 
    = 
    \mathcal{O}(\Delta t^2),
  \end{equation*}
  and therefore the semidiscrete scheme has local truncation error proportional to $\Delta t^2$.
\end{proof}

\begin{proposition}[The positive definiteness of the conformation tensor is 
  preserved]
  \label{prop:pos_def_semi} 
\noindent Suppose that

\noindent $\vec \Sigma^{n-1}$ is given, and is positive
definite, and assume that $\vec u$ is given. Then there exists $\Delta t > 0$
such that $\vec \Sigma^n$ is positive definite.
\end{proposition}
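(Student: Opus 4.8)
The plan is to rearrange the single step of \eqref{eq:semidiscrete_constitutive_law} into an explicit update for $\vec \Sigma^n$ in terms of $\vec \Sigma^{n-1}$, and then read off positive definiteness directly. Unfolding $D_{(\vec u,\,\Delta t)}\vec \Sigma^n$ by Definition \ref{def:disc_uc} (with the role of the time $t-\Delta t$ data played by $\vec \Sigma^{n-1}$), the scheme reads
\begin{equation*}
  \frac{1}{\Delta t}\Big(\vec \Sigma^n(\vec x) - \vec F^t_{\Delta t}(\vec x)\,\vec \Sigma^{n-1}(\vec y^t_{\Delta t}(\vec x))\,\vec F^t_{\Delta t}(\vec x)^T\Big)
  =
  -\frac{1}{\wi}\big(\vec \Sigma^n(\vec x) - \vec I\big).
\end{equation*}
Collecting the $\vec \Sigma^n$ terms on the left and dividing by the positive scalar $1 + \Delta t/\wi$ gives the explicit form
\begin{equation*}
  \vec \Sigma^n(\vec x)
  =
  \frac{\wi}{\wi + \Delta t}
  \left[
    \vec F^t_{\Delta t}(\vec x)\,\vec \Sigma^{n-1}(\vec y^t_{\Delta t}(\vec x))\,\vec F^t_{\Delta t}(\vec x)^T
    +
    \frac{\Delta t}{\wi}\vec I
  \right].
\end{equation*}

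First I would fix $\Delta t$ small enough that $\Delta t\Norm{\vec u}_{C^0([0,T];\sob{1}{\infty}(\W))} < 1$, the hypothesis of Lemma \ref{lem:well-defined}. This guarantees $\vec y^t_{\Delta t}(\vec x) \in \W$ for every $\vec x \in \W$, so that $\vec \Sigma^{n-1}$ may legitimately be evaluated at the approximate departure point. It is worth emphasising that this is the \emph{only} role played by the smallness of $\Delta t$ in the statement: it makes the update well-defined, since the given datum $\vec \Sigma^{n-1}$ is prescribed only on $\W$.

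Next I would test the explicit expression against an arbitrary nonzero $\vec v \in \reals^d$. Writing $\vec w := \vec F^t_{\Delta t}(\vec x)^T \vec v$ and splitting the two contributions,
\begin{equation*}
  \vec v^T \vec \Sigma^n(\vec x)\,\vec v
  =
  \frac{\wi}{\wi + \Delta t}
  \left(
    \vec w^T \vec \Sigma^{n-1}(\vec y^t_{\Delta t}(\vec x))\,\vec w
    +
    \frac{\Delta t}{\wi}\,|\vec v|^2
  \right).
\end{equation*}
The first term is nonnegative because $\vec \Sigma^{n-1}$ is positive definite, while the second is strictly positive since $\vec v \neq 0$ and $\Delta t, \wi > 0$; the prefactor is positive, so $\vec v^T \vec \Sigma^n(\vec x)\,\vec v > 0$. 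Symmetry of $\vec \Sigma^n$ is inherited from that of $\vec \Sigma^{n-1}$ through the congruence and the symmetric shift, so $\vec \Sigma^n(\vec x)$ is positive definite, which completes the argument.

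The step requiring the most care is a subtlety rather than a genuine obstacle: the congruence term $\vec F^t_{\Delta t}\,\vec \Sigma^{n-1}\,\vec F^t_{\Delta t}{}^T$ is in general only positive \emph{semi}definite, since $\vec F^t_{\Delta t} = \vec I + \Delta t\,\grad \vec u$ may be singular at the chosen $\Delta t$ (its kernel produces $\vec w = 0$). Strict positive definiteness is therefore rescued entirely by the additive shift $\tfrac{\Delta t}{\wi}\vec I$ arising from the relaxation term, and the proof should make explicit that this is what distinguishes the present situation from a bare congruence. One could instead shrink $\Delta t$ further so that $\Delta t\Norm{\grad \vec u}_{C^0([0,T];\sob{1}{\infty}(\W))} < 1$ renders $\vec F^t_{\Delta t}$ invertible and the congruence term positive definite on its own, but the shift argument is cleaner and imposes no constraint on $\Delta t$ beyond that already needed for Lemma \ref{lem:well-defined}.
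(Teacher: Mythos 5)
Your proposal is correct, and it follows the same overall route as the paper: rearrange one step of \eqref{eq:semidiscrete_constitutive_law} into the explicit update \eqref{eq:update_step} and read off positive definiteness of $\vec \Sigma^n$ as a positively scaled sum of two nonnegative terms. The one genuine difference is where the strict positivity comes from. The paper restricts $\Delta t$ so that $\vec F^t_{\Delta t}$ is invertible (citing the Gershgorin circle theorem, with the stated sufficient condition $\Delta t \leq \tfrac 1 2 \Norm{\vec u}_{1,\infty}$, which as written appears to be a typo for $\Delta t \Norm{\vec u}_{1,\infty} \leq \tfrac 1 2$), so that $(\vec F^t_{\Delta t})^T\vec \xi \neq 0$ and the congruence term is itself strictly positive definite; the identity shift then only adds to it. You instead treat the congruence term as merely positive semidefinite --- valid even if $\vec F^t_{\Delta t}$ is singular --- and extract strict positivity entirely from the relaxation shift $\tfrac{\Delta t}{\wi}\vec I$, so the only smallness requirement on $\Delta t$ is the one from Lemma \ref{lem:well-defined} ensuring $\vec y^t_{\Delta t}(\vec x) \in \W$, which is needed in both arguments (the paper leaves it implicit) for the evaluation of $\vec \Sigma^{n-1}$ to make sense. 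Your variant is slightly more economical and robust, dispensing with the Gershgorin step and any extra time-step restriction; the paper's version buys the marginally stronger fact that both summands in \eqref{eq:update_step} are positive definite, and in practice the invertibility of $\vec F^t_{\Delta t}$ comes for free under the same condition $\Delta t \Norm{\vec u}_{C^0([0,T];\sob{1}{\infty}(\W))} < 1$ anyway. Your closing remark correctly identifies this trade-off, and your observation that symmetry of $\vec \Sigma^n$ is inherited through the congruence and the symmetric shift is a point the paper's proof omits.
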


\begin{proof}
We can write
\eqref{eq:semidiscrete_constitutive_law} as 
  \begin{equation}\label{eq:update_step}
    \left(1 + \frac{\Delta t}{\wi}\right)\vec \Sigma^n(\vec x)
    =
    \vec F^t_{\Delta t}(\vec x)
    \vec \Sigma^{n-1}(\vec y^t_{\Delta t}(\vec x))
    \vec F^t_{\Delta t}(\vec x)^T
    +
    \frac{\Delta t}{\wi} \vec I.
  \end{equation}
  Thus, as long as $\Delta t$ is chosen small enough, a sufficient
  condition would be $\Delta t \leq \tfrac 1 2 \Norm{\vec u}_{1,
    \infty}$ for example, $\vec F^t_{\Delta t}$ is invertible by the
  Gershgorin circle theorem \cite[Thm 7.2.1]{GolubVan-Loan:2013}. Let
  $\vec 0 \neq \vec \xi \in \reals^d$. Then since $\vec F^t_{\Delta
    t}$ is invertible, the product $(\vec F^t_{\Delta t})^T \vec \xi$
  is a nonzero vector in $\reals^d$, so that by hypothesis
  \begin{equation*}
    0 < \vec \xi^T \vec F^t_{\Delta t} 
    \vec \Sigma^{n-1}(\vec y^t_{\Delta t}(\vec x)) 
    (\vec F^t_{\Delta t})^T \vec \xi.
  \end{equation*}
  Then from \eqref{eq:update_step}, $\vec \Sigma^n(\vec x)$ is a sum of two positive definite terms.
\end{proof}

\section{Spatial discretisation}\label{sec:numerical_schemes}

In this section we present numerical methods for the Oldroyd-B fluid in the case
$\operatorname{Re} = 0$. The problem is solved in a decoupled manner, in which
at each time step a Stokes problem is solved for the fluid velocity, which is
then used as a forcing in the time-dependent constitutive relation.  This avoids
the need to solve a very large monolithic system and therefore keeps
computational costs down. We first give a finite element method to solve the
Stokes problem and discuss well-posedness. We then present the fully discrete
constitutive law and discuss the solution procedure for the full problem.

\subsection{Finite element discretisation for the Stokes problem}

We consider $\T{}$ to be a conforming triangulation of $\W$, namely,
$\T{}$ is a finite family of sets such that
\begin{enumerate}
\item $K\in\T{}$ implies $K$ is an open box (i.e. a quadrilateral or
hexahedron),
\item for any $K,J\in\T{}$ we have that $\closure K\cap\closure J$ is a full
  lower-dimensional box (i.e., it is either $\emptyset$, a vertex,
  an edge (or face) or the whole of $\closure K$ and $\closure J$) of both
  $\closure K$ and $\closure J$ and
\item $\bigcup_{K\in\T{}}\closure K=\closure\W$.
\end{enumerate}
Further, we define $h: \W\to \reals$ to be the {piecewise constant}
\emph{meshsize function} of $\T{}$ given by
\begin{equation*}
  h(\vec{x}):=\max_{\closure K\ni \vec{x}}\text{diam} (K).
\end{equation*}

With $\mathbb Q_1(K)$ the space of bilinear polynomials over a quadrilateral (or
trilinear over a hexahedron), we introduce the
\emph{finite element space}
\begin{equation*}
  \mathbb{Q}_1 := \{\phi \in \sobh1(\W) : \phi|_K \in\mathbb Q_1(K)\},
\end{equation*}
to be the usual space of continuous piecewise bilinear functions.  Let
the vertices of the triangulation $\T{}$ be denoted $\vec x_i$.  For a
vertex $\vec x_i$, we define
\begin{equation*}
  \hat{\vec x}_i 
  :=
  \{K \in \T{} : \vec x_i \in \bar{K}\}.
\end{equation*} 

To facilitate strong imposition of boundary conditions for the velocity, we introduce finite element spaces 
\begin{equation*}
  \fes_{\vec w} := \{\vec v \in\mathbb Q_1^d : \vec v \mid_{\partial \W} = \Pi_1 \vec w\}
\end{equation*} 
\begin{equation*}
  \fes_{\vec 0} := \{\vec v \in\mathbb Q_1^d : \vec v \mid_{\partial \W} = \vec 0\}.
\end{equation*} 

To solve the Stokes problem, we employ the stabilised equal-order
approximation introduced in \cite{dohrmann2004stabilized}. The
stabilisation makes use of the $\leb 2$ projection operator $\Pi_0:
\mathbb{Q}_1 \to \mathbb{P}_0$ that maps the finite element space to a
space of discontinuous functions which are constant on each
element. The construction of the stabilisation term is therefore local
to elements. In addition, this scheme performs well on graded meshes,
which are a useful tool due to the sharp boundary layers that occur in
viscoelastic fluid flows (see \S\ref{sec:mesh_design}). A more
detailed discussion and analysis for this scheme can be found in
\cite{bochev2006stabilization,dohrmann2004stabilized,elman2014finite}. Let
$\vec \Sigma_h^{0}$ be a given interpolant of $\vec \Sigma^0$. Then
the discrete problem for the velocity and pressure is to find $\vec
u_h \in \fes_{\vec w}, p_h \in \mathbb{Q}_1$ such that
\begin{equation}\label{eq:velocity_stabilised_galerkin}
    2 \beta \int_{\Omega} \symgrad{\vec u_h} : \symgrad{\vec v_h}
    - 
    \int_{\Omega}p_h \operatorname{div}\vec v_h 
    - 
    \int_{\Omega}q_h \operatorname{div}\vec u_h
    -
    \int_{\Omega}(p_h - \Pi_0 p_h)(q_h - \Pi_0 q_h)
    =    
    \frac{\beta - 1}{\operatorname{Wi}} 
    \int_{\Omega}{\vec \Sigma_h^{0} : \symgrad{\vec v_h}}
\end{equation}
for all $\vec v_h \in \fes_{\vec 0}, q_h \in \mathbb{Q}_1$. 
\begin{proposition}[Inf-sup stability {\cite{bochev2006stabilization}}]
  Let
  \begin{equation}
    \label{eq:disstokes}
    B_h\qp{(\vec u_h, p_h),(\vec v_h, q_h)}
    :=
    2 \beta \int_{\Omega} \symgrad{\vec u_h} : \symgrad{\vec v_h}
    - 
    \int_{\Omega}p_h \operatorname{div}\vec v_h 
    - 
    \int_{\Omega}q_h \operatorname{div}\vec u_h
    -
    \int_{\Omega}(p_h - \Pi_0 p_h)(q_h - \Pi_0 q_h).
  \end{equation}
  Then, there exists a $C>0$ independent of $h$ such that
  \begin{equation}
    \label{eq:infsup}
    \sup_{\vec v_h \in \mathbb{Q}_1^d, q_h \in \mathbb{Q}_1 }
    \frac{B_h\qp{(\vec u_h, p_h),(\vec v_h, q_h)}}
         {\Norm{\vec v_h}_{\sobh1(\W)} + \Norm{q_h}_{\leb{2}(\W)}}
         \geq
         C
         \qp{\Norm{\vec u_h}_{\sobh1(\W)} + \Norm{p_h}_{\leb{2}(\W)}}       
         \Foreach \vec u_h \in \mathbb{Q}_1^d, p_h \in \mathbb{Q}_1.
  \end{equation}
\end{proposition}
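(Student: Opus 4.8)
The plan is to establish the inf-sup bound constructively: for each pair $(\vec u_h, p_h)$ I will exhibit a single test pair $(\vec v_h, q_h)$ of comparable norm for which $B_h$ is bounded below by $\Norm{\vec u_h}_{\sobh1(\W)}^2 + \Norm{p_h}_{\leb{2}(\W)}^2$. The functionally relevant setting is $\vec u_h \in \fes_{\vec 0}$ (the boundary data being imposed strongly) and $p_h$ in the mean-zero subspace of $\mathbb{Q}_1$ (matching the pressure normalisation fixed earlier). On $\fes_{\vec 0}$ Korn's first inequality supplies $\Norm{\symgrad{\vec u_h}}_{\leb{2}(\W)} \geq c_K\Norm{\vec u_h}_{\sobh1(\W)}$, and for mean-zero $p_h$ the continuous inf-sup condition for the Stokes problem furnishes a velocity field realising the divergence, both of which I treat as standard.

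The test pair will be $(\vec v_h, q_h) = (\vec u_h + \delta\,\mathcal{I}_h\vec v_p,\, -p_h)$, with $\delta>0$ fixed at the very end. First I would compute the diagonal contribution: with $(\vec u_h, -p_h)$ the two pressure–divergence terms cancel while the stabilisation term flips sign, yielding
\[ B_h((\vec u_h, p_h),(\vec u_h, -p_h)) = 2\beta\Norm{\symgrad{\vec u_h}}_{\leb{2}(\W)}^2 + \Norm{p_h - \Pi_0 p_h}_{\leb{2}(\W)}^2, \]
so that Korn controls $\Norm{\vec u_h}_{\sobh1(\W)}$ and the fluctuation $\Norm{p_h - \Pi_0 p_h}_{\leb{2}(\W)}$, but not yet the full pressure norm. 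To capture the latter I would invoke the continuous inf-sup to choose $\vec v_p \in \sobh1(\W)^d$ vanishing on $\partial\W$ with $\operatorname{div}\vec v_p = -p_h$ and $\Norm{\vec v_p}_{\sobh1(\W)} \leq C\Norm{p_h}_{\leb{2}(\W)}$, and take its Scott–Zhang interpolant $\mathcal{I}_h\vec v_p \in \fes_{\vec 0}$, which is $\sobh1$-stable and obeys $\Norm{\vec v_p - \mathcal{I}_h\vec v_p}_{\leb{2}(K)} \leq C h_K\Norm{\vec v_p}_{\sobh1(\omega_K)}$ on each element $K$ with patch $\omega_K$.

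The crux — and the step I expect to be the main obstacle — is bounding the consistency error $\int_{\Omega} p_h\operatorname{div}(\vec v_p - \mathcal{I}_h\vec v_p)$ using only $\sobh1$ regularity of $\vec v_p$; a naive use of interpolation would demand a second derivative that is unavailable. The resolution exploits that $p_h \in \mathbb{Q}_1 \subset \sobh1(\W)$ is continuous and that $\vec v_p - \mathcal{I}_h\vec v_p$ vanishes on $\partial\W$: integrating by parts elementwise, the interelement jumps and the boundary contributions cancel, leaving $\int_{\Omega} p_h\operatorname{div}(\vec v_p - \mathcal{I}_h\vec v_p) = -\int_{\Omega}\grad p_h\cdot(\vec v_p - \mathcal{I}_h\vec v_p)$. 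Because $\grad p_h = \grad(p_h - \Pi_0 p_h)$ elementwise, an inverse inequality gives $h_K\Norm{\grad p_h}_{\leb{2}(K)} \leq C\Norm{p_h - \Pi_0 p_h}_{\leb{2}(K)}$; combining with the interpolation estimate and Cauchy–Schwarz yields the decisive bound
\[ \left|\int_{\Omega} p_h\operatorname{div}(\vec v_p - \mathcal{I}_h\vec v_p)\right| \leq C\Norm{p_h - \Pi_0 p_h}_{\leb{2}(\W)}\Norm{\vec v_p}_{\sobh1(\W)} \leq C\Norm{p_h - \Pi_0 p_h}_{\leb{2}(\W)}\Norm{p_h}_{\leb{2}(\W)}. \]
This is precisely the mechanism by which the Dohrmann–Bochev stabilisation compensates for the inf-sup deficiency of the equal-order pair.

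Finally I would assemble the pieces. Testing with $(\mathcal{I}_h\vec v_p, 0)$ and using the previous bound together with $\sobh1$-stability of $\mathcal{I}_h$ for the viscous cross term gives
\[ B_h((\vec u_h, p_h),(\mathcal{I}_h\vec v_p, 0)) \geq \Norm{p_h}_{\leb{2}(\W)}^2 - C\Norm{p_h - \Pi_0 p_h}_{\leb{2}(\W)}\Norm{p_h}_{\leb{2}(\W)} - C\Norm{\vec u_h}_{\sobh1(\W)}\Norm{p_h}_{\leb{2}(\W)}. \]
Adding $\delta$ times this to the diagonal identity and absorbing the three cross terms with Young's inequality, then choosing $\delta$ small enough that the coefficients of $\Norm{\vec u_h}_{\sobh1(\W)}^2$ and $\Norm{p_h - \Pi_0 p_h}_{\leb{2}(\W)}^2$ stay positive, produces
\[ B_h((\vec u_h, p_h),(\vec u_h + \delta\,\mathcal{I}_h\vec v_p, -p_h)) \geq c_0\left(\Norm{\vec u_h}_{\sobh1(\W)}^2 + \Norm{p_h}_{\leb{2}(\W)}^2\right). \]
Since $\Norm{\vec u_h + \delta\,\mathcal{I}_h\vec v_p}_{\sobh1(\W)} + \Norm{p_h}_{\leb{2}(\W)} \leq C'(\Norm{\vec u_h}_{\sobh1(\W)} + \Norm{p_h}_{\leb{2}(\W)})$ by the stability of $\mathcal{I}_h$ and the bound on $\vec v_p$, dividing through and using $a^2 + b^2 \geq \tfrac12(a+b)^2$ delivers the stated inequality with $C = c_0/(2C')$.
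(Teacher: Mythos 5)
Your proof is correct, and note that the paper does not actually prove this proposition: it is imported from \cite{bochev2006stabilization}, and what you have written is essentially a reconstruction of the argument from that reference. The three ingredients --- the diagonal identity $B_h\qp{(\vec u_h,p_h),(\vec u_h,-p_h)} = 2\beta\Norm{\symgrad{\vec u_h}}_{\leb{2}(\W)}^2 + \Norm{p_h-\Pi_0 p_h}_{\leb{2}(\W)}^2$, a divergence-realising field $\vec v_p$ from the continuous inf-sup condition together with a stable quasi-interpolant, and the conversion of the consistency error into a fluctuation term via integration by parts (valid globally, since $p_h\in\sobh1(\W)$ and $\vec v_p-\mathcal{I}_h\vec v_p$ vanishes on $\partial\W$, so your elementwise detour is unnecessary though harmless) combined with the elementwise inverse estimate $h_K\Norm{\grad p_h}_{\leb{2}(K)}\leq C\Norm{p_h-\Pi_0 p_h}_{\leb{2}(K)}$ --- are exactly the mechanism by which stability of the Dohrmann--Bochev pair is established, and your bookkeeping (bilinearity in the second argument, signs, Young's inequality, the choice of $\delta$, the final $a^2+b^2\geq\tfrac12(a+b)^2$ step) is sound.

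Two remarks. First, your opening restriction is not cosmetic: as literally quantified over all of $\mathbb{Q}_1^d\times\mathbb{Q}_1$, the stated inequality is false --- take $\vec u_h$ a nonzero constant field and $p_h=0$, so that $\symgrad{\vec u_h}=0$ and $\operatorname{div}\vec u_h=0$ make $B_h$ vanish identically while the right-hand side does not; likewise constant pressures are invisible to test velocities vanishing on $\partial\W$. Korn's first inequality and the Bogovskii construction both require precisely the restriction you impose (velocities in $\fes_{\vec 0}$, mean-zero pressures), which is the setting the paper actually works in given its strong Dirichlet data and zero-mean pressure constraint, so the proposition must be read that way; you were right to flag this rather than gloss over it. Second, your inverse and interpolation estimates presuppose shape-regular meshes (on an anisotropic element the inverse inequality pairs $\Norm{\grad p_h}_{\leb{2}(K)}$ with the smallest dimension of $K$, not $\operatorname{diam}K$), so the claimed $h$-independence of $C$ does not automatically extend to the strongly graded, anisotropic meshes $\mathcal{R}_N$ on which the paper later computes --- a caveat shared by the cited reference, but worth stating when the result is invoked there.
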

\noindent
The inf-sup condition guarantees that problem \eqref{eq:velocity_stabilised_galerkin} is well-posed.

\subsection{A finite difference approximation of the upper convected time derivative}

The discretisation of the constitutive law requires several definitions relating
to approximations of characteristics and deformation gradients, which we now
give. We write $\vec u_h^n$ for the discrete solution at time
$t^n$.

\begin{remark}
  If we assume $\vec u\in \sob1\infty(\W)\cap \sobh2(\W)$ then standard
  arguments can be used to infer the discrete velocity $\vec u_h^n \in
  \sob{1}{\infty}(\W)$. Also since $\vec u_h^n \cdot \vec n = 0$ on
  $\partial \W$ we have, in light of Lemma \ref{lem:well-defined}, the
  approximation \eqref{eq:fully_discrete_ucd} is also well-defined.
\end{remark}

\begin{definition}
Let $\vec u_h^n \in \mathbb{Q}_1^d$ be given. For any $\vec x \in \W$ and $n
\geq 1$, define the \emph{discrete departure point} of $\vec x$ according to the
velocity $\vec u_h^n$ as 
\begin{equation}\label{eq:discrete_departure_point}
  \vec y_{h,\Delta t}^{n}(\vec x)
  =
  \vec x - \Delta t \vec u_h^n(\vec x).
\end{equation}
We also define the discrete deformation gradient between time $(n-1)\Delta t$
and $n \Delta t$ as 
\begin{equation*}
  \vec F_{h,\Delta t}^n  
  =
  \vec I + \Delta t \grad \vec u_h^n(\vec x).
\end{equation*}
\end{definition}

\begin{definition}[Fully discrete approximate upper convected time derivative]
  Let $\vec u_h^n$ be given, and suppose that $\vec \zeta^{n-1},\vec \zeta^{n}$ are tensor
  valued functions representing the the value of a time dependent tensor field at
  the time levels $n-1,n$. Then the fully discrete approximation to the upper convected time derivative of $\vec \zeta$ at time level $n$ is given by
  \begin{equation}\label{eq:fully_discrete_ucd}
    D^h_{(\vec u_h^n,\, \Delta t)}\vec \zeta^n(\vec x)
    :=
    \frac{1}{\Delta t}\Big(\vec \zeta^n(\vec x) - \vec F_{h,\Delta t}^n(\vec x) \vec \zeta^{n-1}(\vec y^n_{h, \Delta t})\vec F_{h,\Delta t}^n(\vec x)^T \Big).
  \end{equation}
\end{definition}

\subsection{A fully discrete scheme for the Stokes flow of an Oldroyd-B fluid}

We are now in a position to define the full finite difference discretisation of
the constitutive law. Values of the discrete conformation tensor are stored at
the vertices $\vec x_i$. At any time level $k$, these values are sufficient to
uniquely define a function $\vec \Sigma^k_h \in \mathbb{Q}_1^{d(d+1) \slash 2}$
by bilinear (or trilinear) interpolation. This representation is used to
evaluate the conformation tensor at points which are not vertices, and is the
function on the right hand side of the discrete Stokes equation
\eqref{eq:velocity_stabilised_galerkin}. By a slight abuse of notation, we will write $\vec \Sigma_h^k$ for both the set of point values at vertices, and the function in $\mathbb{Q}_1^{d(d+1) \slash 2}$ obtained by interpolation using these values.

We note that the approximation of the deformation gradient needed to evaluate
\eqref{eq:fully_discrete_ucd} requires the value of the discrete velocity
gradient, which we wish to use at the vertices of the mesh. Since the discrete
velocity lies in $\mathbb{Q}_1^d$, the point values of its gradient are not well
defined. For a vertex $\vec x_i$ of the mesh and a function $\vec v_h \in
\mathbb{Q}_1^d$, in place of the velocity gradient, we therefore use the local average over $\hat{\vec x}_i$:
\begin{equation*}
  \grad \vec v_h (\vec x_i) 
  =
  \frac{1}{|\hat{\vec x}_i|}\sum_{K \subseteq  \hat{\vec x}_i}
  \int_{K} \grad \vec v_h.
\end{equation*}

The fully discrete problem is given as follows. Let $\vec \Sigma^0_h$ be an interpolant of the initial condition. Then for $1 \leq n \leq N$, find $\vec u^n_h \in \fes_{\vec w}$, $p_h^n \in \mathbb{Q}_1$ such that  

\begin{equation}\label{eq:finite_element_update}
  B_h\qp{(\vec u^n_h, p^n_h),(\vec v_h, q_h)}
  =
  \frac{\beta - 1}{\operatorname{Wi}} 
  \int_{\Omega}{\vec \Sigma^{n-1}_h : \symgrad{\vec v_h}}
\end{equation}
for all $\vec v_h \in \fes_{\vec 0}, q_h \in \mathbb{Q}_1$, then $\vec \Sigma_h^n$ is given by

\begin{equation}\label{eq:finite_difference_update}
  D^h_{(\vec u_h^n,\, \Delta t)}\vec \Sigma_h^n(\vec x_i)
  +
  \frac{1}{\wi}(\vec \Sigma^n_h(\vec x_i) - \vec I)
  =
  0.
\end{equation}

At each mesh point, \eqref{eq:finite_difference_update} is a coupled system of
ordinary differential equations forced by the discrete fluid velocity via the
deformation gradient, and approximated in an explicit manner, and therefore the
nodal values may be updated without the need to solve a linear system.

\begin{remark}[Choice of spatial discretisation points for finite difference
  scheme] We note that since all derivatives appearing in the constitutive
  relation are approximated via \eqref{eq:lie_derivative_approx}, we are free to
  choose any set of points at which to approximate the conformation tensor, with
  the proviso that care must be taken to adequately resolve the velocity
  gradient, as this is what forces the constitutive law. The choice of these
  points so that they correspond with the degrees of freedom of a piecewise
  bilinear finite element space is a natural one, as it is then clear how to
  include the discrete conformation tensor in the momentum equation. Other possible choices are discussed in \S\ref{sec:discuss}.
\end{remark}

\begin{proposition}\label{prop:pos_def_disc} 
  If the initial condition $\sig^0$ is positive definite, and bilinear interpolation is used to obtain $\vec \Sigma_h^0$ and to evaluate $\vec \Sigma_h^k$ at points which are not vertices of the mesh, then at each time step there exists $\Delta t > 0$ such that the discrete conformation tensor remains positive definite.
\end{proposition}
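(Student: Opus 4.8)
The plan is to mirror the argument of Proposition \ref{prop:pos_def_semi}, promoted to the fully discrete setting by an induction on the time level $n$, with the one genuinely new ingredient being that bilinear interpolation preserves positive definiteness. First I would rewrite the nodal update \eqref{eq:finite_difference_update}, using the definition \eqref{eq:fully_discrete_ucd} of $D^h_{(\vec u_h^n,\,\Delta t)}$, in the congruence-plus-shift form
\begin{equation*}
  \left(1 + \frac{\Delta t}{\wi}\right)\vec \Sigma_h^n(\vec x_i)
  =
  \vec F_{h,\Delta t}^n(\vec x_i)\,
  \vec \Sigma_h^{n-1}\big(\vec y^n_{h,\Delta t}(\vec x_i)\big)\,
  \vec F_{h,\Delta t}^n(\vec x_i)^T
  +
  \frac{\Delta t}{\wi}\vec I,
\end{equation*}
which is the exact fully discrete analogue of \eqref{eq:update_step}.

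The key observation is that on each box $K$ the $\mathbb Q_1$ shape functions are nonnegative and form a partition of unity, so for any $\vec z \in \W$ the interpolated value satisfies $\vec \Sigma_h^{n-1}(\vec z) = \sum_j w_j(\vec z)\,\vec \Sigma_h^{n-1}(\vec x_j)$, where the sum runs over the vertices $\vec x_j$ of the box containing $\vec z$ and $w_j(\vec z) \geq 0$ with $\sum_j w_j(\vec z) = 1$. Then for any $\vec 0 \neq \vec \xi \in \reals^d$ we have $\vec \xi^T \vec \Sigma_h^{n-1}(\vec z)\vec \xi = \sum_j w_j(\vec z)\,\vec \xi^T \vec \Sigma_h^{n-1}(\vec x_j)\vec \xi > 0$ whenever every nodal value is positive definite, since each summand is nonnegative and at least one weight is strictly positive. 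Hence \emph{positive definiteness of the nodal values implies positive definiteness of the bilinear interpolant at every point of $\W$}.

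For the base case, this observation applied to $\vec \Sigma_h^0$, whose nodal values agree with the positive definite $\sig^0$, shows $\vec \Sigma_h^0$ is positive definite throughout $\W$. For the inductive step, suppose the nodal values of $\vec \Sigma_h^{n-1}$ are positive definite. Imposing the CFL-type bound $\Delta t\,\Norm{\vec u_h^n}_{\sob{1}{\infty}(\W)}<1$ and recalling $\vec u_h^n \cdot \vec n = 0$ on $\partial \W$, Lemma \ref{lem:well-defined} guarantees the discrete departure point $\vec y^n_{h,\Delta t}(\vec x_i)$ lies in $\W$, so $\vec \Sigma_h^{n-1}(\vec y^n_{h,\Delta t}(\vec x_i))$ is defined and, by the observation above, positive definite. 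Shrinking $\Delta t$ further if necessary so that $\vec F_{h,\Delta t}^n(\vec x_i) = \vec I + \Delta t\,\grad \vec u_h^n(\vec x_i)$ is invertible (Gershgorin circle theorem, exactly as in Proposition \ref{prop:pos_def_semi}), the congruence $\vec F_{h,\Delta t}^n\,\vec \Sigma_h^{n-1}(\cdot)\,(\vec F_{h,\Delta t}^n)^T$ is positive definite; adding the positive definite term $\tfrac{\Delta t}{\wi}\vec I$ and dividing by the positive scalar $1 + \tfrac{\Delta t}{\wi}$ preserves this, so $\vec \Sigma_h^n(\vec x_i)$ is positive definite at every vertex, closing the induction.

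The only step genuinely different from the semidiscrete proof is controlling the conformation tensor at the departure point, which is generically not a mesh vertex. The crux is therefore the positivity-preservation of the interpolation operator, and this is precisely why the statement is restricted to bilinear interpolation: the nonnegativity of the $\mathbb Q_1$ shape functions is essential, whereas a higher-order or non-monotone interpolant could produce negative weights and destroy positive definiteness even when all nodal values are positive definite.
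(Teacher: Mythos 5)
Your proposal is correct and follows essentially the same route as the paper's proof: the paper likewise argues that bilinear interpolation, being a weighted sum of nodal values with non-negative weights, preserves positive definiteness, and then invokes the congruence-plus-shift argument of Proposition \ref{prop:pos_def_semi} (invertibility of $\vec F^n_{h,\Delta t}$ via Gershgorin, positivity of the added $\tfrac{\Delta t}{\wi}\vec I$ term) for the nodal update. Your version merely makes explicit the induction on $n$, the partition-of-unity justification of the interpolation step, and the appeal to Lemma \ref{lem:well-defined} for the departure point lying in $\W$ (which the paper handles in a preceding remark), all of which the paper leaves compressed.
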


\begin{proof}
  By assumption, all nodal values of $\vec \Sigma_h^{n-1}$ are positive
  definite. Then since interpolated values are weighted sums of the nodal values
  with non-negative weights, the interpolant is also positive definite. By the
  same argument as in Proposition \ref{prop:pos_def_semi}, if we choose $\Delta
  t \leq \tfrac 1 2 \Norm{\vec u_h^n}_{1, \infty}$, and if $\vec \Sigma_h^{n-1}$
  is positive definite, then $\vec \Sigma_h^n$ computed from the scheme
  \eqref{eq:finite_difference_update} will also be positive definite.
\end{proof}

\section{Numerical Experiments}\label{sec:numerics}

In this section, we present numerical results obtained by solving \eqref{eq:finite_element_update}-\eqref{eq:finite_difference_update} for a variety of model and discretisation parameters. All
simulations presented here were conducted using \texttt{deal.II}, an open source
C\texttt{++} software library providing tools for finite element computations
\cite{dealii2019design}. 

\subsection{Description of the test case}

The lid-driven cavity consists of a fluid-filled, impenetrable box in which the
flow is driven by a moving lid. The flow of a Newtonian fluid in a lid-driven
cavity is by now a very well studied problem. It is known that the flow
characteristics, see Figure \ref{fig:lidcartoon}, depend upon the Reynolds
number. Studies from the numerical literature largely agree with one another
qualitatively and quantitatively for moderate Reynolds numbers. For larger
Reynolds numbers, numerical solutions become more challenging, and differences
appear in computational results. Detailed numerical studies have provided
benchmarks for Reynolds numbers in the tens of thousands
\cite{bruneau20062d,ghia1982high}.

In experimental studies, for small Reynolds number, flow in the cavity
reaches a steady state, and, in a three dimensional cavity, remains
approximately two dimensional. For the setup in Figure
\ref{fig:lidcartoon}, i.e., with lid moving left to right, as the
Reynolds number increases the centre of the main vortex moves to the
right. Above a critical value of the Reynolds number, the flow
transitions to a time-periodic state with variation in the neutral
direction \cite{aidun1991global}. Even for inertialess non-Newtonian
flows, the symmetry of the flow is broken (see Figure
\ref{fig:lidcartoon}, right) and moves towards the upper left corner
with increasing Weissenberg number \cite{pakdel1997cavity}.

\begin{figure}
  \begin{center}
    \begin{tikzpicture}[scale=1,>=Stealth]
\draw[thick] (0,0) rectangle (4,4);
\draw[thick,->] (0,4) -- (1.5,4);
\draw[thick,->] (2.5,4) -- (4,4);
\node at (2,4.3) {Moving Lid};
\coordinate (center) at (2,2);
\draw[->] ([shift=(0:1cm)]center) arc (0:-270:1cm);
\draw[->] ([shift=(0:1.5cm)]center) arc (0:-270:1.5cm);
\node at (2,0.2) {Clockwise Circular Flow};
\end{tikzpicture}
    \includegraphics[width=0.32\textwidth,trim=0 2.2cm 0 0]{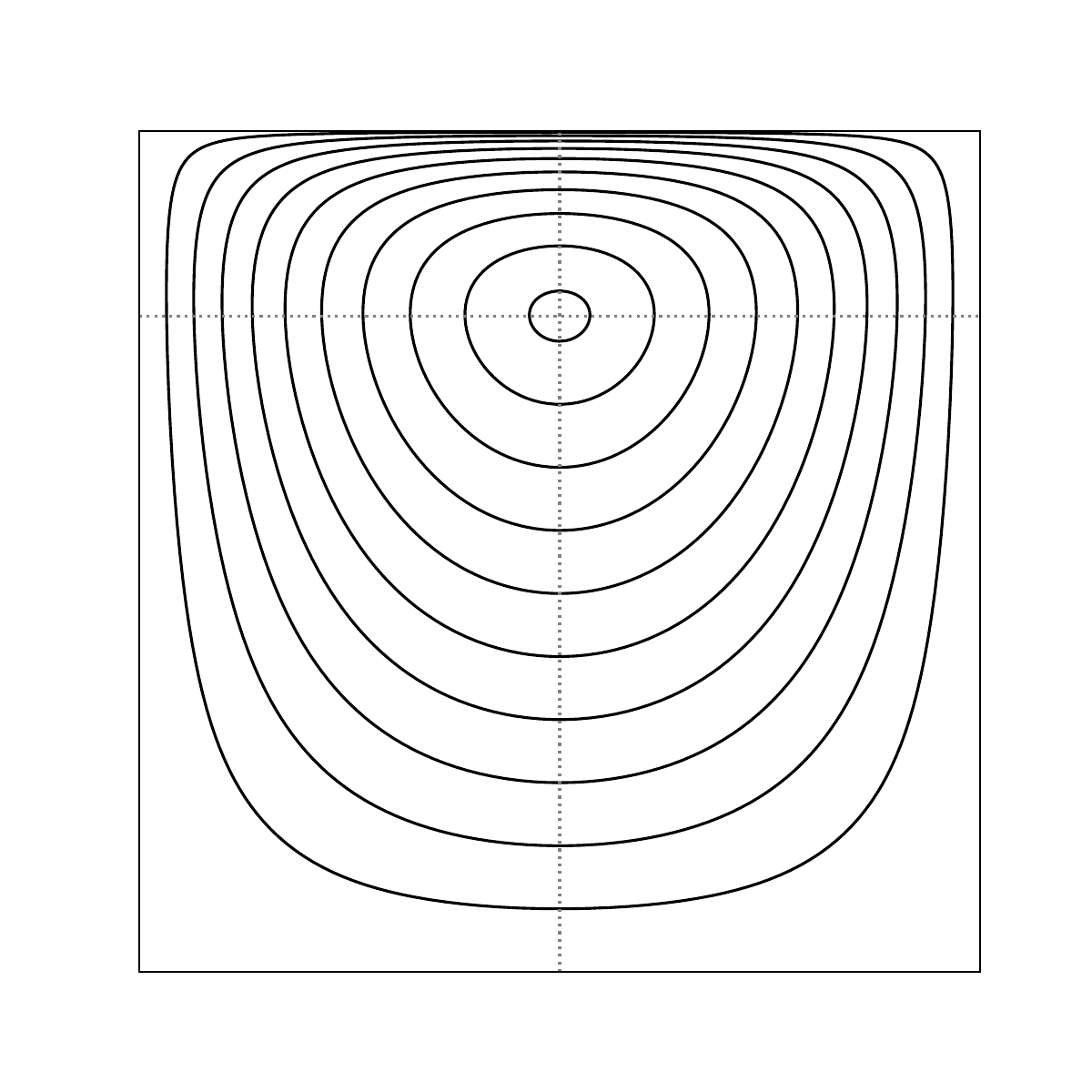}
    \includegraphics[width=0.32\textwidth,trim=0 2.2cm 0 0]{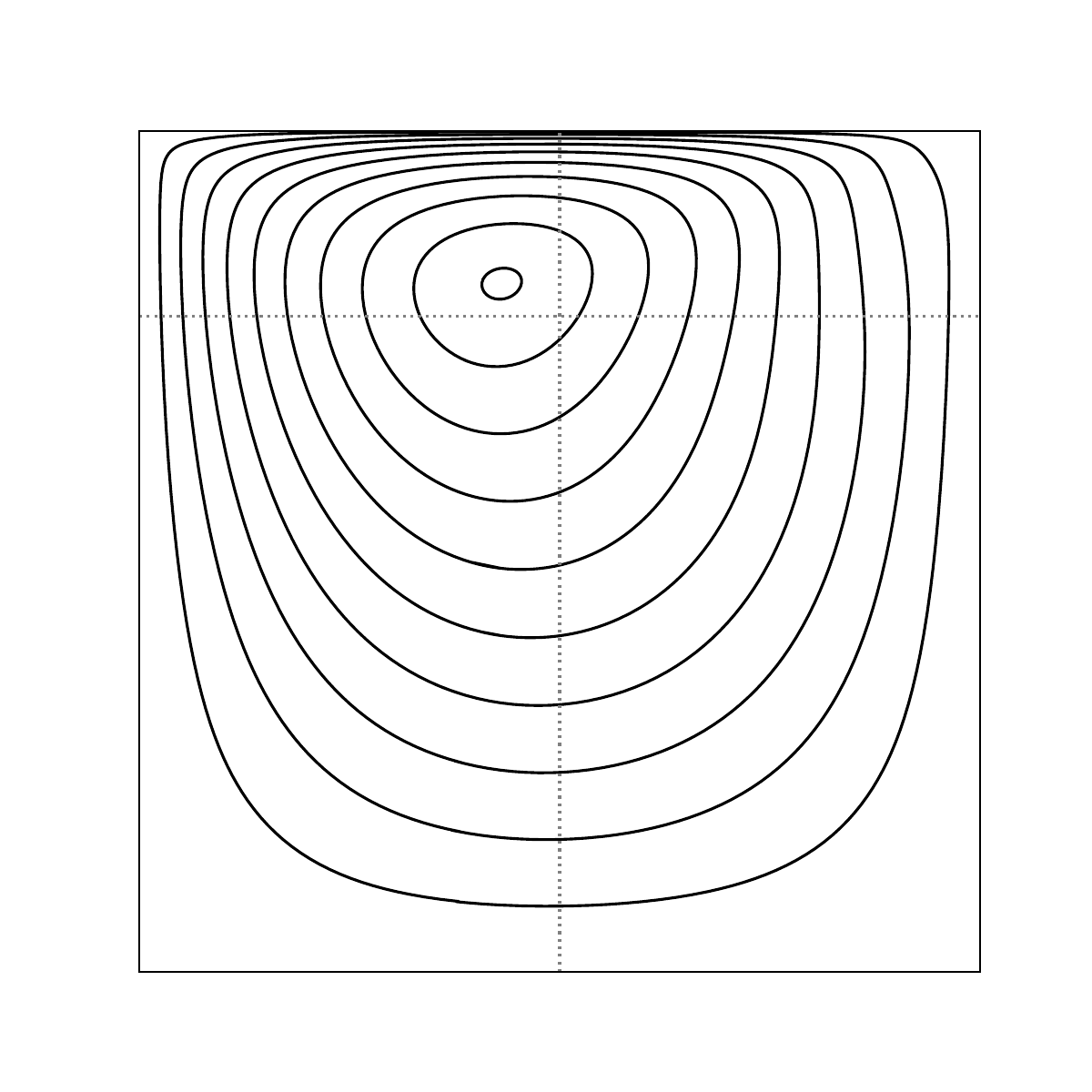}
  \end{center}
  \caption{An illustration of a lid driven cavity setup (left). Streamlines of a Newtonian Stokes flow (middle) with left-right symmetry in the velocity magnitude. Introducing a polymeric stress breaks this symmetry, with the centre of circulation moving up and to the left with increasing $\wi$ (right).
    \label{fig:lidcartoon}}
\end{figure}
In this work, the regularised lid-driven cavity problem was solved,
for which $\W = [0,1] \times [0,1]$, $\vec u = (0,0)^T$ on the lateral and lower
boundaries, and on the lid (i.e. $\{(x_1,x_2) : x_2 = 1\}$), the velocity
boundary condition is
\begin{equation*}
  \vec u_{lid}(\vec x, t)
  =
  8\left(1 + \tanh\left(8\left(t - \tfrac 1 2\right)\right)
  x_1^2(1-x_1^2),0\right)^T.
\end{equation*}
\begin{remark}
  
For this problem, a Weissenberg number can be defined as
\begin{equation*}
  \wi 
  =
  \frac{\lambda U}{H},
\end{equation*}
where $\lambda$ is a characteristic relaxation time of the fluid, $U$ is a characteristic velocity scale and $H$ is the height of the cavity. Thus, $U\slash H$ is a measure of the shear rate \cite{pakdel1997cavity}. For the setup described above, $U=1$, $H=1$.
\end{remark}

\subsection{Mesh design}\label{sec:mesh_design}

Mesh design and selection of discretisation parameters appears to be of crucial
importance for the cavity flow of an Oldroyd-B fluid, and indeed very few
studies have reported satisfactory numerical results for the lid-driven cavity
using a uniform mesh, with the notable exception of schemes which utilise the
log-conformation tensor formulation. In addition, results for some schemes have
been shown to be very sensitive to mesh size, for example in
\cite{boyaval2010lid}, dissipation properties, positive definiteness of the
conformation tensor and existence of steady state change qualitatively upon
varying the mesh size. 

For the cavity problem, graded meshes of various types are a common choice.
Meshes constructed in this manner are able to provide very fine resolution near
the boundary (particularly the lid), where it is most needed, in an efficient
manner. The situation may be compared to singularly perturbed
convection-diffusion problems, which exhibit boundary layers that put severe
limitations on the mesh size. To make computations tractable, layer-adapted
meshes have been extensively researched in this community (see
\cite{linss2003layer} for a review). A particularly effective example is the
Bakhvalov-type mesh, where upper bounds on the derivatives of the solution near
the boundary are used to design mesh grading functions that are sufficient to
resolve it.

In this work, we will use a mixture of the following. One construction consists
of decreasing element size towards the boundary with a constant contraction
ratio $\gamma$, i.e. $h_{i+1} = \gamma h_i$ where the index $i$ is such that
$i+1$ is the next element in the direction of the boundary
\cite{comminal2015robust,sousa2016lid}. Another is to decrease element size by a
constant amount $c$ so that $h_{i+1} = h_i - c$ \cite{pan2009simulation}. For
comparison, the latter is equivalent to decreasing the contraction ratio, so
that elements size decreases very rapidly near the boundary. Finally, in
\cite{habla2014numerical}, mixtures of uniform grid spacing and refined areas
are used. 

\begin{figure}
  \centering
  \includegraphics[width=0.4\textwidth]{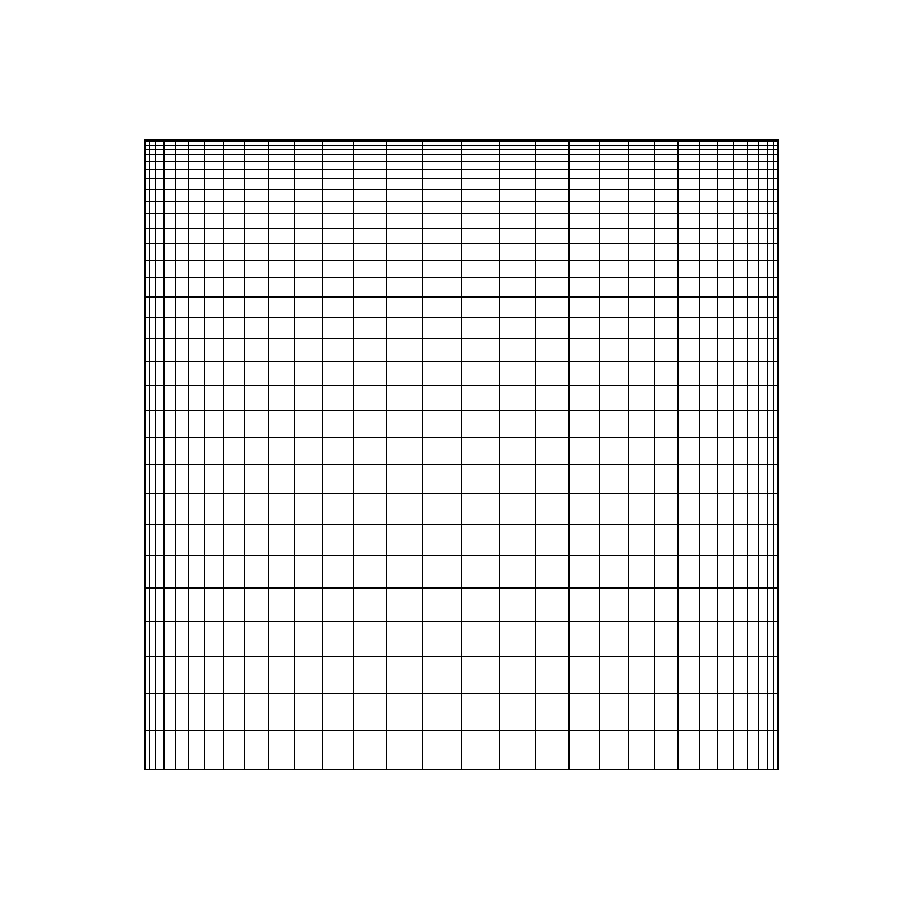}
  \includegraphics[width=0.4\textwidth]{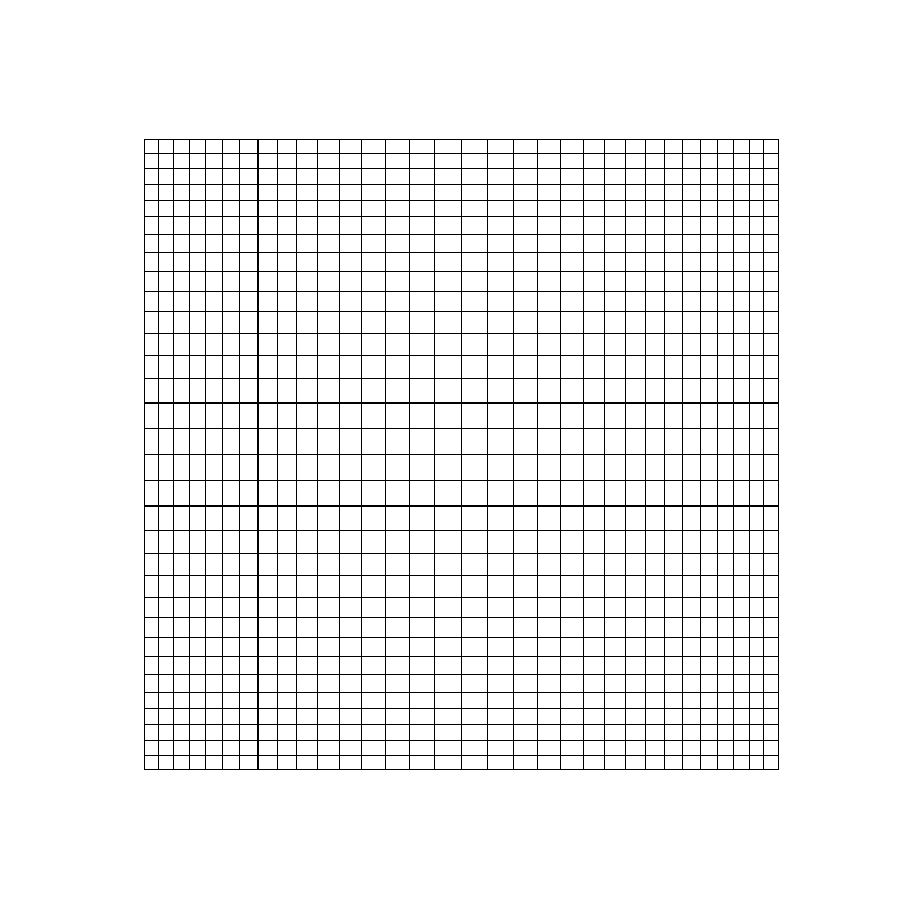} \\
  \includegraphics[width=0.25\textwidth]{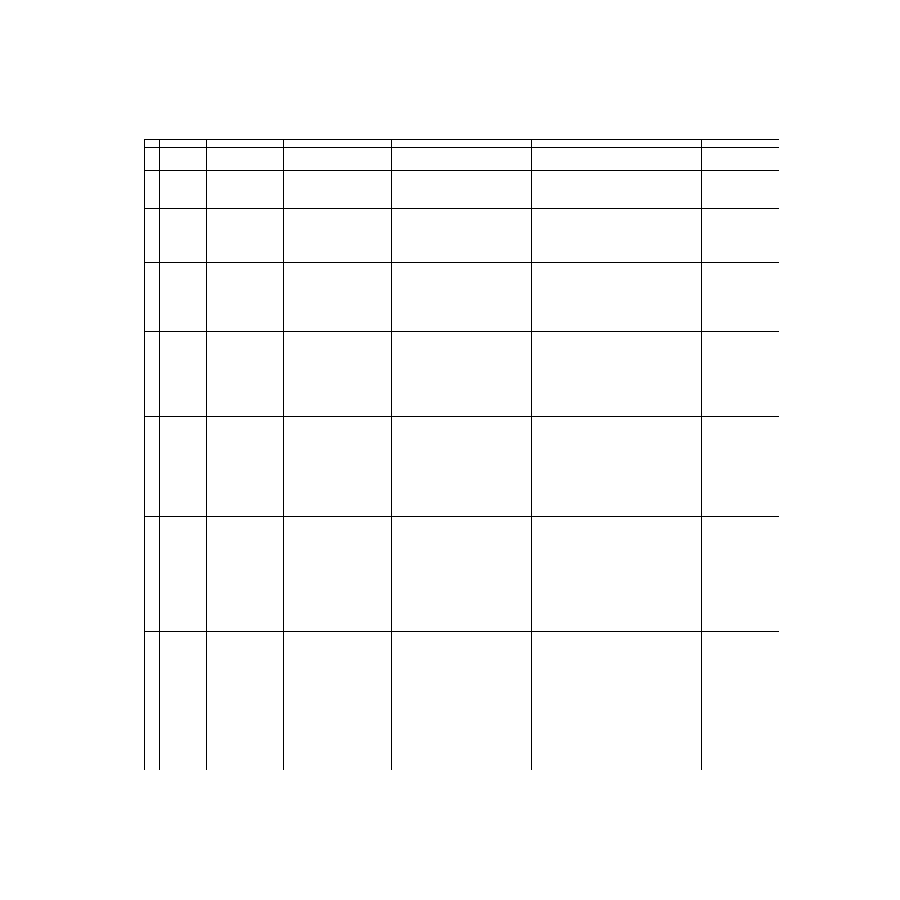} \hfil
  \includegraphics[width=0.25\textwidth]{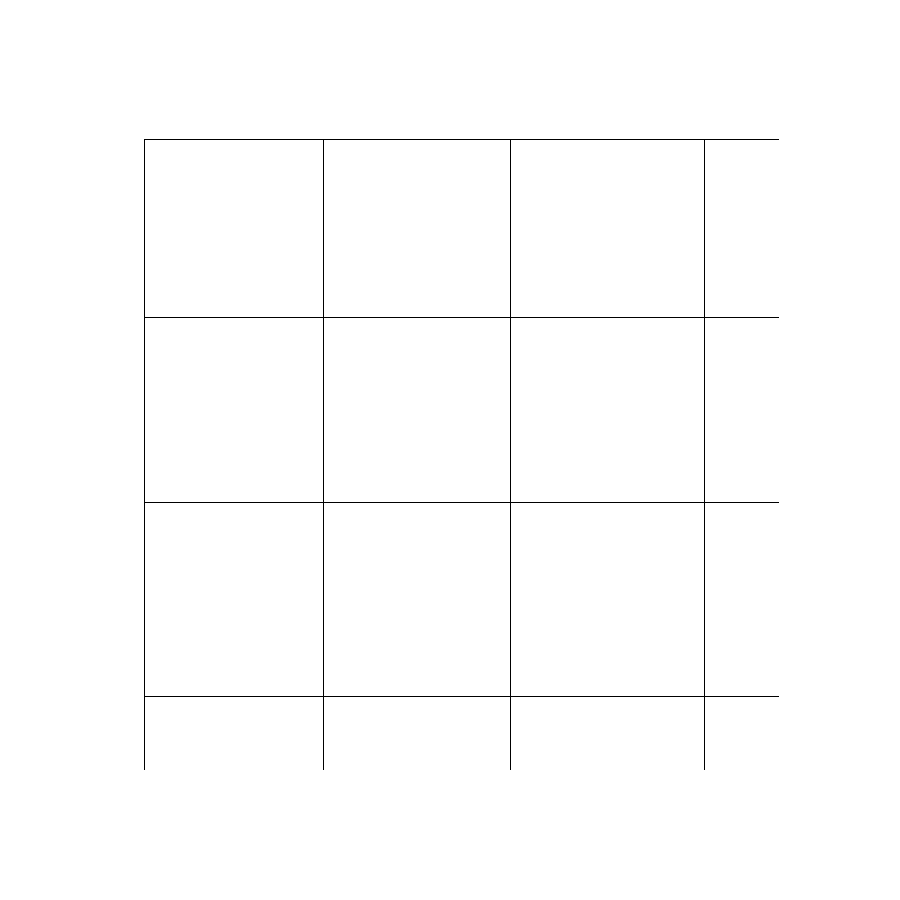}
  \caption{\label{fig:mesh_designs} Illustrations of the two types of graded
  mesh used. Both meshes consist of 1024 elements. Left: mesh
  $\mathcal{R}_{32}$, offering aggressive refinement near the boundary. Right:
  $\T{}_{32}$. Bottom row: zoom of upper left corners showing an area of
  $0.08 \times 0.08$ to illustrate the differences in grading.}
\end{figure}

Meshes constructed from central square elements with a constant reduction ratio
of 0.96 towards each boundary are denoted by $\T{}_N$, where $N$ is the
number of elements in each coordinate direction. We denote by $\mathcal{R}_N$
meshes constructed with linearly decreasing element size as follows (similar
to those used in \cite{pan2009simulation}). We define vertices for an $N \times
N$ quadrilateral mesh by their $x$ and $y$ coordinates, given respectively by
$x_i = 2 \left(\tfrac{i}{N}\right)^2$ for $i=0,1,...,\tfrac N 2$, with $x_i = 1
- x_{N-i}$ for $i = \tfrac N 2 +1, ..., N$, and $y_i = 1 - \left(1 - \tfrac i
N\right)^2$ for $i=0,...,N$. On these meshes, the finest vertical resolution
occurs at the lid, with an element height of $\tfrac{1}{N^2}$.

This means the meshes are highly anisotropic away from the
corners. Stabilised finite element methods often have stabilisation
parameters that depend badly on the anisotropy of the elements,
however the solver we used (\ref{eq:velocity_stabilised_galerkin}) is
parameter free.

\begin{table}
  \caption{Mesh statistics}
\begin{tabular}{ p{3cm}p{3cm}p{3cm}p{3cm} }
  \hline\hline
  Mesh & \# elements  & $h_{\text{min}}$ & $h_{\text{max}}$\\
  \hline
  $\T{}_{90}$   & 8,100  & 0.0039 & 0.024\\
  $\T{}_{120}$   & 14,400 & 0.0020 & 0.022 \\
  $\T{}_{150}$   & 22,500 & 0.0010 & 0.021 \\
  $\T{}_{180}$   & 32,400 & 0.00054 & 0.021 \\
  $\mathcal{R}_{256}$   & 65,536 & $ 1.5\times10^{-5}$ & $7.8 \times 10^{-3}$ \\
  \hline
\end{tabular}
\label{table:mesh_statistics}
\end{table}

\subsection{Numerical results}

Where possible, the results presented here are compared with the literature,
particularly \cite{fattal2005time,pan2009simulation,sousa2016lid} - see Table
\ref{table:comparison_metrics}. Descriptions of the meshes used are given in
\S\ref{sec:mesh_design}, and a summary of the key statistics of each is given in
Table \ref{table:mesh_statistics}. The data for comparison are the logarithm of the maximum value of $\sig_11$ along the midline, $x = 0.5$, the maximum value of $\sig_11$ over the domain, and the coordinates of the centre of the main vortex, $x_c, y_c$.

\subsubsection{\(\wi = 0.5\)}

The solution appears to have reached a steady state by $t = 8$. In Figure
\ref{fig:cross_sections_wi_05_fd}, components of the conformation tensor are
plotted along cross sections of the domain for $\wi = 0.5$ and for various
computational meshes described in \S\ref{sec:mesh_design} and Table
\ref{table:mesh_statistics}. The solutions obtained for $\sig_{11}$ and
$\sig_{22}$ agree qualitatively very well with Figure 5 of
\cite{pan2009simulation}, albeit with differences in magnitude along the lid. We
note that the meshes used there are finer, which could account for the
difference. Other metrics available for comparison from the numerical literature
(see Table 4 in \cite{sousa2016lid} and Table \ref{table:comparison_metrics})
are the centre of the main vortex in the velocity field and the logarithm of the
maximum value of $\sig_{11}$ attained along the line $x=0.5$. Computed on mesh
$\mathcal{R}_{256}$, the former agrees very closely (within 1\%) although not
much variation in prediction of this quantity is observed across the numerical
literature. The latter is approximately 2\% larger than the other predictions
given in \cite{sousa2016lid}. Resolution of the layer at the top boundary
appears to be of critical importance for the accuracy of the approximation.
Under-resolution results in altered behaviour of $\sig_{12}$ at the upper
boundary. In addition, coarse meshes appear to over-estimate $\sig_{11}$. These
errors do not appear to pollute the solution away from the boundary.

\begin{figure}
  \includegraphics[width=0.33\textwidth]{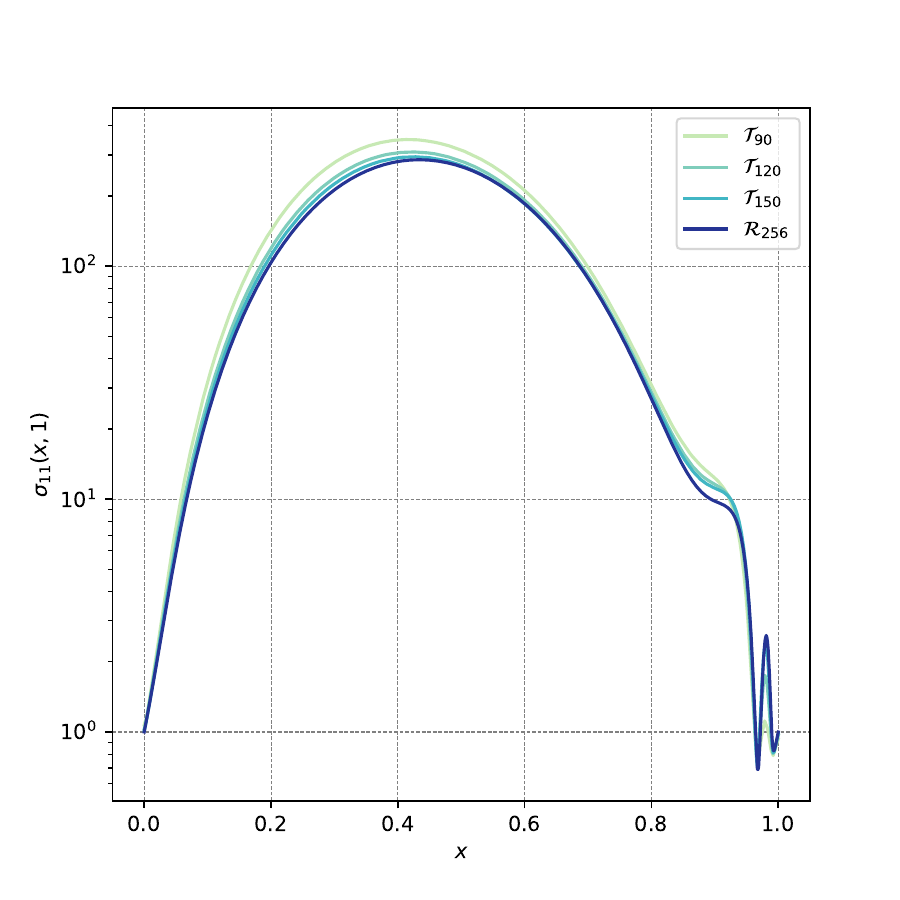}
  \includegraphics[width=0.33\textwidth]{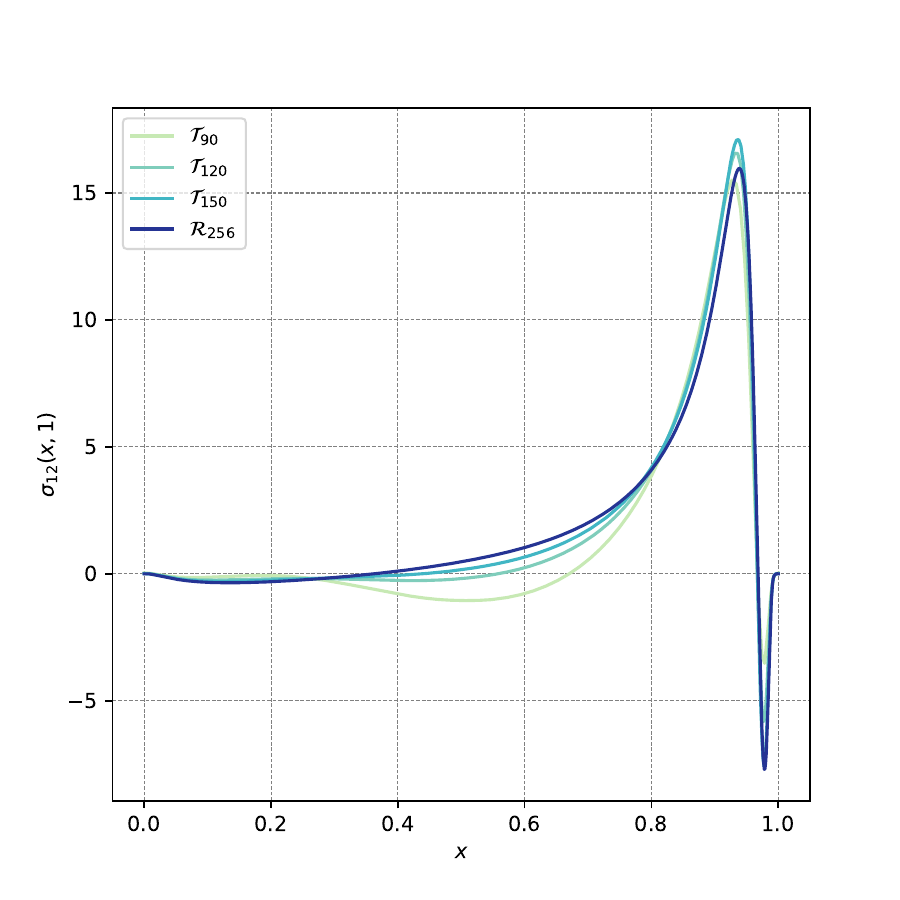} 
  \includegraphics[width=0.33\textwidth]{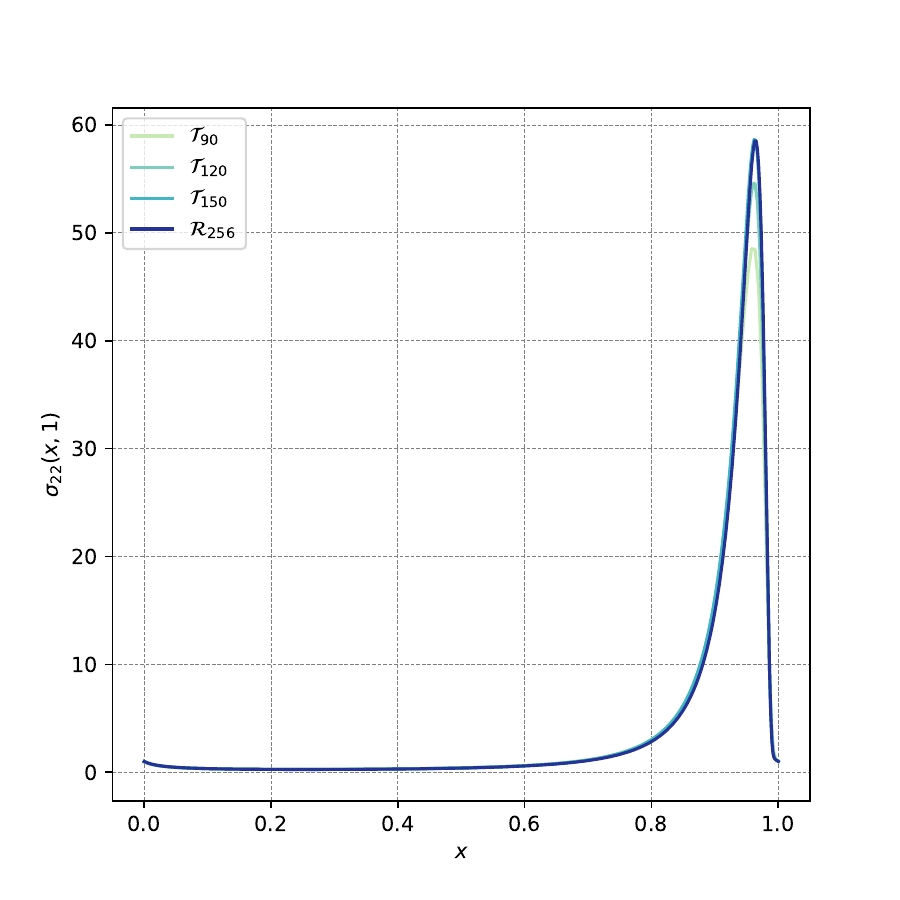} \\
  \includegraphics[width=0.33\textwidth]{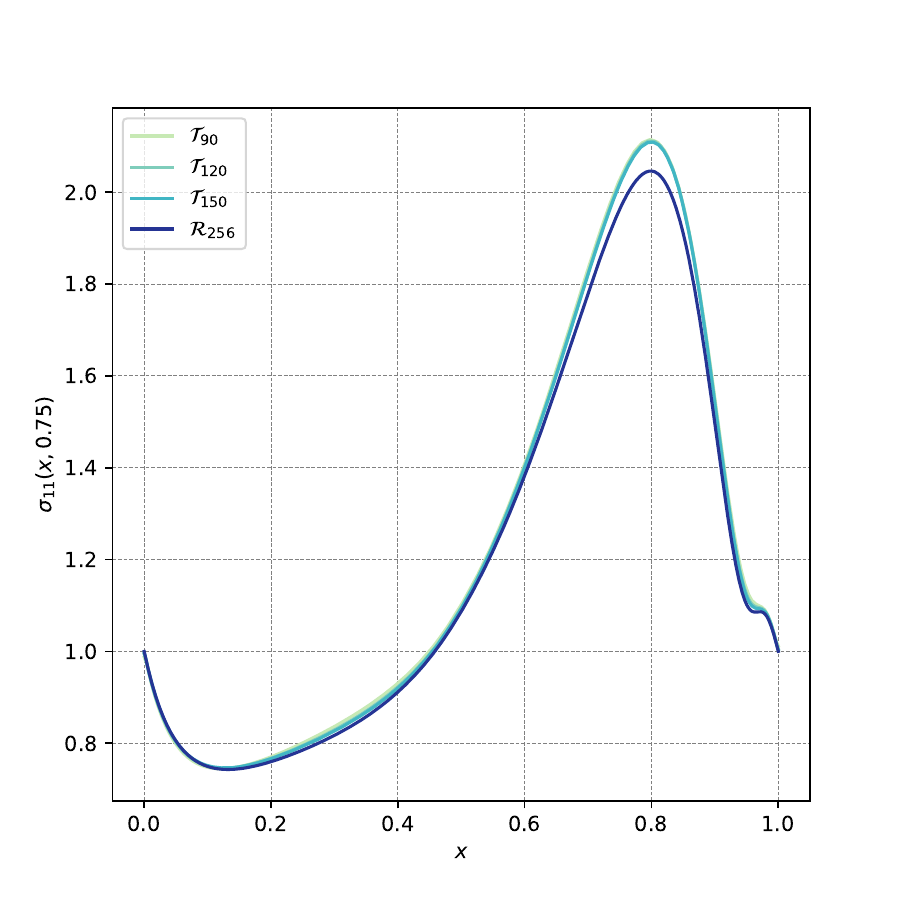}
  \includegraphics[width=0.33\textwidth]{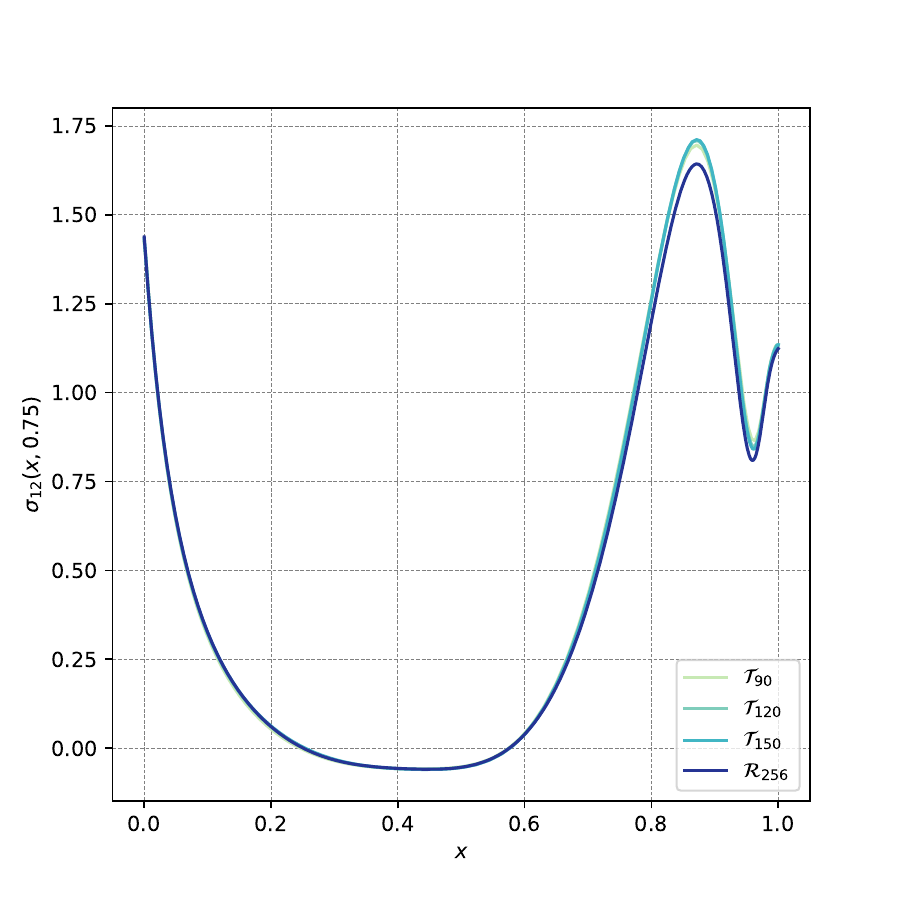} 
  \includegraphics[width=0.33\textwidth]{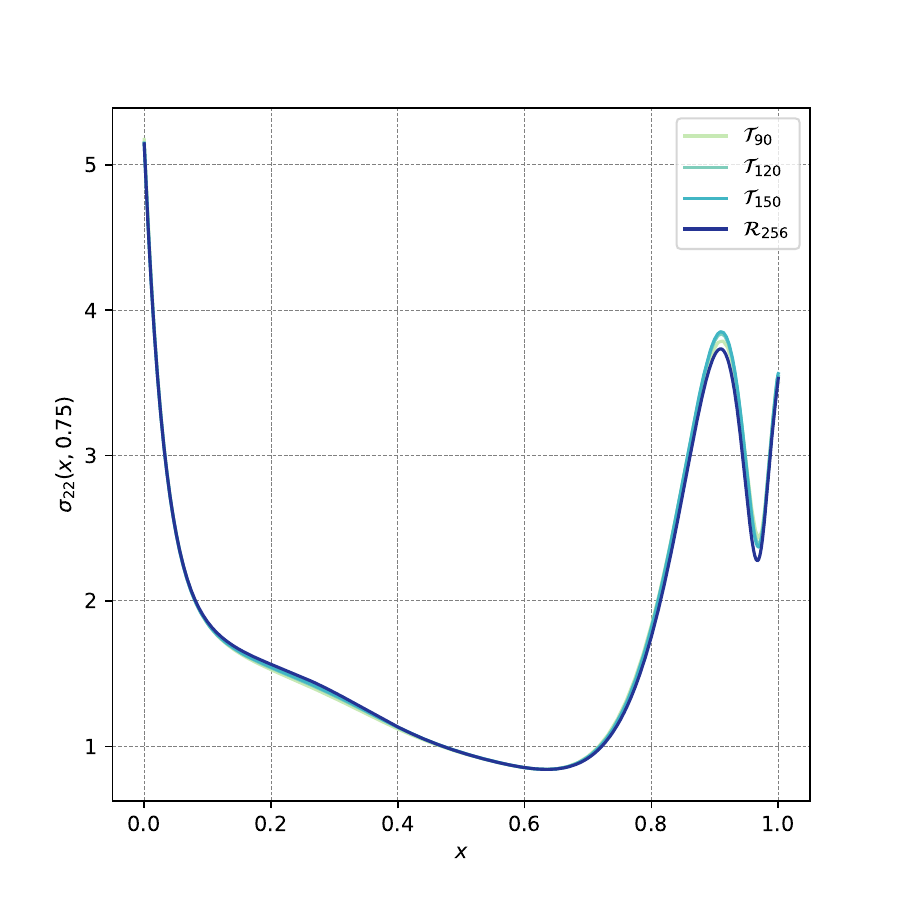} \\
  \includegraphics[width=0.33\textwidth]{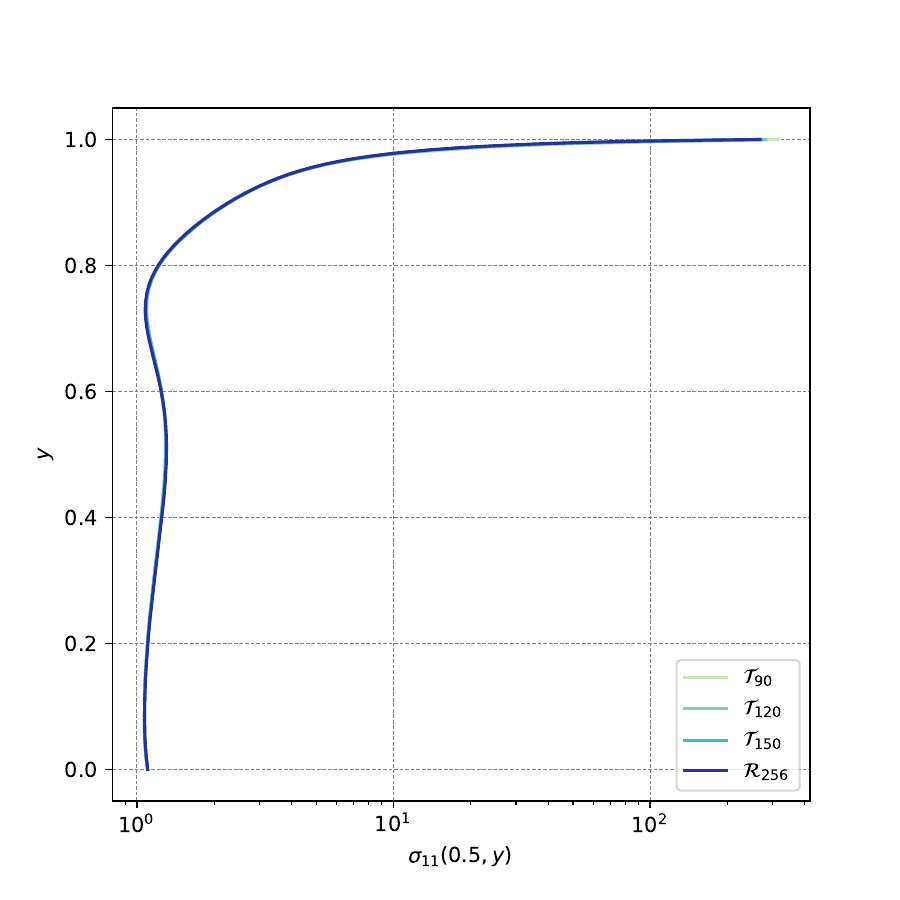}
  \includegraphics[width=0.33\textwidth]{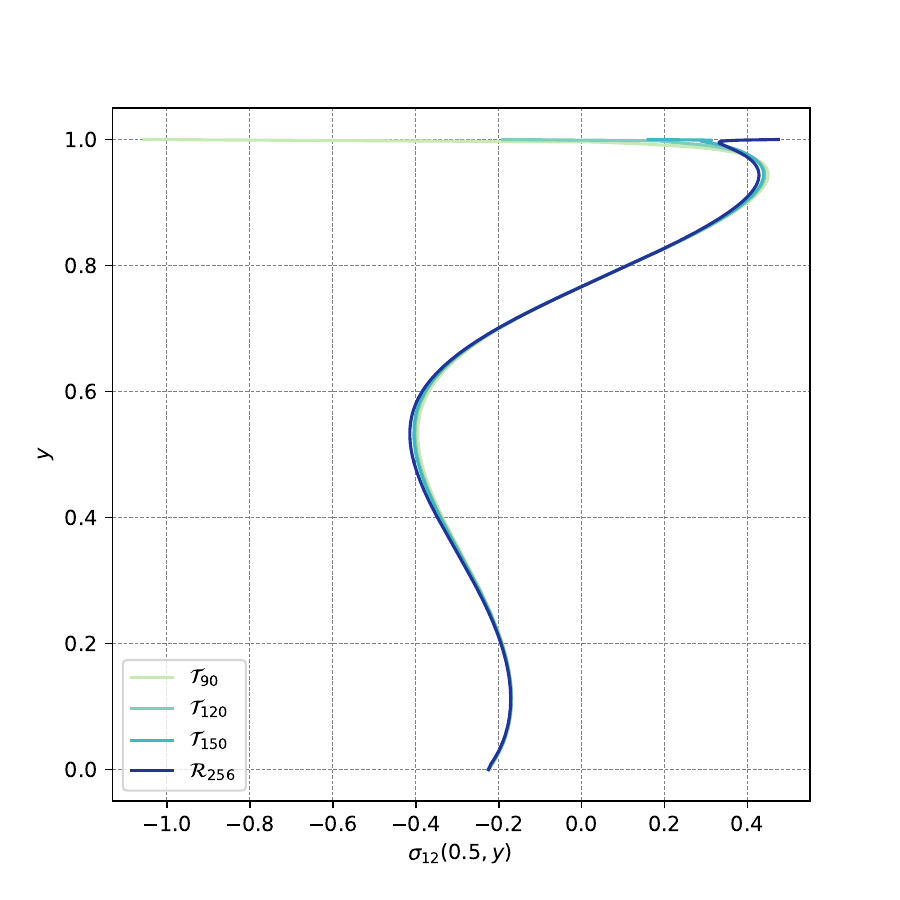} 
  \includegraphics[width=0.33\textwidth]{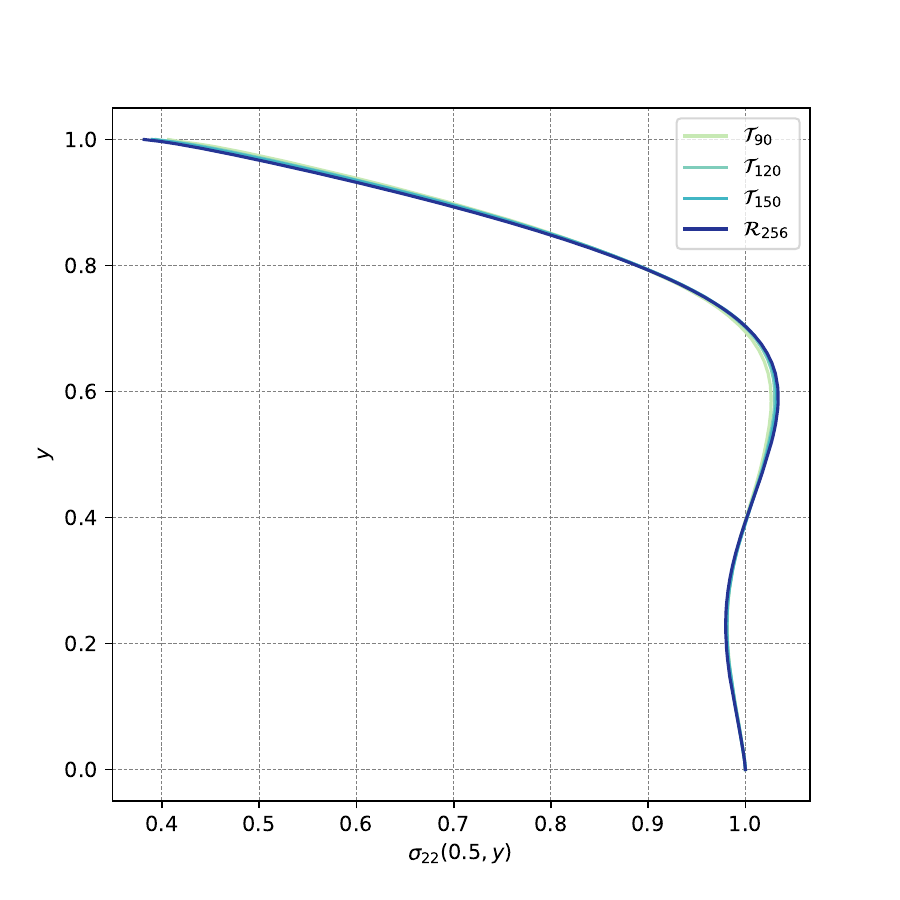}
  \caption{\label{fig:cross_sections_wi_05_fd} Components of the conformation
  tensor plotted along cross sections of the domain, obtained on meshes
  $\T{}_{90}$, $\T{}_{120}$, $\T{}_{150}$ and
  $\mathcal{R}_{256}$ with $\wi = 0.5$ and $t=10$. Top row: plots over the top
  boundary given by $y = 1$. Middle row: plots over the line given by $y=0.75$.
  Bottom row: plots over the midline given by $x=0.5$. }
\end{figure}

\begin{table}
  \caption{Comparison of results with published data for the (regularised)
  lid-driven cavity, $\beta = 0.5$. Value marked $^*$ was estimated from
  \cite{pan2009simulation} using their predicted relationship between this
  quantity and $\wi$ (see \cite[\S 4.3]{pan2009simulation}), while values marked
  $^\dagger$ were estimated using a plot digitisation tool \cite{plotdigitizer}.
  All other values were reported directly. All values from the current work were
  taken after a steady state had been reached (approx $ t = 10$ for $\wi = 0.5$
  and $t=30$ for $\wi = 1$).}
\begin{tabular}{ p{4cm}p{3cm}p{3cm}p{3cm} }
  \hline\hline
  Reference &$\underset{x = 0.5}{\max}\ln(\sig_{11})$  & $\max \sig_{11}$ &
  $x_c, y_{c}$\\[1ex]
  \hline
  $\wi = 0.5$ & & \\
  \hline
  Current work $\T{}_{90}$   &5.76 & 351.21 & 0.466, 0.799\\
  Current work $\T{}_{120}$  & 5.65 & 309.65 & 0.466, 0.799 \\
  Current work $\T{}_{150}$  & 5.60 & 295.04 & 0.467, 0.799  \\
  Current work $\mathcal{R}_{256}$  & 5.59 & 290.59 & 0.467, 0.799  \\
  Pan et al. \cite{pan2009simulation}    & $5.59^{\dagger}$ & $289^*$ &
  0.469, 0.798  \\
  Sousa et al. \cite{sousa2016lid} M4  & 5.51 & - & 0.466, 0.800\\
  \hline
  $\wi = 1$ & & \\
  \hline
  Current work $\T{}_{180}$  & 10.75 & 48,038.9  & 0.428, 0.819 \\
  Current work $\mathcal{R}_{256}$  & 10.22 & 28,069.2  & 0.431, 0.819
  \\
  Pan et al. \cite{pan2009simulation}  & $9.35^{\dagger}$ & 11,529.43 & 0.439,
  0.816  \\
  Sousa et al. \cite{sousa2016lid} M4  & 7.80 & - & 0.434, 0.816\\
  \hline
\end{tabular}
\label{table:comparison_metrics}
\end{table}

\subsubsection{\(\wi = 1\)}

The solution appears to have reached a steady state by $t = 27.5$. The
conformation tensor field is plotted component-wise in Figure
\ref{fig:conformation_tensor_fields}, with benchmark metrics reported in Table
\ref{table:comparison_metrics}. The logarithm of the conformation tensor was
computed to allow comparison with other published works, with selected cross
sections presented in Figure \ref{fig:comparison_with_others}. A sharp boundary
layer at the lid is observed in $\sig_{11}$, with all components exhibiting
large gradients near the upper corners of the domain. Our computational results
agree qualitatively with those presented in \cite[Figure 4]{fattal2005time},
although in our simulations a steady state was reached much later than $t=8$, as
reported there. In addition, the maximum value $\sig_{11}$ along $x=0.5$ was
significantly overestimated in our work relative to other published data, but
there is significant variation in this figure, with \cite{pan2009simulation}
reporting a value two logarithmic orders larger than \cite{fattal2005time}.
However, it can be seen from Figure \ref{fig:comparison_with_others} that large
discrepancies occur only in a very small neighbourhood of the upper boundary.
Qualitative differences are observed only in the upper right hand corner.

\begin{figure}
  \includegraphics[width=0.33\textwidth]{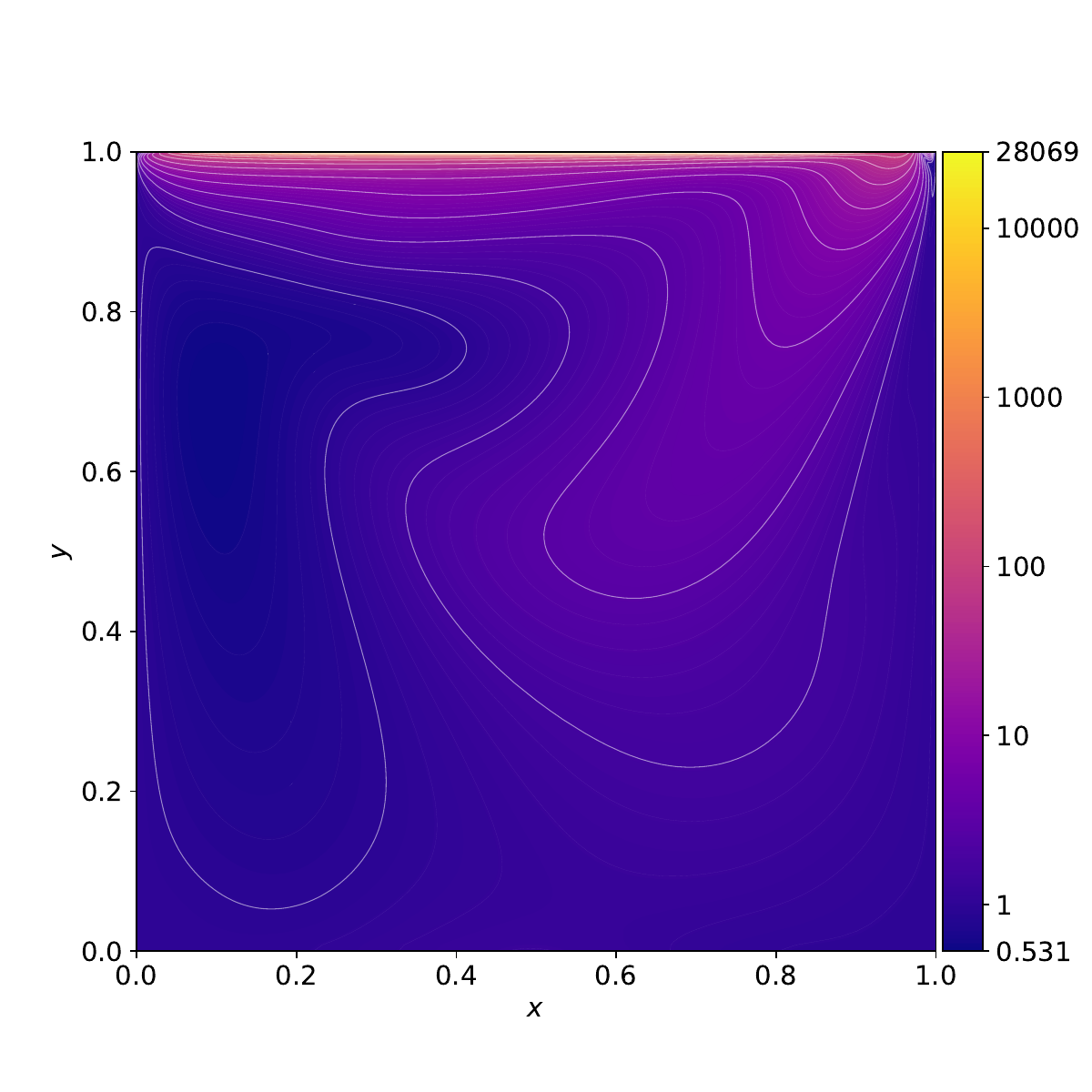}
  \includegraphics[width=0.33\textwidth]{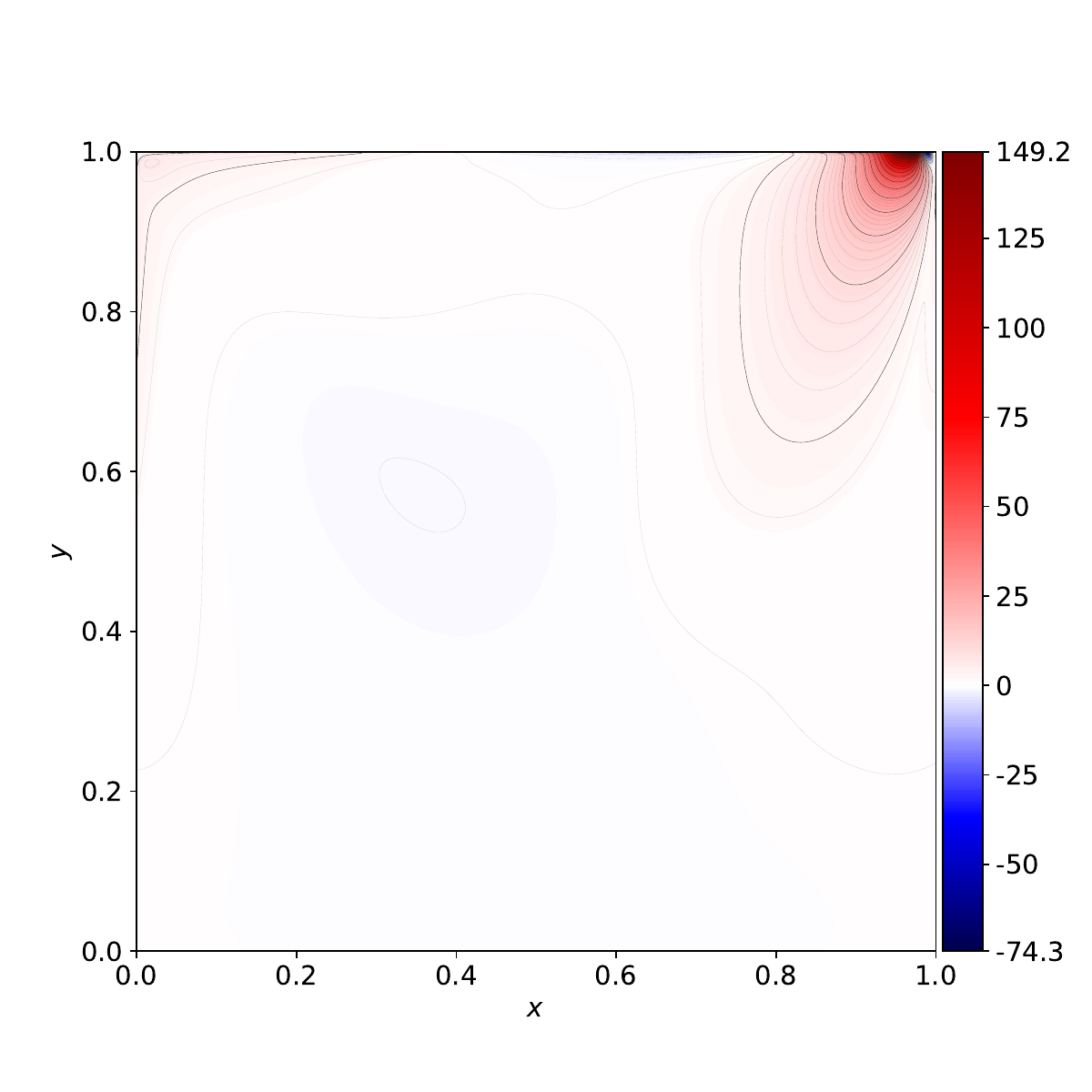} 
  \includegraphics[width=0.33\textwidth]{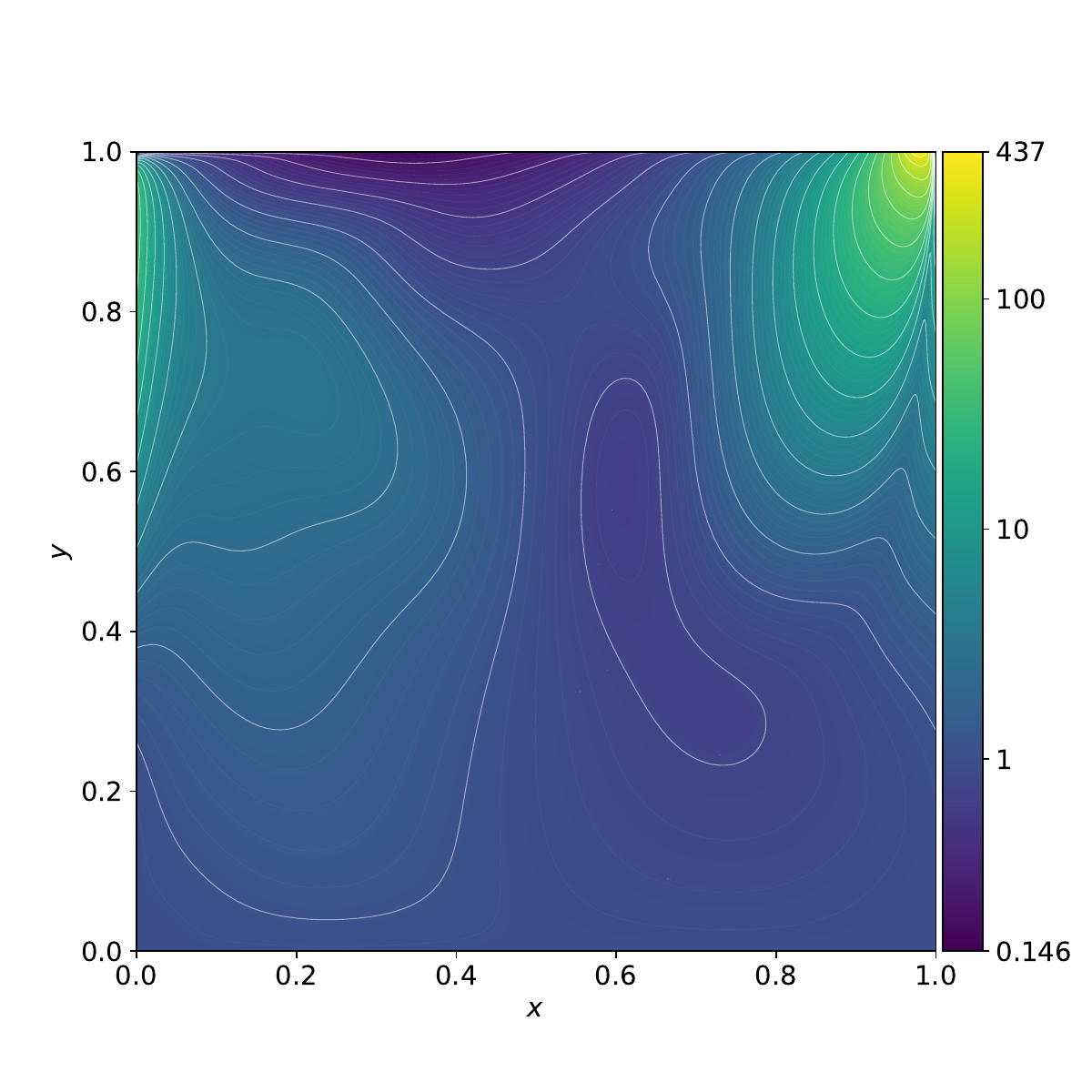}
  \caption{\label{fig:conformation_tensor_fields} $\wi = 1$. Components of the
  conformation tensor at $t=30$: $\sig_{11}$, $\sig_{12}$, $\sig_{22}$ (note $\sig_{11}$ and $\sig_{22}$ are
  logarithmically coloured and contoured while $\sig_{12}$ is linearly
  coloured and contoured). Different colormaps are used to emphasise the
  difference in magnitude between the components. Minimum and maximum values are
  shown on the colour bar.}
  \end{figure}

\begin{figure}
  \centering
  \includegraphics[width=0.4\textwidth]{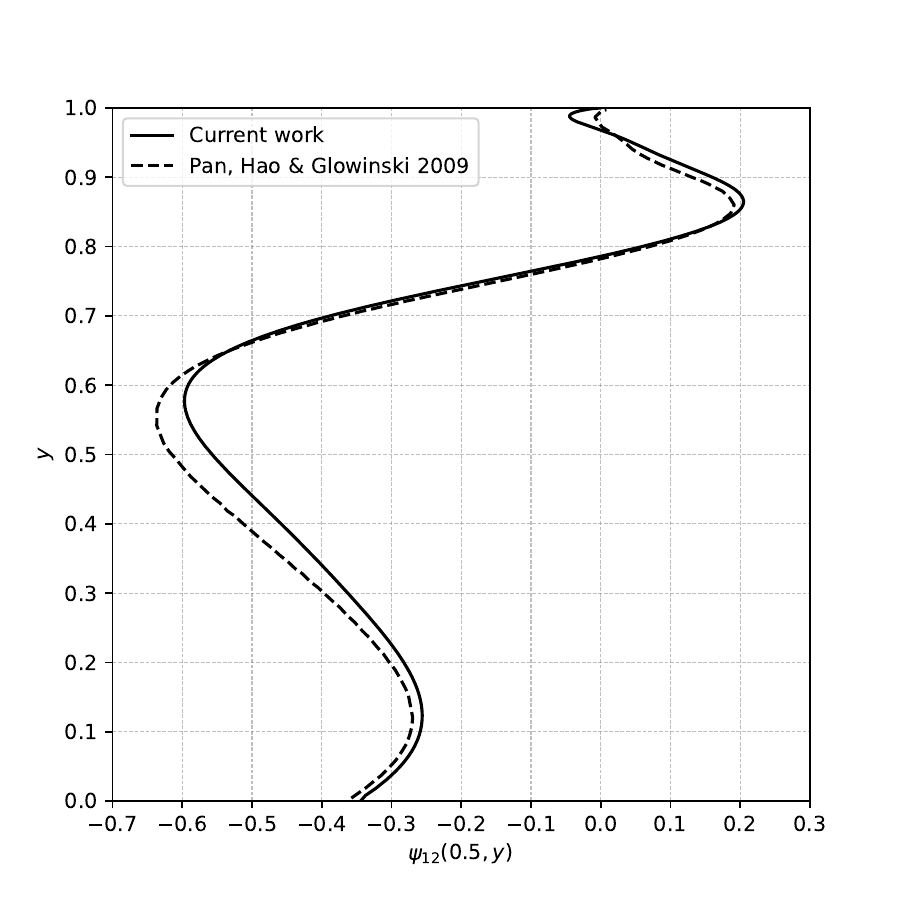}
  \includegraphics[width=0.4\textwidth]{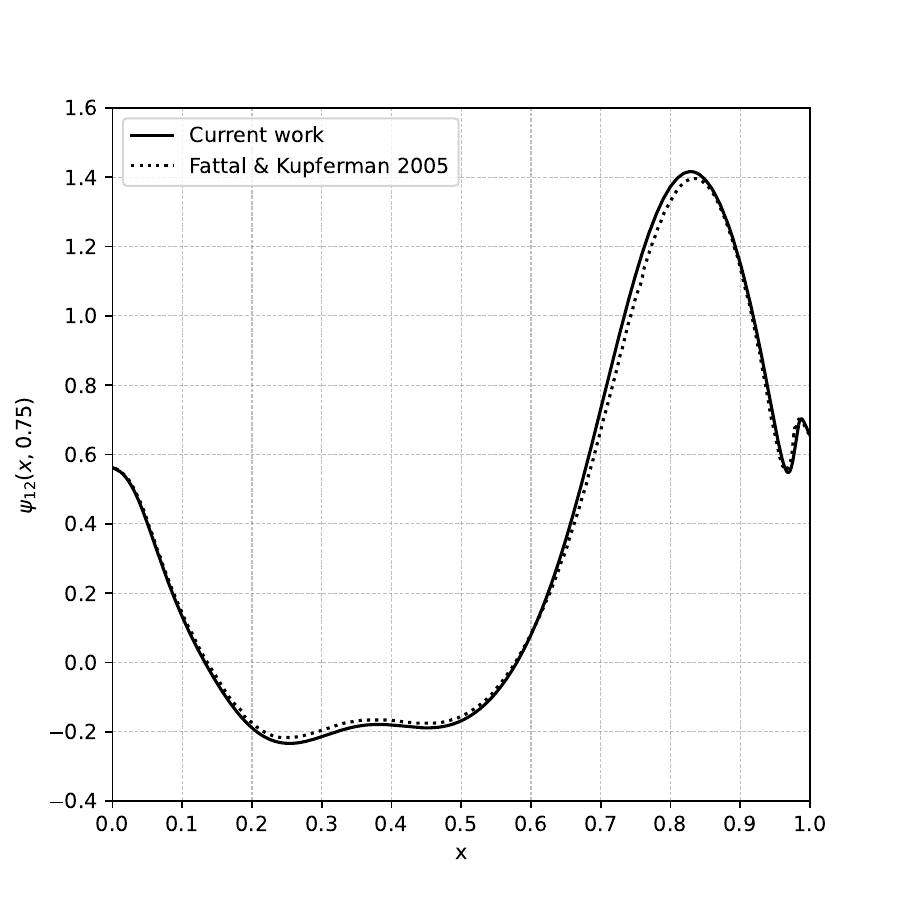}\\
  \includegraphics[width=0.4\textwidth]{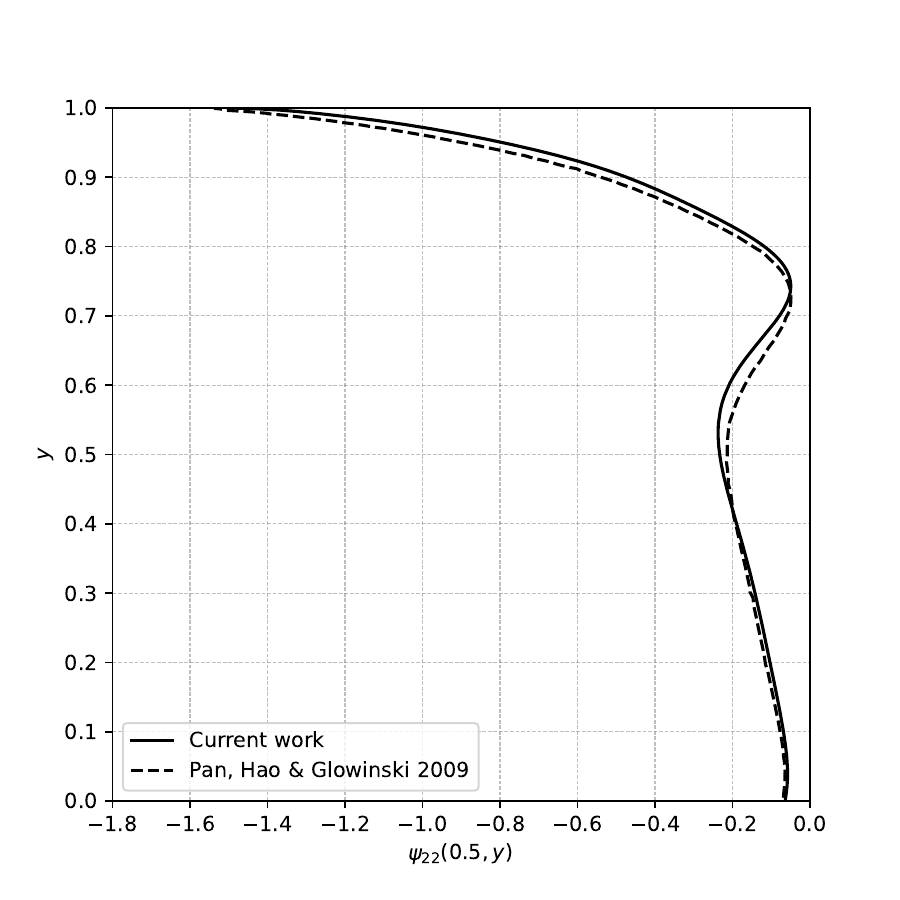} 
  \includegraphics[width=0.4\textwidth]{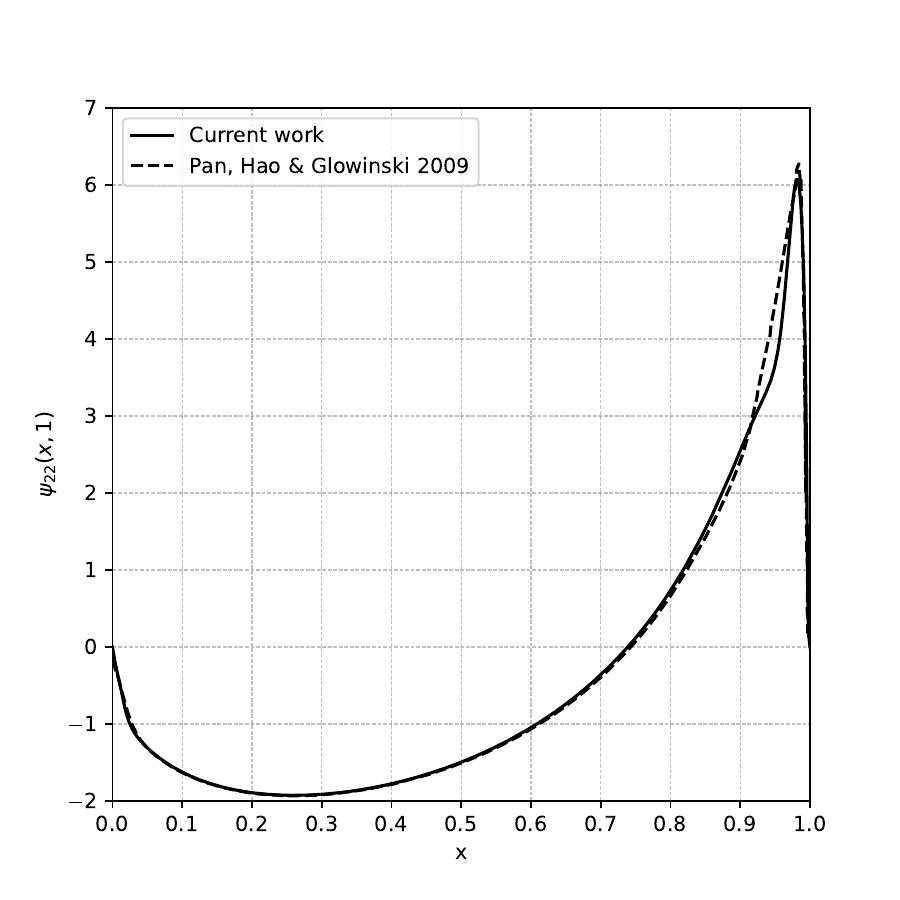}\\
  \includegraphics[width=0.4\textwidth]{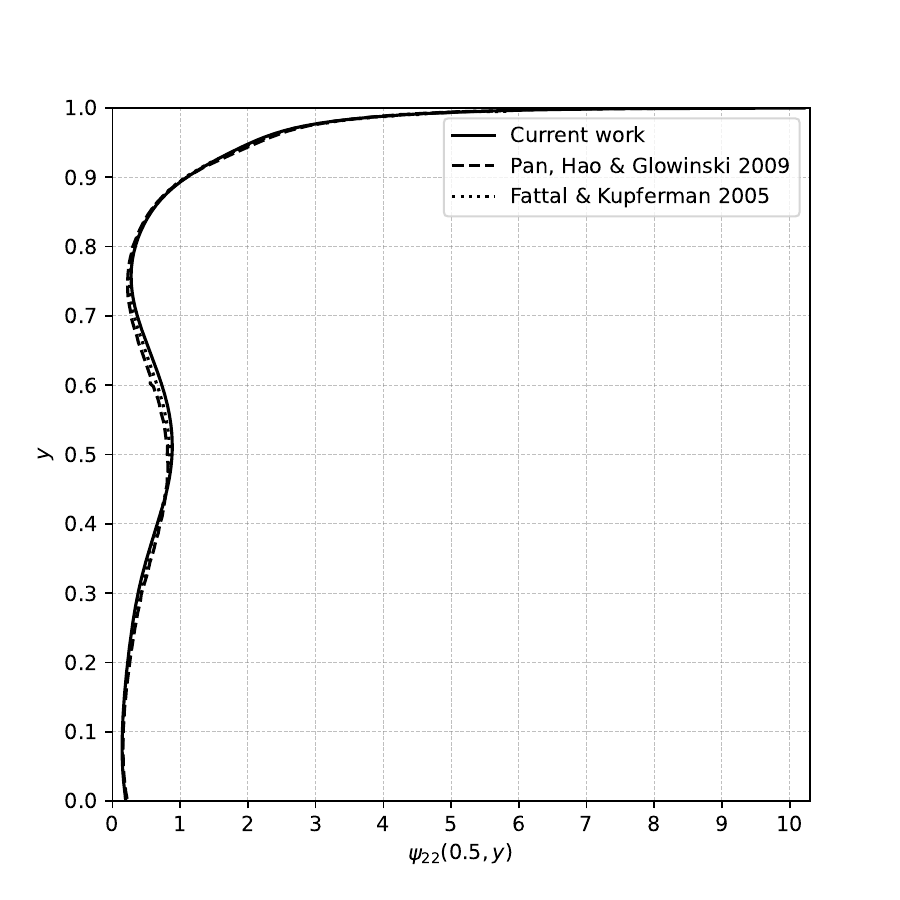}
  \includegraphics[width=0.4\textwidth]{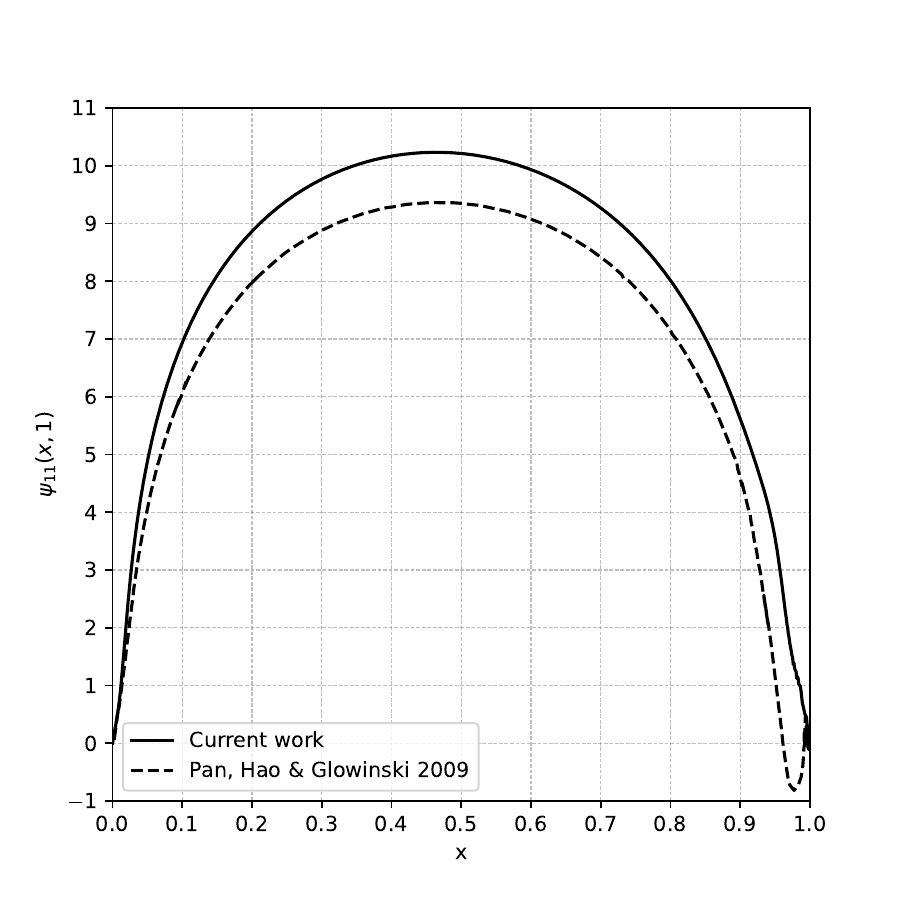}
  \caption{\label{fig:comparison_with_others} Components of the logarithm of the
  conformation tensor plotted along cross sections of the domain obtained on the
  mesh $\mathcal{R}_{256}$ with $\wi = 1$ (solid line). Dotted line is numerical
  data from Fattal \& Kupferman (2005) \cite{fattal2005time}, while the dashed
  line is data from Pan, Hao \& Glowinski (2009) \cite{pan2009simulation}.}
\end{figure}

In Figure \ref{fig:cross_sections_comparison}, components of the conformation
tensor are plotted along cross sections for two Weissenberg numbers, 0.5 and 1,
to illustrate the difference in solutions. Larger magnitudes and exaggerated
features are observed in all components for $\wi = 1$ when compared with $\wi =
0.5$.

\begin{figure}
  \includegraphics[width=0.33\textwidth]{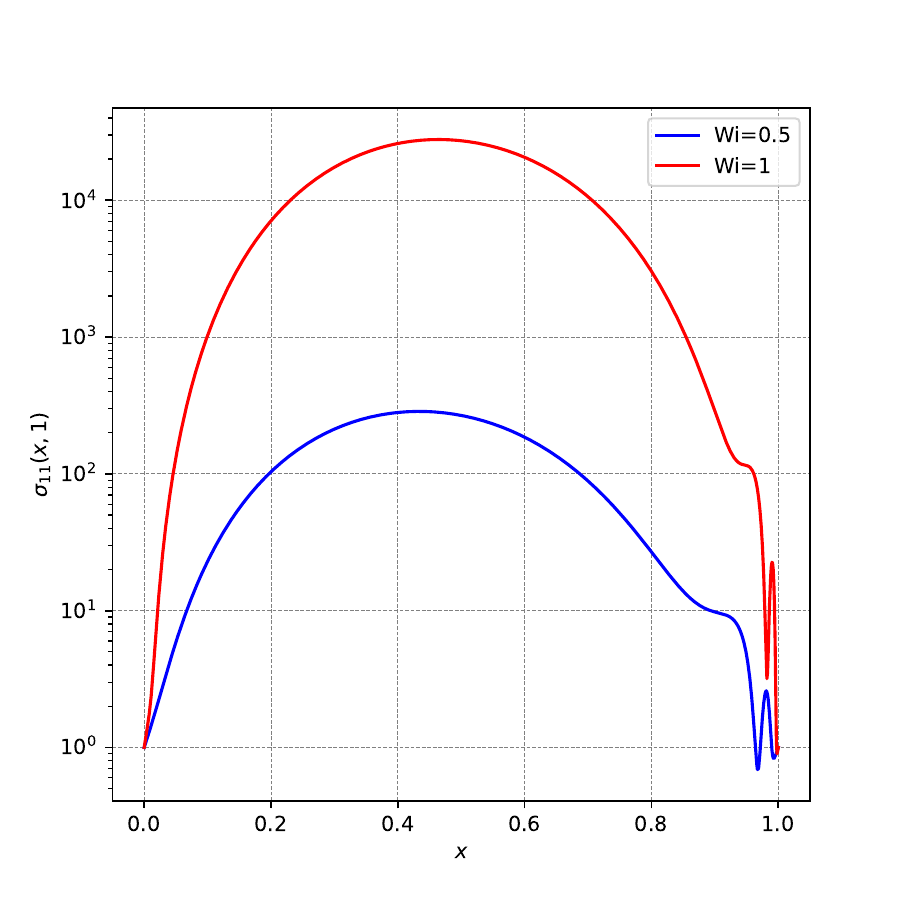}
  \includegraphics[width=0.33\textwidth]{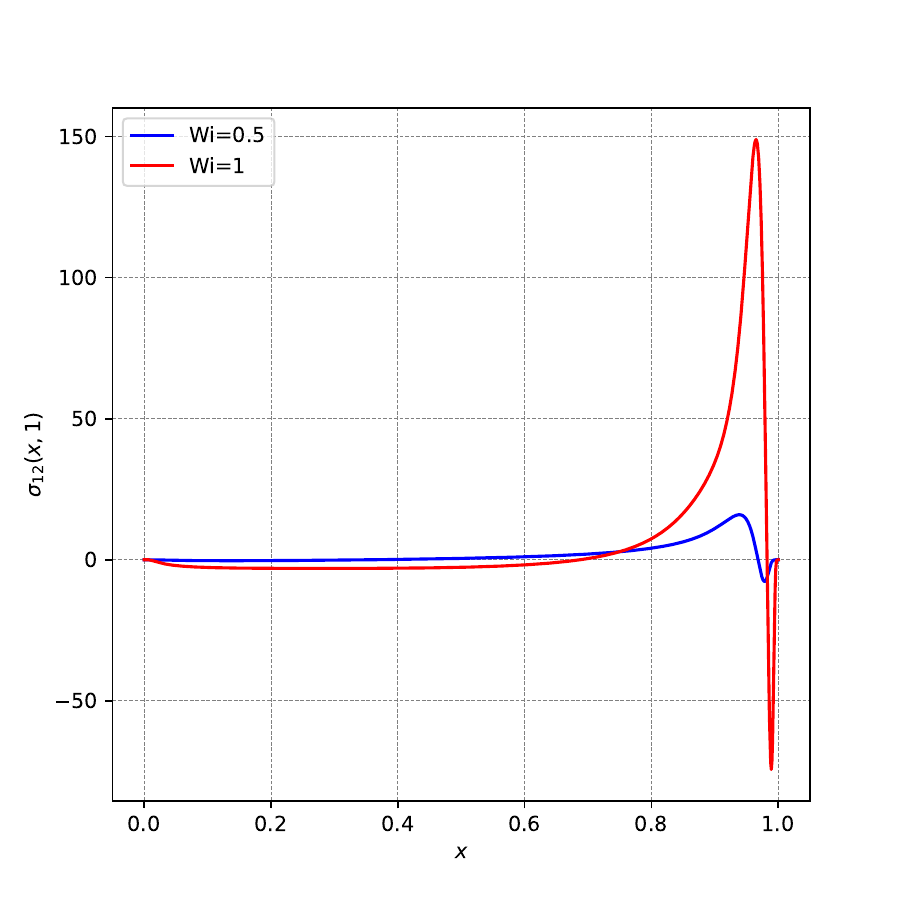} 
  \includegraphics[width=0.33\textwidth]{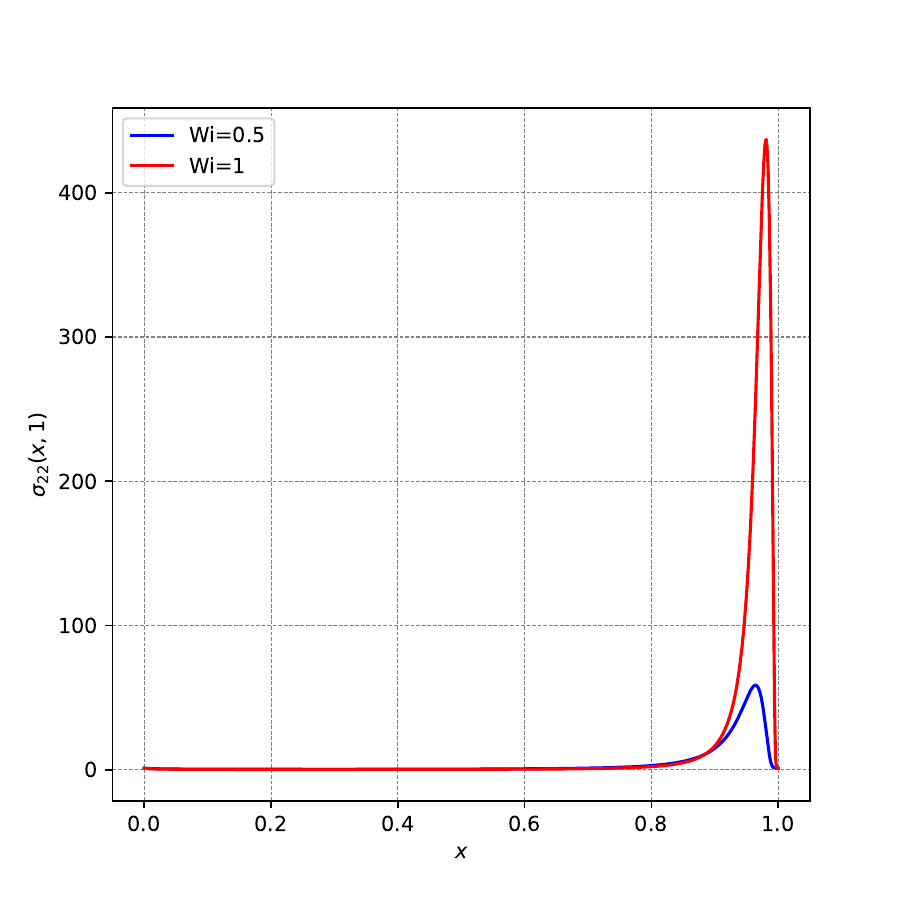} \\
  \includegraphics[width=0.33\textwidth]{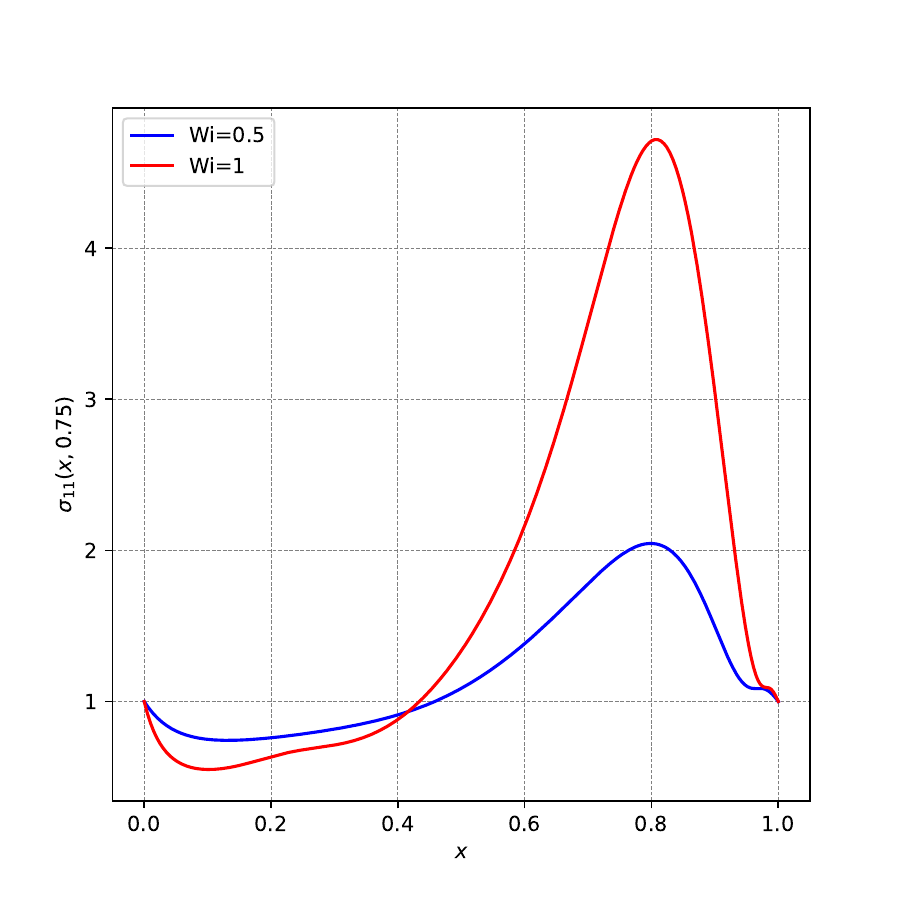}
  \includegraphics[width=0.33\textwidth]{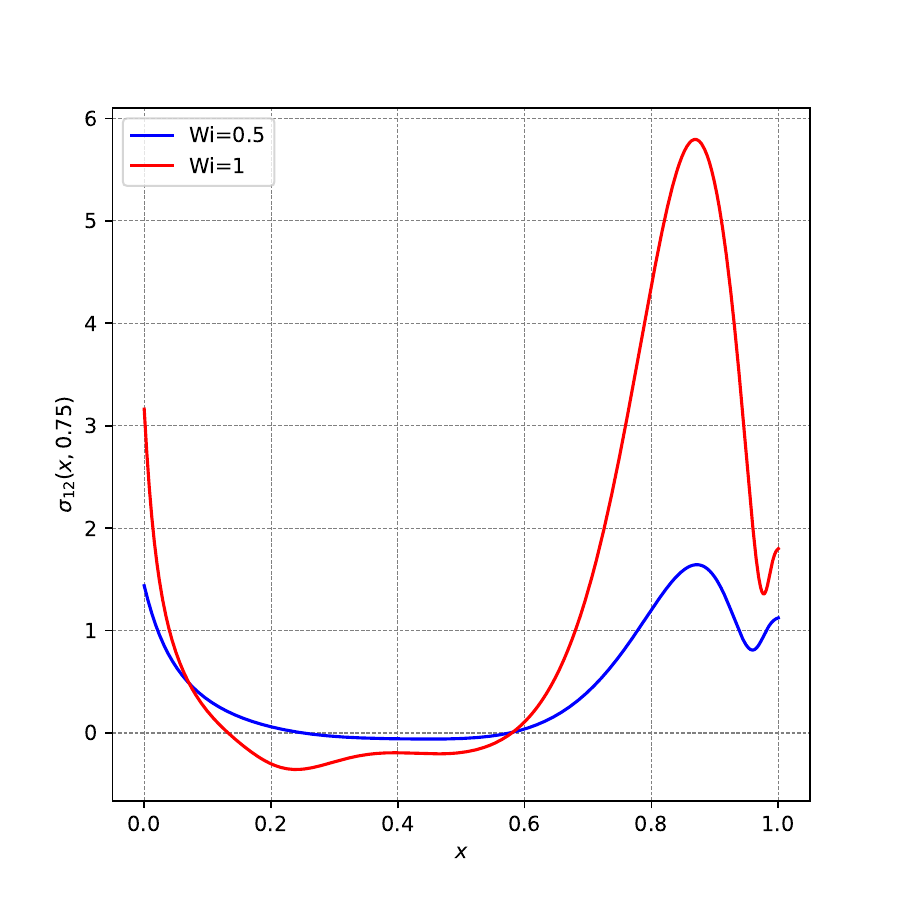} 
  \includegraphics[width=0.33\textwidth]{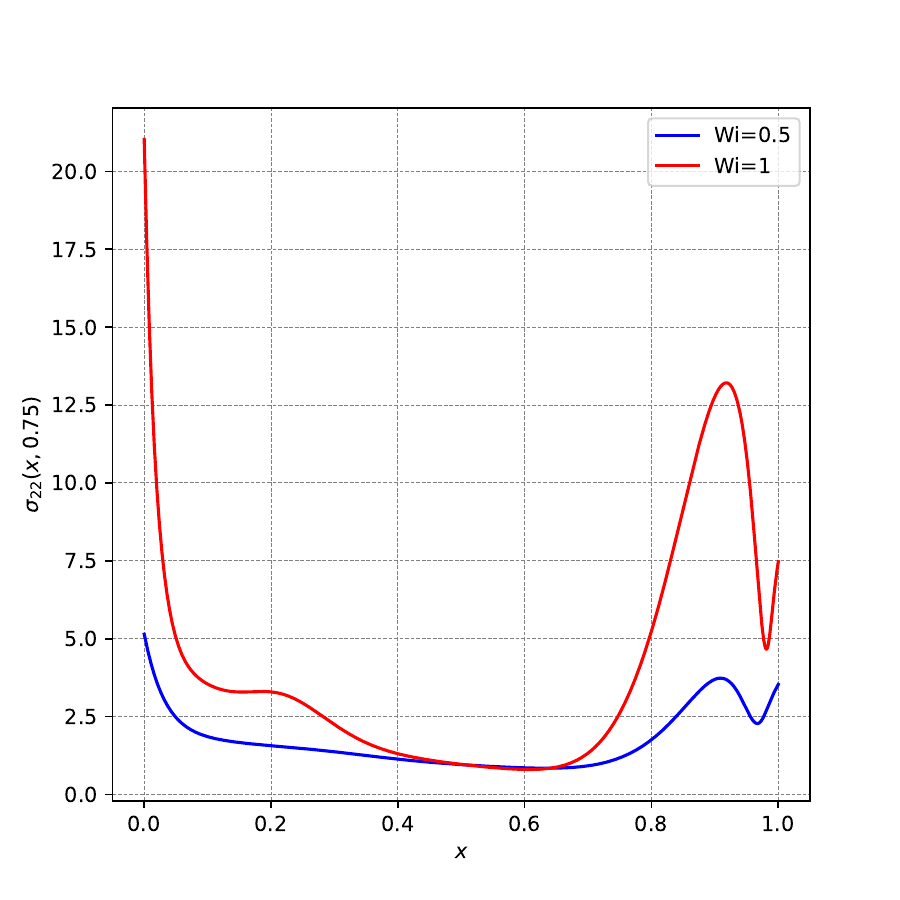} \\
  \includegraphics[width=0.33\textwidth]{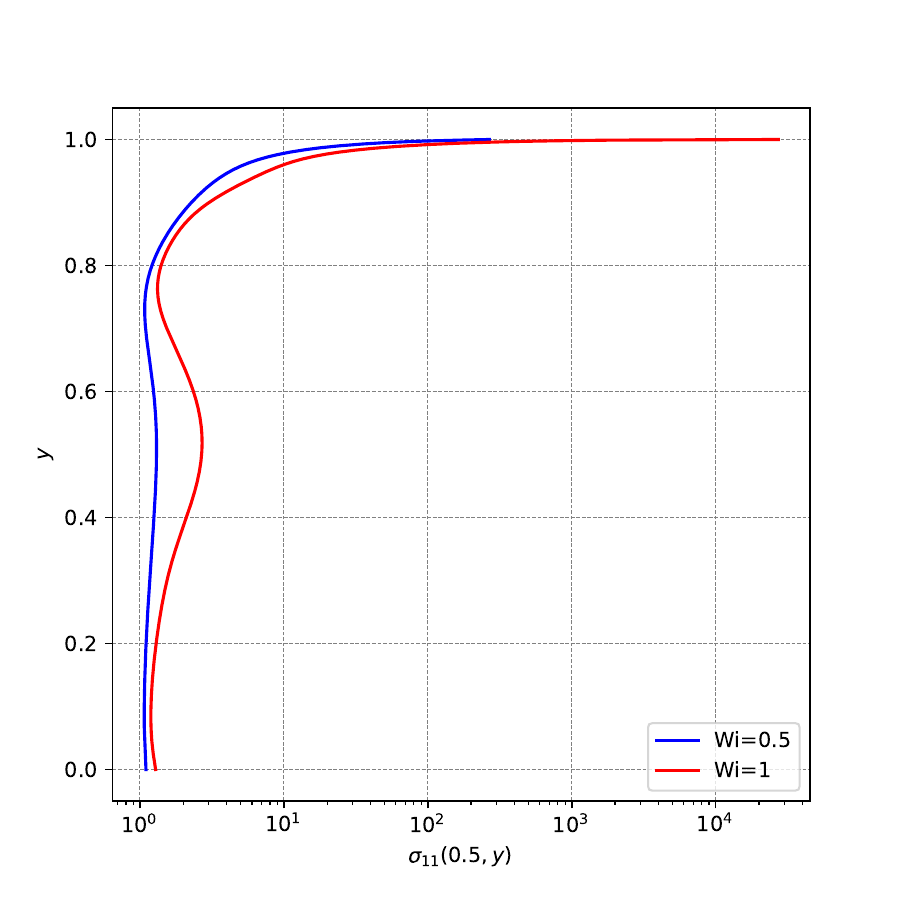}
  \includegraphics[width=0.33\textwidth]{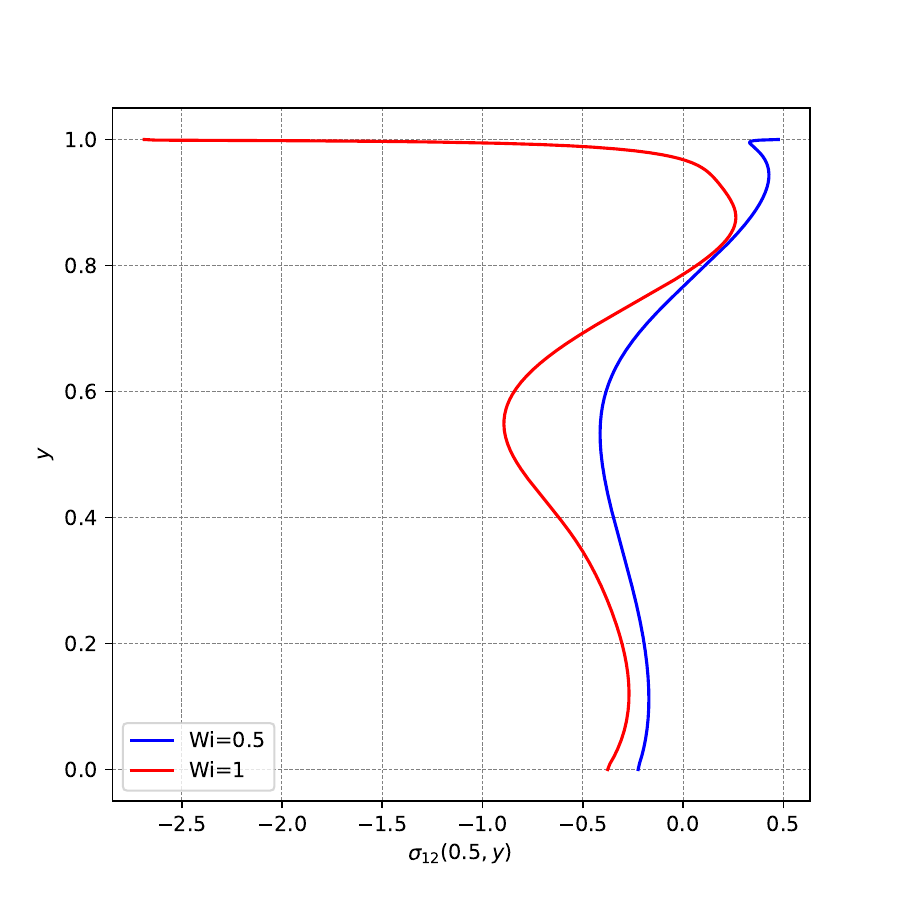} 
  \includegraphics[width=0.33\textwidth]{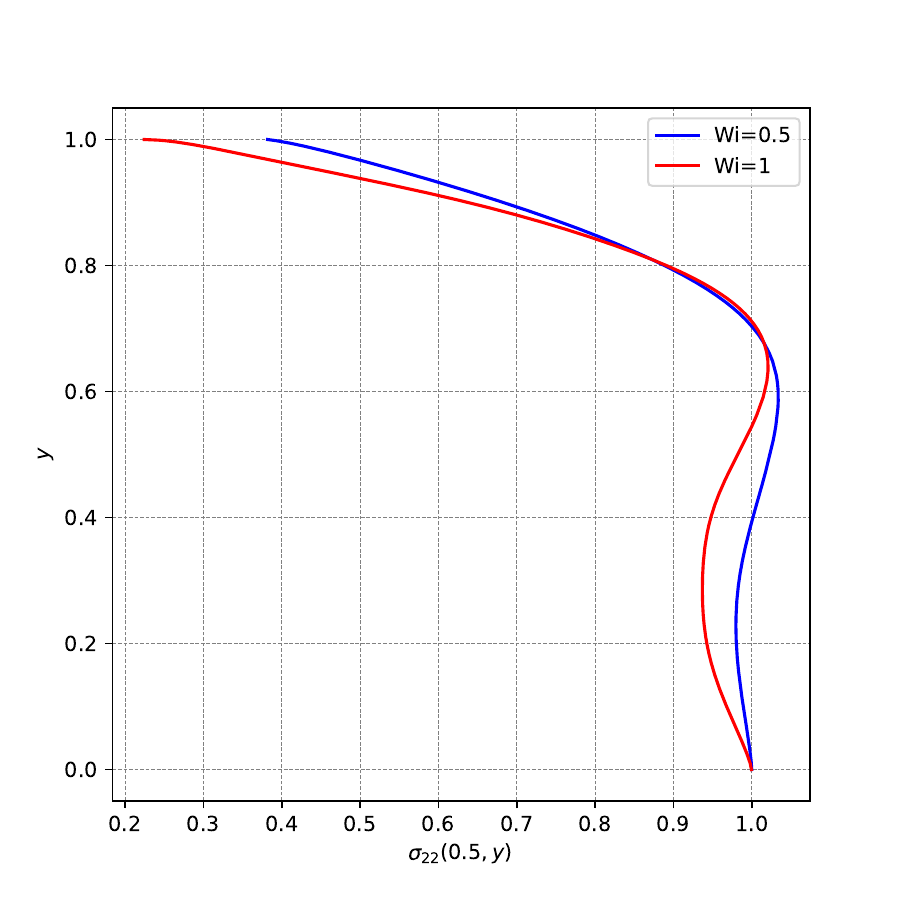}
  \caption{\label{fig:cross_sections_comparison} Components of the conformation
  tensor plotted along cross sections of the domain obtained on the mesh
  $\mathcal{R}_{256}$ with $\wi = 0.5, 1$. Top row: plots over the top boundary
  given by $y = 1$. Middle row: plots over the line given by $y=0.75$. Bottom
  row: plots over the midline given by $x=0.5$. }
\end{figure}

\subsection{Positive definiteness of the discrete conformation tensor}

A major aim of the numerical scheme presented is to maintain a positive definite
comformation tensor, both to respect the physics of the problem and avoid
numerical instabilities that can arise when this property is lost. We recall
from Proposition \ref{prop:pos_def_disc} that this is guaranteed if the time
step is chosen to be sufficiently small. To ensure that an appropriate choice
was made, the eigenvalues of the discrete conformation tensor were computed, and
indeed at all times, the smallest eigenvalue of the conformation tensor was
strictly positive, and therefore the discrete conformation tensor remained
positive definite throughout the computation. The eigenvalues of the
conformation tensor are visualised in Figure \ref{fig:eigenvalues}.

\begin{figure}
  \centering
  \includegraphics[width=0.33\textwidth]{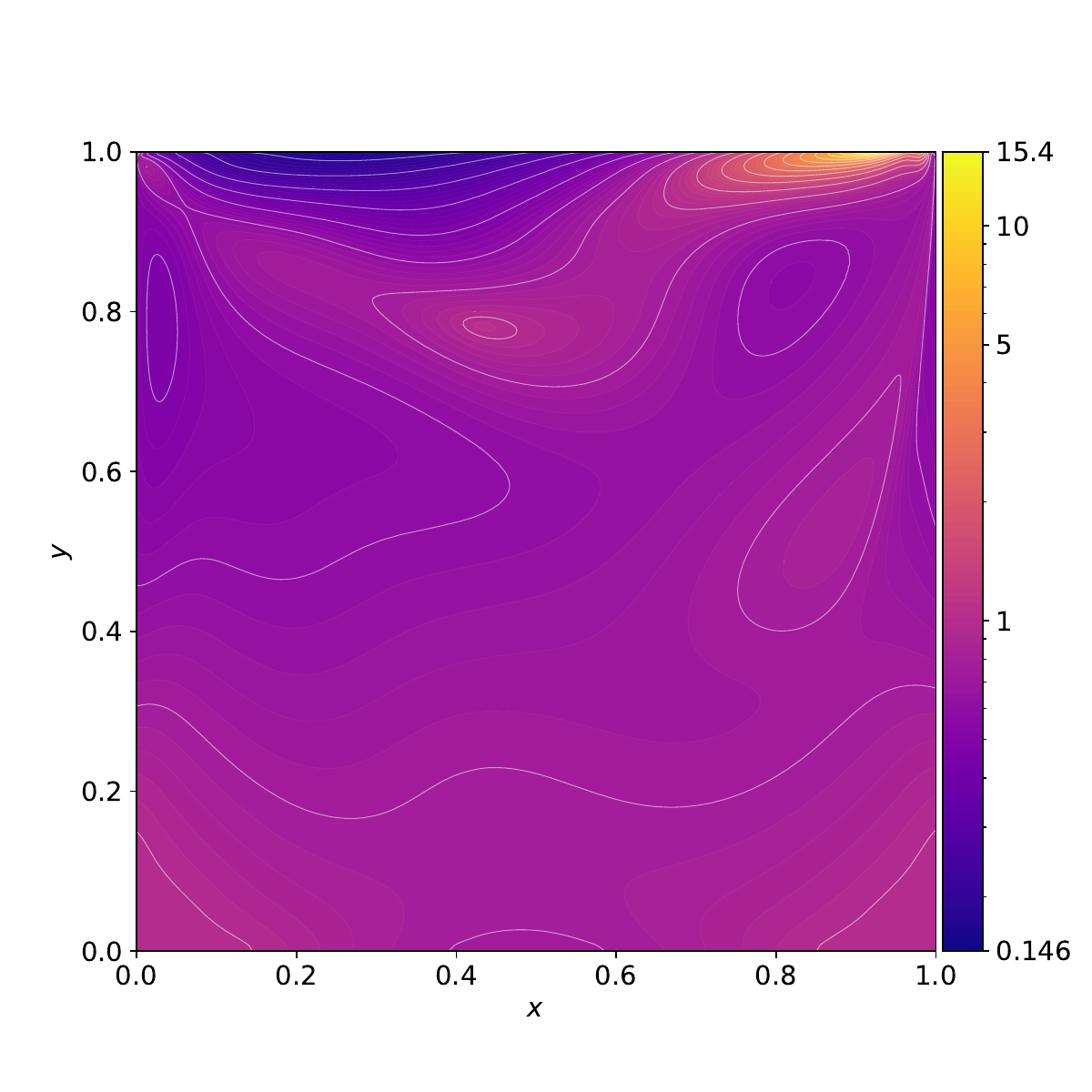}
  \includegraphics[width=0.33\textwidth]{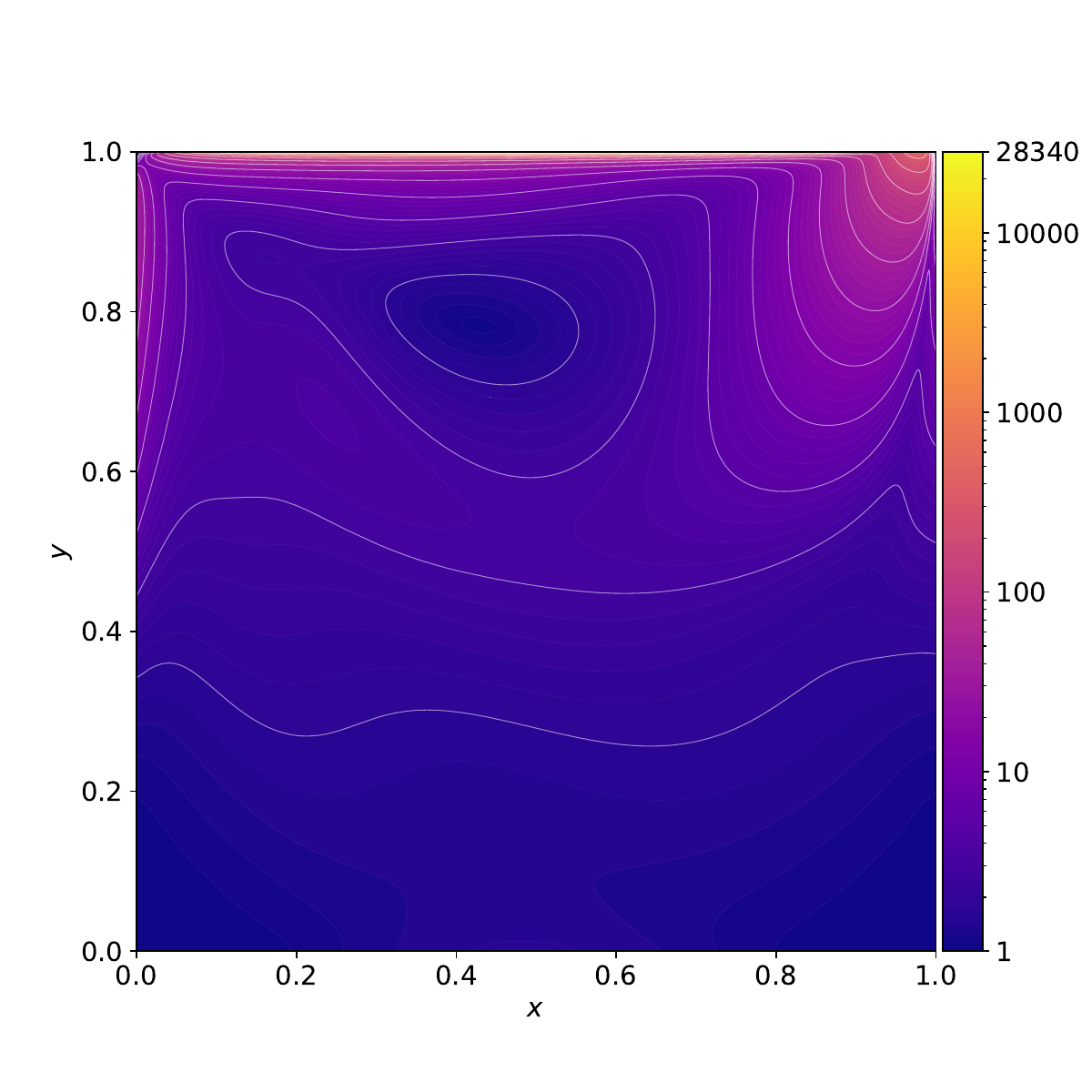} 
  \caption{\label{fig:eigenvalues} Left: Smallest eigenvalue of the discrete
  conformation tensor. Right: largest eigenvalue. Both plots are logarithmically
  coloured and contoured. Minimum and maximum values are shown on the colour
  bar.}
  \end{figure}

\section{Conclusions \& Discussion}\label{sec:discuss}

Results from simulations of an Oldroyd-B fluid in the creeping flow regime using
a discretisation of the Lie derivative \eqref{eq:Lie_derivative_def_1} were
presented. The scheme maintained positive definiteness of the discrete
conformation tensor and achieved good agreement with existing published data.
Our proposed finite difference method circumvents quadrature issues,
representing advection effectively compared to some finite element schemes,
which can under-represent advection for small time steps or coarse quadrature
formulas \cite{boyaval2009free}.

Qualitatively and quantitatively, the results are consistent with those in the
literature using the log-conformation representation
\cite{fattal2005time,pan2009simulation,sousa2016lid}. The scheme's update
process for the conformation tensor is efficient and straightforward to
implement. However, spatial resolution and mesh design significantly influence
the results, requiring fine meshes for convergence.

Further work includes exploring enhancements to preserve additional
fluid structures, particularly incompressibility constraints and
investigating differential equations on the special linear group for
a consistent discrete deformation gradient. This may involve
combining the approach with divergence-free finite element methods
for fluid equations or exploring finer meshes or different finite
difference points for the constitutive law to address challenges in
the creeping flow regime.  

\section{Acknowledgements}

This work has been partially supported by the Leverhulme Trust
Research Project Grant No. RPG-2021-238.  TP is also partially
supported by EPRSC grants
\href{https://gow.epsrc.ukri.org/NGBOViewGrant.aspx?GrantRef=EP/W026899/2}
     {EP/W026899/2},
     \href{https://gow.epsrc.ukri.org/NGBOViewGrant.aspx?GrantRef=EP/X017206/1}
          {EP/X017206/1} and
          \href{https://gow.epsrc.ukri.org/NGBOViewGrant.aspx?GrantRef=EP/X030067/1}
               {EP/X030067/1}. The authors also want to thank Gabriel
               Barrenechea and Emmanuil Geourgoulis for helpful
               discussions and suggestions.

\printbibliography
\end{document}